\def\presuper#1#2%
\def\Z{\mathbb{Z}}
\def\F{\mathrm{F}}
\def\ind{\mathrm{ind}}
\def\W{\mathrm{W}}
\def\G{\mathrm{G}}
\def\R{\mathrm{R}}
\def\diag{\mathrm{diag}}
\theoremstyle{claim}
\newtheorem{thm}{Theorem}[section]
\newtheorem{proposition}[thm]{Proposition}
\newtheorem{lemma}[thm]{Lemma}
\newtheorem{cor}[thm]{Corollary}
\newtheorem{definition}[thm]{Definition}
\newtheorem{remark}[thm]{Remark}
\newtheorem*{propsccond}{{Proposition \ref{Scconductor}}}
\newtheorem*{theorem}{Theorem}
\newcommand{\GL}{\mathrm{GL}}
\newcommand{\U}{\mathrm{U}}
\newcommand{\St}{\mathrm{St}}
\newcommand{\End}{\mathrm{End}}
\newcommand{\Ql}{\overline{\mathbb{Q}_\ell}}
\newcommand{\Fl}{\overline{\mathbb{F}_\ell}}
\newcommand{\Res}{\mathbf{Res}}
\newcommand{\Ind}{\mathrm{Ind}}
\newcommand{\Hom}{\mathbf{Hom}}
\def\P{\mathrm{P}}
\def\N{\mathrm{N}}
\def\M{\mathrm{M}}
\tikzset{
    >=stealth',
    punkt/.style={
           rectangle,
           rounded corners,
           draw=black, very thick,
           text width=11em,
           minimum height=2em,
           text centered},
    punkt1/.style={
           rectangle,
           rounded corners,
           draw=black, 
           text width=11em,
           minimum height=2em,
           text centered},
    pil/.style={
           ->,
           thick,
           shorten <=2pt,
           shorten >=2pt,}
}
\tikzstyle{every picture}+=[remember picture]
\date{}
\def\U{\mathrm{U}}
\def\Hom{\mathrm{Hom}}
\def\End{\mathrm{End}}
\def\Res{\mathrm{Res}}
\def\ind{\mathrm{ind}}
\def\Ql{\overline{\mathbb{Q}_\ell}}
\def\St{\mathrm{St}}
\def\V{\mathrm{V}}
\def\A{\mathrm{A}}
\def\B{\mathrm{B}}
\def\F{\mathrm{F}}
\def\E{\mathrm{E}}
\def\G{\mathrm{G}}
\def\H{\mathrm{H}}
\def\J{\mathrm{J}}
\def\K{\mathrm{K}}
\def\M{\mathrm{M}}
\def\P{\mathrm{P}}
\def\R{\mathrm{R}}
\def\S{\mathrm{S}}
\def\T{\mathrm{T}}
\def\U{\mathrm{U}}
\def\W{\mathrm{W}}
\def\Z{\mathrm{Z}}
\def\Ind{\mathrm{Ind}}
\def\op{\mathrm{op}}
\newcommand{\margh}[1]{}
\def\Fl{\overline{\mathbb{F}_\ell}}
\def\Zl{\overline{\mathbb{Z}_\ell}}
\def\GL{\mathrm{GL}}
\def\N{\mathrm{N}}
\def\new{\mathrm{new}}
	\title{Newforms in cuspidal representations}
	\author{Johannes Girsch}
	\address{Johannes Girsch, School of Mathematics and Statistics, University of Sheffield, Sheffield, S3 7RH, United Kingdom.}
\email{j.girsch@sheffield.ac.uk}
	\author{Robert Kurinczuk}
	\address{Robert Kurinczuk, School of Mathematics and Statistics, University of Sheffield, Sheffield, S3 7RH, United Kingdom.}
\email{robkurinczuk@gmail.com}
\begin{document} 
	
	\maketitle
\begin{abstract}
We consider newform vectors in cuspidal representations of $p$-adic general linear groups.  We extend the theory from the complex setting to include~$\ell$-modular representations with~$\ell\neq p$, and prove that the conductor is compatible with congruences modulo~$\ell$ for (ramified) supercuspidal~$\ell$-modular representations and for depth zero cuspidals. In the complex and modular setting, we prove explicit formulae for depth zero and minimax cuspidal representations of integral depth, in Bushnell-Kutzko and Whittaker models.
\end{abstract}
	\setcounter{tocdepth}{1}
	\tableofcontents
	\section{Introduction}

\subsubsection*{Newforms}	
Casselman \cite{Casselman} adelized the classical theory of newforms, translating them into the language of automorphic representations for~$\GL(2)$.  Jacquet--Piatetski-Shapiro--Shalika \cite{JPSS, Jacquet} and Matringe \cite{Matringe} generalized this picture to~$\GL(n)$, showing the existence and uniqueness of newforms for generic~$\mathbb{C}$-representations of~$\G_n=\GL_n(\F)$, where~$\F$ is a non-archimedean local field of residue characteristic~$p$.  (For archimedean newform theory see \cite{humphries2024archimedean}.) Jacquet--Piatetski-Shapiro--Shalika and Matringe consider a decreasing family of compact open subgroups $(\K_n(m))_{m\in\mathbb{N}}$ of~$\G_n$, whose intersection consists of matrices in~$\GL_n(\mathfrak{o}_\F)$ with final row~$\begin{pmatrix}0&\cdots&0&1\end{pmatrix}$, where~$\mathfrak{o}_\F$ denotes the ring of integers of~$\F$.    Amazingly, it turns out that for~$\pi$ a ramified generic~$\mathbb{C}$-representation of~$\G_n$, there exists a (unique) positive integer~$c(\pi)$ such that
\begin{equation*}
\pi^{\K_n(m)}\simeq\begin{cases}
\mathbb{C}&\text{if } m= c(\pi);\\
0&\text{if }m<c(\pi).\end{cases}\end{equation*}
A non-zero vector of~$\pi^{\K_n(c(\pi))}$ is called a \emph{newform}.   Locally, newforms and their realizations in Whittaker models have proven useful as test vectors; see, for example, \cite{JPSS}, \cite{MR3719298}, \cite{MR4553601}, \cite{MR4224513}.

\subsubsection*{Cuspidal representations}
Let~$\R$ now denote an algebraically closed field of characteristic~$\ell\neq p$.  Harish-Chandra's approach to classifying the irreducible~$\R$-representations begins by considering the \emph{cuspidal~$\R$-representations} (resp.~\emph{supercuspidal} $\R$-representations) that is the irreducible~$\R$-representations which do not appear as a quotient (resp.~subquotient) of an~$\R$-representation parabolically induced from an irreducible~$\R$-representation of a proper parabolic subgroup.  Suprisingly, by work of Bushnell--Kutzko \cite{BK93} extended to the modular setting by Vign\'eras \cite{Vig96}, it turns out that all cuspidal~$\R$-representations of~$\G_n$ are compactly induced: for every cuspidal~$\R$-representation~$\pi$ of~$\G_n$ there exists an (explicitly constructed) pair~$(\mathbf{J},{\boldsymbol\lambda})$, consisting of a compact-mod-centre subgroup~$\mathbf{J}$ of~$\G_n$ and an irreducible representation~${\boldsymbol\lambda}$ of~$\mathbf{J}$, in Bushnell--Kutzko's list such that~$\pi\simeq \ind_{\mathbf{J}}^{\G_n}({\boldsymbol\lambda})$.  

Paskunas--Stevens \cite{PaskSte} use Bushnell and Kutzko's models to construct explicit Whittaker functions with small support in the Whittaker model of a cuspidal representation of~$\G_n$.  These functions have also proved useful as test vectors; see for example \cite{MR3901635}, \cite{AKMSS}.

\subsubsection*{}
These two pictures lead to the following natural questions which we discuss in the paper:
\begin{enumerate}
\item\label{Q1} Describe the (unique up to scalar) newform in the Bushnell--Kutzko model of a cuspidal~$\mathbb{C}$-representation of~$\G_n$.
\item\label{Q2} Compare the local test vectors arising from the globally motivated newform theory and the locally motivated Bushnell--Kutzko theory.
\item\label{Q3} Does the theory of newforms extend to~$\R$-representations?
\end{enumerate}

\subsubsection*{Our results}
We approach these questions explicitly in the special cases of \emph{depth zero} and \emph{minimax} cuspidal representations.  Note that this infinite family is reasonably broad, for example every cuspidal representation~$\pi$ of~$\G_k$ for~$k$ prime which is \emph{twist minimal} (i.e., it is of minimal depth among the family of representations obtained from~$\pi$ by twisting by a character) is either minimax or depth zero.

For question \eqref{Q3}, the existence of newforms in a cuspidal~$\R$-representation~$\pi$ follows by reduction modulo~$\ell$ (note that for a cuspidal~$\R$-representation over an algebraically closed field of characteristic~$\ell$, there exists an unramified twist of it defined over~$\Fl$).  By a lifting argument we first show: 
\begin{propsccond}
Let~$\pi$ be an integral supercuspidal~$\Ql$-representation of $G_n$ for $n\geqslant 2$, which has supercuspidal reduction modulo~$\ell$, then we have an equality of \emph{conductors}~$c(\pi)=c(r_{\ell}(\pi))$. 

\end{propsccond}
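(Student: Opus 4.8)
The plan is to compare newform vectors in $\pi$ and in $\bar\pi:=r_\ell(\pi)$ through an integral model. Fix a $\G_n$-stable $\Zl$-lattice $L$ in $\pi$; since $\bar\pi$ is supercuspidal, hence irreducible, any such lattice satisfies $L\otimes_{\Zl}\Fl\cong\bar\pi$. As $\pi$ is cuspidal it is admissible, so $L^{\K_n(m)}=L\cap\pi^{\K_n(m)}$ is a free $\Zl$-module of rank $\dim_{\Ql}\pi^{\K_n(m)}$. Applying the left-exact functor of $\K_n(m)$-invariants to $0\to L\xrightarrow{\,\ell\,}L\to\bar\pi\to 0$ yields an injection $L^{\K_n(m)}/\ell\hookrightarrow\bar\pi^{\K_n(m)}$ whose cokernel is the $\ell$-torsion subgroup of $H^1(\K_n(m),L)$ (the first derived functor of smooth invariants). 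Hence $\dim_{\Fl}\bar\pi^{\K_n(m)}\geqslant\dim_{\Ql}\pi^{\K_n(m)}$ for every $m$; taking $m=c(\pi)$ gives $c(\bar\pi)\leqslant c(\pi)$. Since $c(\bar\pi)\leqslant c(\pi)$, the theorem reduces to the vanishing $\bar\pi^{\K_n(m)}=0$ for $m<c(\pi)$ — equivalently, to the surjectivity of $L^{\K_n(m)}/\ell\to\bar\pi^{\K_n(m)}$, i.e. the vanishing of the relevant $H^1$-torsion, in that range.

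The value one is aiming for is a ramification invariant, and this is where ``lifting'' enters. By the local Langlands correspondence for $\GL_n$ and the conductor formula of Jacquet--Piatetski-Shapiro--Shalika and Matringe, $c(\pi)=a(\mathrm{rec}(\pi))$; since $n\geqslant 2$ the irreducible $n$-dimensional representation $\mathrm{rec}(\pi)$ is ramified, so has no inertia invariants and $a(\mathrm{rec}(\pi))=n+\mathrm{Sw}(\mathrm{rec}(\pi))$. The Swan conductor depends only on the restriction to wild inertia, a pro-$p$ group over which $\Zl$- and $\Fl$-representation theory agree because $\ell\neq p$; as $\mathrm{rec}$ is compatible with reduction modulo $\ell$ (Vign\'eras) and $\mathrm{rec}(\bar\pi)$ is again irreducible and ramified, the quantity $n+\mathrm{Sw}$ does not move, so $c(\bar\pi)$ ``should'' equal $c(\pi)$. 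Equivalently, $c(\pi)$ is the degree in $q^{-s}$ of the supercuspidal factor $\varepsilon(s,\pi,\psi)$, which is a unit times that monomial and reduces to $\varepsilon(s,\bar\pi,\psi)$.

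Upgrading this to the required vanishing is, I expect, the main obstacle. One route uses the Bushnell--Kutzko/Vign\'eras presentation $\pi\cong\ind_{\mathbf J}^{\G_n}\boldsymbol\lambda$: taking $L=\ind_{\mathbf J}^{\G_n}\Lambda$ for a $\mathbf J$-stable lattice $\Lambda$ (so $L\otimes\Fl\cong\ind_{\mathbf J}^{\G_n}\bar{\boldsymbol\lambda}$ with $\bar{\boldsymbol\lambda}$ the simple type of $\bar\pi$), Mackey theory and Shapiro's lemma express $\bar\pi^{\K_n(m)}$ and the obstruction $H^1(\K_n(m),L)[\ell]$ as direct sums over $g\in\mathbf J\backslash\G_n/\K_n(m)$ of $\bar{\boldsymbol\lambda}^{H_g}$ and of $H^1(H_g,\Lambda)[\ell]$ respectively, with $H_g=\mathbf J\cap g\K_n(m)g^{-1}$; one must then show, for $m$ below the conductor, that each $H_g$ acts on $\Lambda$ cohomologically trivially — for instance that $H_g$ lies in the pro-$p$ radical $\mathbf J^1$, where the $\beta$-extension restricts to a Heisenberg representation and $H^1(H_g,\Lambda)$ carries no $\ell$-torsion, or acts through a finite quotient of order prime to $\ell$ (using that supercuspidality of $\bar\pi$ forces the level-zero part of the type to be supercuspidal). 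Quantifying how the simple stratum defining $\mathbf J$ meets the filtration $(\K_n(m))_m$ is the real content, and is the kind of explicit analysis carried out for depth zero and minimax representations elsewhere in the paper. A second route argues with $\ell$-modular Rankin--Selberg integrals: a $\K_n(m)$-fixed vector in $\bar\pi$ with $m<c(\pi)$ would, as in the complex theory, serve as a test vector producing a nonzero element of $\Fl[q^{\pm s}]$ of bounded degree in the functional equation for $\bar\pi\times\bar\tau$ with $\bar\tau$ unramified, contradicting that $\gamma(s,\bar\pi\times\bar\tau,\psi)=r_\ell\,\gamma(s,\pi\times\tau,\psi)$ still has the degree computed over $\Ql$; the delicate point there is the requisite test-vector non-vanishing modulo $\ell$.
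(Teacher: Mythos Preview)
Your proposal correctly isolates the easy inequality $c(\bar\pi)\leqslant c(\pi)$ and correctly identifies what remains, but none of your three routes closes the gap.  The Galois/$\varepsilon$-factor route is circular as stated: that the Swan conductor or the $\varepsilon$-degree is unchanged under reduction is fine, but identifying that number with the \emph{newform} conductor $c(\bar\pi)$ over $\Fl$ is precisely what is at stake --- in the paper this identification is deduced \emph{from} the proposition (see the lemma following Proposition~\ref{JPSSepsilon}), so it cannot be used as input.  Your cohomological route is framed correctly --- for $m<c(\pi)$ one has $\bar\pi^{\K_n(m)}\simeq H^1(\K_n(m),L)[\ell]\simeq\bigoplus_g H^1(H_g,\Lambda)[\ell]$ --- but the proposed mechanism for the vanishing fails: the groups $H_g=\mathbf J\cap g\K_n(m)g^{-1}$ are not contained in the pro-$p$ radical $\mathbf J^1$ in general (already in depth zero, for the relevant cosets the image of $H_g$ in $\GL_n(k_\F)$ is a full Borel $\B_{n-1}^{\op}$, whose order can be divisible by $\ell$), nor do they act through an $\ell'$-quotient.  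You acknowledge this is ``the real content'' but do not supply it, and the Rankin--Selberg route has the analogous unproved test-vector non-vanishing.

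The paper's argument avoids all of this by reversing the direction: rather than fix a $\Ql$-lift and control $H^1$, it starts on the $\Fl$-side and \emph{lifts} a $\K_n(m)$-fixed vector to \emph{some} $\Ql$-lift.  A nonzero invariant in $\bar\pi$ gives, via Mackey, a nonzero $\Hom_{\Fl[\J\cap\K_n(m)]}(1,\kappa\otimes\tau)$ for a suitable coset, where $\tau$ is the supercuspidal level-zero piece of the type.  The key idea --- absent from your proposal --- is to replace $\tau$ by its $\Zl$-projective cover $P_\tau$: supercuspidality of $\tau$ forces $P_\tau\otimes\Fl$ to be $\tau$-isotypic, so the Hom embeds into $\Hom_{\Fl}(1,(\kappa_{\Zl}\otimes P_\tau)\otimes\Fl)$.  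Since $\kappa_{\Zl}\otimes P_\tau$ is $\ell$-torsion-free, this $\Hom$ over $\Zl$ is torsion-free, hence nonzero over $\Ql$, where $P_\tau\otimes\Ql=\bigoplus_{\widetilde\tau}\widetilde\tau$ splits over the supercuspidal lifts of $\tau$.  Some $\widetilde\tau$ therefore contributes, and reversing Mackey produces a lift $\widetilde\pi$ with $c(\widetilde\pi)\leqslant m=c(\bar\pi)$.  Since all cuspidal lifts of $\bar\pi$ share the same depth and hence the same conductor by Bushnell's formula (Remark~\ref{rkconductorsliftsequal}), the original $\pi$ has $c(\pi)\leqslant c(\bar\pi)$ as well.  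The projective-cover trick is what replaces your unproven $H^1$-vanishing.
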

We later extend this to include all integral depth zero supercuspidal~$\Ql$-representations.  For uniqueness of newforms in positive characteristic, we prove this explicitly using a Mackey theory and lifting argument for depth zero cuspidal~$\Fl$-representations and unramified minimax cuspidal~$\Fl$-representations.

Question \eqref{Q1} was first considered for cuspidal~$\mathbb{C}$-representations of~$\G_n$ of depth zero by Reeder \cite{Reeder}, where he shows in the model of a depth zero cuspidal compactly induced from~$\Z_n \K_n$, where~$\K_n=\GL_n(\mathfrak{o}_\F)$ and $\Z_n$ is the center of $G_n$, that a newform vector has support in~$\Z_n\K_n\Sigma_n\K_n(c(\pi))$, where\[\Sigma_n=\diag\left(\varpi_\F^{n-1},\varpi_\F^{n-2},\ldots, 1\right).\]
We give a new proof of Reeder's result which works more generally for depth zero cuspidal~$\R$-representations (also establishing existence and uniqueness of newforms directly for depth zero cuspidals~$\R$-representations without using Jacquet--Piatetski-Shapiro--Shalika's or Matringe's work), and use this to show the newform is an average of the Bessel vector considered by Paskunas and Stevens.  Using this expression we obtain formulae for the matrix coefficients and Whittaker functions associated to newforms in depth zero cuspidal~$\R$-representations, in particular showing they are an average of the corresponding Paskunas--Stevens vectors giving an answer to \eqref{Q2} in this case.  

For a cuspidal representation~$\tau$ of~$\GL_n(k_\F)$ we write~$B_{\tau,\overline{\psi}}$ for its associated Bessel function (see Section \ref{Section:BesselFunctions}).  Our main depth zero theorem is:

\begin{theorem}
[{Theorem \ref{depthzerocoset}, Propositions \ref{matrixcoeffsdepthzero} and \ref{depthzerowhittakernewform}}]
Let~$n\geqslant 2$, and~$\pi$ be a depth zero cuspidal~$\R$-representation containing the cuspidal~$\R$-type~$(\K_n,\tau)$.
\begin{enumerate}
\item \begin{enumerate}
\item (existence, uniqueness, and support of newforms) the conductor is given by~$c(\pi)=n$,
\[\Hom_{\R[\K_n(c(\pi))]}(1,\pi)\simeq \R,\] and the unique up to scalar (non-zero newform)~$f_{\new}\in\pi^{\K_n(n)}$ has support
\[\mathrm{supp}(f_{\new})\subseteq \Z_n\K_n\Sigma_n\K_n(n).\]
\item  (explicit formula in terms of Bessel functions) The function~$f_{\new}\in \pi^{\K_n(n)}$ is characterized by its support and
\[ f_{\new}(\Sigma_n)=\sum_{b\in \B_{n-1}^{\op}(k_\F)}b\cdot B_{\tau,\overline{\psi}}.\]
Moreover, there exists a unique~$\R$-Haar measure on~$\K_n(n)$ such that, for all~$g\in\G$, 
\[f_{\new}(g)=\int_{\K_n(n)} \mathcal{B}_{\widetilde{\tau}}(gk\Sigma_n^{-1}) dk,\]
where $\mathcal{B}_{\widetilde{\tau}}$ is the Bessel vector in $\ind_{\Z_n\K_n}^{\G_n}(\widetilde{\tau})$ (see Section \ref{Section:depthzero}). 
\end{enumerate}
\item (Depth zero matrix coefficients of newforms)
Let $y_1,\dotsc,y_r$ be a set of coset representatives of $\K_n\Z_n\backslash \K_n\Z_n\Sigma_n \K_n(n)$.  The matrix coefficient~$c_{f_{\new},f_{\new}^{\vee}}$ is non-zero, bi-$\K_n(n)$-invariant, has support $\mathop{supp}(c_{f_{\new},f_{\new}^{\vee}})\subseteq \Z_n\K_n(n)\Sigma_n^{-1}\K_n\Sigma_n\K_n(n)$ and for~$g\in \Sigma_n^{-1}\K_n\Sigma_n$ we have
	\[c_{f_\new,f_{new}^\vee}(g)=\sum_{(i,j)\in I_g}\frac{|\G(\mathbb{F}_q)|}{|\U(\mathbb{F}_q)|\dim(\tau)}\sum_{b,b'\in \B_{n-1}^{\op}(\mathbb{F}_q)}B_{\tau,\overline{\psi}}(b\Sigma_n y_jgy_i^{-1}\Sigma_n^{-1}b'),\]
	where~$I_{g}=\{(i,j)\in\mathbb Z^2\mid 1\leqslant i,j\leqslant r\text{ and }g\in y_j^{-1}\Z_n\Sigma_n^{-1}\K_n\Sigma_ny_i\}$.
\item (Depth zero Whittaker newforms) Suppose~$\psi:\F\rightarrow \mathbb{C}^\times$ has conductor~$\mathfrak{o}_\F$, denote by~$\psi$ its extension to a non-degenerate character of the upper triangular unipotent subgroup~$\U_n$ of~$\G_n$ via precomposing with $u\mapsto \sum_{i=1}^{n-1}u_{i,i+1}$, and let~$W_{\pi,\new,\psi}$ denote the Whittaker newform~of~$\pi$ (normalized at the identity).  Then 
\[\mathrm{supp}(\W_{\pi,\new,\psi})\subseteq \Z_n\U_n\Sigma_n^{-1}\K_n\Sigma_n\K_n(n),\]
and for~$g\in\G$, 
\[ \W_{\pi,\new,\psi}(g)=\int_{\K_n(n)}  \W_{\pi,\mathrm{Gel},\psi^{\Sigma_n^{-1}}}(\Sigma_n g k\Sigma_n^{-1})  dk\]
for an appropriately normalized~$\R$-Haar measure~$dk$ on~$\K_n(n)$, where~$ \W_{\pi,\mathrm{Gel},\psi^{\Sigma_n^{-1}}}$ denotes Gelfand's explicit Whittaker function for~$\pi$ (cf.~\cite{PaskSte}).  
\end{enumerate}
\end{theorem}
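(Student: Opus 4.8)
\emph{Sketch of proof.} The plan is to work throughout in the Bushnell--Kutzko model $\pi\cong\ind_{\Z_n\K_n}^{\G_n}(\widetilde\tau)$, with $\widetilde\tau$ the inflation of $\tau$ along $\K_n\twoheadrightarrow\GL_n(k_\F)$ extended to $\Z_n\K_n$ by a central character, and to deduce everything from Mackey theory relative to the Cartan decomposition. For part (1)(a), Frobenius reciprocity and Mackey's formula give
\[
\pi^{\K_n(m)}\;\cong\;\bigoplus_{g\,\in\,\Z_n\K_n\backslash\G_n/\K_n(m)}\widetilde\tau^{\,\Z_n\K_n\,\cap\,g\K_n(m)g^{-1}},
\]
with only finitely many nonzero summands. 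Writing $\G_n=\coprod_{\lambda_1\geqslant\cdots\geqslant\lambda_n=0}\Z_n\K_n\varpi^{\lambda}\K_n$ and refining each Cartan cell by $\K_n(m)$ reduces me, for each dominant $\lambda$ and each position $k\in\K_n$, to the finite group $\overline{H}_{\lambda,k,m}$, the image in $\GL_n(k_\F)$ of $\Z_n\K_n\cap\varpi^{\lambda}k\K_n(m)k^{-1}\varpi^{-\lambda}$, and to computing $\tau^{\overline{H}_{\lambda,k,m}}$. First I would show, using cuspidality of $\tau$ both in the crude form $\tau^{\N_Q}=0$ for every proper standard parabolic $Q$ (with $\N_Q$ its unipotent radical) and in the refined form $\tau|_{\P_n}\cong\ind_{\U_n}^{\P_n}\overline\psi$ for the mirabolic $\P_n$, that $\overline{H}_{\lambda,k,m}$ contains a $\GL_n(k_\F)$-conjugate of some $\N_Q$ with $Q$ proper --- hence $\tau^{\overline{H}_{\lambda,k,m}}=0$ --- in every case except the double coset of $\Sigma_n=\diag(\varpi^{n-1},\dots,\varpi,1)$ at levels $m\geqslant n$; and that for $m=n$ a direct matrix computation identifies $\Z_n\K_n\cap\Sigma_n\K_n(n)\Sigma_n^{-1}$ with a compact subgroup of $\K_n$ whose image in $\GL_n(k_\F)$ is exactly $\B_{n-1}^{\op}(k_\F)$, embedded in the top-left block. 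This yields $\pi^{\K_n(m)}=0$ for $m<n$, hence $c(\pi)=n$; it identifies the surviving double coset, hence $\mathrm{supp}(f_{\new})\subseteq\Z_n\K_n\Sigma_n\K_n(n)$; and a second Mackey computation on $\P_n$, in which the relevant double cosets in $\U_n\backslash\P_n/\B_{n-1}^{\op}(k_\F)$ collapse to the unique Weyl element sending every simple root to a positive root, gives $\dim_{\R}\Hom_{\R[\K_n(n)]}(1,\pi)=1$.

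For part (1)(b), $f_{\new}$ is then the function in $\ind_{\Z_n\K_n}^{\G_n}\widetilde\tau$ supported on $\Z_n\K_n\Sigma_n\K_n(n)$ whose value at $\Sigma_n$ generates the line $\tau^{\B_{n-1}^{\op}(k_\F)}$, and it remains only to pin down the scalar. I would check that $\sum_{b\in\B_{n-1}^{\op}(k_\F)}b\cdot B_{\tau,\overline\psi}$ is a \emph{nonzero} element of $\tau^{\B_{n-1}^{\op}(k_\F)}$ --- nonzero from $B_{\tau,\overline\psi}(1)=1$ together with the known support of the Bessel function of a cuspidal type --- and normalise $f_{\new}$ so that $f_{\new}(\Sigma_n)$ equals this averaged Bessel function. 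The integral formula I would obtain by verifying that $g\mapsto\int_{\K_n(n)}\mathcal B_{\widetilde\tau}(gk\Sigma_n^{-1})\,dk$ lies in $\pi^{\K_n(n)}$ (it is manifestly right $\K_n(n)$-invariant and left $\Z_n\K_n$-equivariant), is supported in $\Z_n\K_n\Sigma_n\K_n(n)$ --- using the known support of the Paskunas--Stevens Bessel vector $\mathcal B_{\widetilde\tau}$, which after right translation by $\K_n(n)\Sigma_n^{-1}$ collapses onto that coset --- and is nonzero; the Haar measure on $\K_n(n)$ is then fixed by matching the value at $\Sigma_n$ with the Bessel-function formula, and the uniqueness from (1)(a) forces the two functions to agree.

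For part (2), I would expand $c_{f_{\new},f_{\new}^{\vee}}(g)=\langle\pi(g)f_{\new},f_{\new}^{\vee}\rangle=\sum_{x\in\Z_n\K_n\backslash\G_n}\langle f_{\new}(xg),f_{\new}^{\vee}(x)\rangle_{\widetilde\tau}$ using the $\G_n$-invariant pairing on $\ind_{\Z_n\K_n}^{\G_n}\widetilde\tau\times\ind_{\Z_n\K_n}^{\G_n}\widetilde\tau^{\vee}$. Bi-$\K_n(n)$-invariance and the support bound $\mathrm{supp}(c_{f_{\new},f_{\new}^{\vee}})\subseteq\Z_n\K_n(n)\Sigma_n^{-1}\K_n\Sigma_n\K_n(n)$ follow formally from $\mathrm{supp}(f_{\new})\subseteq\Z_n\K_n\Sigma_n\K_n(n)$ and its analogue for $f_{\new}^{\vee}$ (apply (1)(a) to the depth zero cuspidal $\pi^{\vee}$), and nonvanishing from the fact that $f_{\new}^{\vee}$ generates the irreducible $\pi^{\vee}$; inserting the representatives $y_1,\dots,y_r$ of $\K_n\Z_n\backslash\K_n\Z_n\Sigma_n\K_n(n)$ and the Bessel-function value of $f_{\new}$ at each $\Sigma_n y_i$ then turns the sum into the stated double sum over $(i,j)\in I_g$ and $b,b'\in\B_{n-1}^{\op}(\mathbb{F}_q)$, the constant $|\G(\mathbb{F}_q)|/(|\U(\mathbb{F}_q)|\dim\tau)$ being the normalising factor expressing a matrix coefficient of $\tau$ through its Bessel function. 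For part (3), I would use that, after the harmless conjugation of $\psi$ by $\Sigma_n^{-1}$, the Whittaker function of $\mathcal B_{\widetilde\tau}$ is Gelfand's explicit function $\W_{\pi,\mathrm{Gel},\psi^{\Sigma_n^{-1}}}$ of Paskunas--Stevens, and apply the $\psi$-Whittaker functional $\Lambda$ to the operator identity $f_{\new}=\int_{\K_n(n)}\pi(k\Sigma_n^{-1})\mathcal B_{\widetilde\tau}\,dk$ from (1)(b), commuting $\Lambda$ past the integral; since $\W_{\mathcal B_{\widetilde\tau},\psi}(h)=\W_{\pi,\mathrm{Gel},\psi^{\Sigma_n^{-1}}}(\Sigma_n h)$ (the left translation by $\Sigma_n$ encoding precisely the change of Whittaker character), this gives $\W_{\pi,\new,\psi}(g)=\int_{\K_n(n)}\W_{\pi,\mathrm{Gel},\psi^{\Sigma_n^{-1}}}(\Sigma_n gk\Sigma_n^{-1})\,dk$ after the appropriate normalisations of measure and base point, and the support statement $\mathrm{supp}(\W_{\pi,\new,\psi})\subseteq\Z_n\U_n\Sigma_n^{-1}\K_n\Sigma_n\K_n(n)$ drops out of the known support of Gelfand's function.

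The hard part will be the double-coset bookkeeping inside part (1)(a): proving uniformly in $m$, $\lambda$ and $k$ the dichotomy that $\overline{H}_{\lambda,k,m}$ contains a conjugate of $\N_Q$ for some proper $Q$ --- hence has no cuspidal invariants --- \emph{except} precisely for the coset of $\Sigma_n$ at level $m=n$, where it contracts to exactly $\B_{n-1}^{\op}(k_\F)$ and nothing larger. This is where cuspidality must be used beyond the bare vanishing $\tau^{\N_Q}=0$, namely through the mirabolic restriction $\tau|_{\P_n}\cong\ind_{\U_n}^{\P_n}\overline\psi$, and where the graded congruence conditions defining $\K_n(m)$ have to be weighed carefully against the Cartan parameter so that no spurious coset survives. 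Once this is in place, parts (1)(b), (2) and (3) are essentially bookkeeping with the invariant pairing, the Bessel and Gelfand vectors, and the Haar normalisations.
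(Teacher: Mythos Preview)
Your proposal is correct and follows essentially the same approach as the paper: Mackey theory over explicit $\Z_n\K_n\backslash\G_n/\K_n(m)$ representatives for (1), the invariant pairing and Bessel calculus for (2), and applying the Whittaker functional to the integral identity of (1)(b) for (3).

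Two small refinements for when you execute the details. First, your dichotomy in (1)(a) --- ``contains a conjugate of some $\N_Q$'' versus ``equals $\B_{n-1}^{\op}(k_\F)$'' --- is slightly too coarse. For diagonal representatives $\diag(\varpi^{\alpha_{n-1}},\dots,\varpi^{\alpha_1},1)$ with $\alpha_{n-1}<m$ but a repeated exponent $\alpha_{j-1}=\alpha_j$, the image in $\GL_n(k_\F)$ sits inside the mirabolic and need not contain any full unipotent radical; the vanishing there comes instead from the Kirillov model together with a single root subgroup $1+k_\F E_{j-1,j}$ on which $\overline\psi$ is nontrivial. You clearly have this tool in hand (you invoke $\tau|_{\P_n}\cong\ind_{\U_n}^{\P_n}\overline\psi$), but your statement of the dichotomy should reflect it. Second, in (2) the constant $|\G_n(\mathbb{F}_q)|/(|\U_n(\mathbb{F}_q)|\dim\tau)$ is not just a normalisation: it emerges from expanding both $B_{\tau,\overline\psi}$ and $B_{\tau^\vee,\overline\psi^{-1}}$ via the character formula $B_{\tau,\overline\psi}(a)=|\U_n(\mathbb{F}_q)|^{-1}\sum_u\overline\psi^{-1}(u)\chi_\tau(au)$ and then applying Schur orthogonality to $\sum_{k\in\GL_n(\mathbb{F}_q)}\chi_\tau(k\cdot)\chi_{\tau^\vee}(k)$, which is the one genuinely computational step in that part.
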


A \emph{minimax cuspidal~$\R$-representation}~$\pi$ of~$\G_n$ has a Bushnell--Kutzko model as an induced representation from a compact-mod-centre subgroup~$\mathbf{J}=\E^\times \U^{\lfloor (m+1)/2\rfloor}(\Lambda)$ where~$\E/\F$ is a degree~$n$ field extension embedded in~$\M_n(\F)$, and~$\U^{\lfloor (m+1)/2\rfloor}(\Lambda)$ is a filtration subgroup of a parahoric subgroup of~$\G_n$ associated to the point~$\Lambda$ in the building of~$\G$ (for some specific~$m$ given by the input data for~$\pi$, see Section \ref{ss:strata}).  

One invariant attached to~$\pi$ is its \emph{depth}, in terms of the above inducing data this is given by~$m/e(\E/\F)$.  The special case of minimal depth $m=1$, $e(\E/\F)=n$ the representations are called simple supercuspidals and questions \ref{Q1} and \ref{Q2} are answered in \cite{simpleSCs}.  

Our main minimax theorem on explicit vectors is Theorem \ref{Propositionunramnewforms}, this gives an analogue of the first part of our main depth zero theorem for minimax cuspidal representations of integral depth.  We obtain from this an expression for the associated Whittaker newform showing it is an average of the explicit Whittaker vector of Paskunas--Stevens, Corollary \ref{WhittcorollaryURminimax}.

In further future work, we consider the non-cuspidal case of newforms in generic $\ell$-modular representations.

\subsubsection*{Acknowledgements} Both authors were supported by EPSRC grant EP/V001930/1, and the second author was supported by the Heilbronn Institute for Mathematical Research.  We thank Nadir Matringe and Shaun Stevens for useful conversations.

	\section{Notation}
	\subsection{Smooth representations}
	Let~$\F$ be a non-archimedean local field, with ring of integers~$\mathfrak{o}_{\F}$.  Let~$\mathfrak{p}_{\F}=(\varpi_{\F})$ denote the unique maximal ideal of~$\mathfrak{o}_{\F}$, and~$k_{\F}=\mathfrak{o}_{\F}/\mathfrak{p}_{\F}$ the residue field -- a finite field of size~$q$ a power of~$p$. 
	
Let~$\G_n=\GL_n(\F)$,~$\K_n=\GL_n(\mathfrak{o}_\F)$ and $\Z_n\simeq \F^\times$ denotes the centre of~$\G_n$.  We drop the subscript~$_n$ when it is clear and write~$\G=\GL_n(\F)$,~$\K=\K_n$ etc.

	Let~$\R$ be an algebraically closed field of characteristic~$\ell\neq p$.  Let~$\H$ be a locally profinite group. By an~$\R$-representation of~$\H$ we mean a smooth representation of~$\H$ on an~$\R$-vector space.  
	
	For a closed subgroup~$\J$ of~$\H$, a smooth~$\R$-representation~$(\pi,\mathcal{V})$ of~$\J$, and~$h\in\H$, we write
	\begin{enumerate}[-]
	\item $\pi^h$ for the smooth representation~$(\pi^h,\mathcal{V})$ of~$\J^h:=h^{-1}\J h$, where~$\pi^h:h^{-1}jh\mapsto \pi(j)$.
	\item $\presuper{h}\pi$ for the smooth representation~$(\presuper{h}\pi,\mathcal{V})$ of~$\presuper{h}\J:=h\J h^{-1}$, where~$\pi^h:hjh^{-1}\mapsto \pi(j)$.
	\end{enumerate}
	
	For any commutative ring $\S$ and positive integers $i,j$ let $\M_{i\times j}(\S)$ be the ring of $i\times j$ matrices with entries in $\S$. We will write $1_j$ for the identity matrix in $\M_{j\times j}(\S)$.
	
\subsection{Bessel functions}\label{Section:BesselFunctions}
Let~$\tau$ be a cuspidal~$\R$-representation of~$\GL_n(k_\F)$.  Let~$\B(k_\F)$ be the standard Borel subgroup of~$\GL_n(k_\F)$ of upper triangular matrices, with unipotent radical~$\U(k_\F)$.  Fix a non-trivial character~$\overline{\psi}:k_{\F}\rightarrow \R^\times$, which we extend to a non-degenerate character~$\overline{\psi}:\U(k_\F)\rightarrow \R^\times$ defined by~$\overline{\psi}(u)=\overline{\psi}(u_{1,2}+\cdots+u_{n-1,n})$.

Then~$\tau$ is \emph{generic}:
\[\Hom_{\R[\GL_n(k_\F)]}(\tau,\ind_{\U(k_\F)}^{\GL_n(k_\F)}(\overline{\psi}))\simeq\R,\]
and we write~$\W(\tau,\overline{\psi})$ for its Whittaker model (that is the image of any non-zero morphism in~$\ind_{\U(k_\F)}^{\GL_n(k_\F)}(\overline{\psi}))$).

The Bessel function~$B_{\tau,\overline{\psi}}\in W(\tau,\overline{\psi})$ is the unique~$\overline{\psi}$-bi-invariant function in~$W(\tau,\overline{\psi})$ with~$B_{\tau,\overline{\psi}}(1)=1$.  We record the following useful properties of~$B_{\tau,\overline{\psi}}$ which follow from \cite{Gelfand} and the compatibility of the Bessel function with reduction modulo~$\ell$:

\begin{enumerate}[(B1)]
\item \label{besselfunction1} Let~$\P_n(k_\F)$ be the standard mirabolic subgroup of~$\GL_n(k_\F)$ of all matrices with last row~$\left(\begin{smallmatrix}0&\cdots &0&1\end{smallmatrix}\right)$.  Then for~$p\in\P_n(k_\F)$, the function~$B_{\tau,\overline{\psi}}(p)$ is nonzero if and only if $p\in\U(k_\F)$.
\item\label{besselfunction2} Let~$\chi_{\tau}$ denote the trace character of~$\tau$.  Then~$B_{\tau,\overline{\psi}}(a)=|\U(\mathbb{F}_q)|^{-1}\sum_{\U(\mathbb{F}_q)}\overline{\psi}^{-1}(u)\chi_{\tau}(au)$, for~$a\in\GL_n(k_\F)$.
\item For~$z\in k_{\F}^\times$ and~$a\in\GL_n(k_\F)$, we have~$B_{\tau,\overline{\psi}}(az)=\omega_{\tau}(z)B_{\tau,\overline{\psi}}(a)$.
\item Let~$\tau^\vee$ denote the contragredient of~$\tau$, for~$a\in \GL_n(k_\F)$, we have $B_{\tau,\overline{\psi}}(a^{-1})=B_{\tau^{\vee},\overline{\psi}^{-1}}(a)$.
\end{enumerate}

	\section{Newforms for cuspidal representations}

\subsection{Conductors and newforms}

\begin{definition}
Let~$\pi$ be a generic~$\R$-representation of~$\G_n$ with~$n\geqslant 2$.
\begin{enumerate}
\item For~$m\in\mathbb{N}$, let~$\K_n(m)$ denote the \emph{conductor subgroup} of~$\G_n$, defined by
\[\K_n(m)=\left\{\left(\begin{smallmatrix}a&b\\c&d\end{smallmatrix}\right)\in \K_n:c\in \M_{1\times (n-1)}(\mathfrak{p}_{\F}^m), d\in (1+\mathfrak{p}_\F^m)\right\}.\]
Set~$\K_n(0)=\K_n=\GL_n(\mathfrak{o}_\F)$.
\item If there is a nonnegative integer such that~$\pi^{\K_n(m)}\neq 0$, then we define the \emph{conductor} $c(\pi)$ of~$\pi$ to be the minimal (non-negative)~$m$ such that~$\pi^{\K_n(m)}\neq 0$.
\item We say that~$\pi$ is \emph{unramified} if~$c(\pi)=0$ and \emph{ramified} otherwise.
\item Let~$\mathcal{V}$ be the underlying~$\R$-vector space of~$\pi$ considered as an~$\R[\G_n]$-module via~$\pi$.  We call an element of~$\mathcal{V}^{\K_n(c(\pi))}$ a newform for~$\pi$.
\end{enumerate}
\end{definition}

The existence of newforms for generic $\mathbb{C}$-representations (or equivalently~$\Ql$-representations) is established by Jacquet--Piatetski-Shapiro--Shalika, with a different proof given by Matringe:

\begin{thm}[{\cite{JPSS,Jacquet,Matringe}}]\label{JPSSMatringe}
Let~$\pi$ be a generic~$\Ql$-representation of~$\G_n$.  The conductor~$c(\pi)$ exists and, moreover,~$\pi^{\K_n(c(\pi))}$ is one-dimensional.
\end{thm}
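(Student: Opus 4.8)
The plan is to reduce the theorem to a purely local computation via the theory of Rankin--Selberg integrals and the functional equation for $\GL_n \times \GL_{n-1}$. First I would recall that any generic $\Ql$-representation $\pi$ of $\G_n$ embeds in its Whittaker model $\W(\pi,\psi)$ with respect to a fixed non-degenerate character $\psi$ of $\U_n$, and that the essential vector / newform can be detected through the Rankin--Selberg zeta integrals $\Psi(s, W, \Phi)$ attached to $W \in \W(\pi,\psi)$ together with test data on $\G_{n-1}$. The $\K_n(m)$-invariance of a Whittaker function $W$ should be translated into a concrete statement about the restriction of $W$ to the mirabolic subgroup $\P_n$ and its behaviour under the compact group $\K_{n-1}(m)$ sitting inside the lower-right block.

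Next I would carry out the following steps. (1) Show existence: produce explicitly the \emph{essential Whittaker function} $W_\pi$, characterized by the property that $\Psi(s, W_\pi, \Phi_0) = L(s,\pi)$ for the standard Schwartz function $\Phi_0$ and $\Psi(s, W_\pi, \Phi)$ realizing the local factor, and verify by direct change-of-variables in the integral — using the invariance of the $L$-factor — that $W_\pi$ is fixed by $\K_n(c)$ where $c$ is the exponent of the conductor $\varpi_\F^c$ of $\pi$ appearing in the epsilon factor. This gives $\pi^{\K_n(c(\pi))} \neq 0$ and pins down $c(\pi)$. (2) Show that no smaller level works: if $0 \neq v \in \pi^{\K_n(m)}$ with $m < c$, feed its Whittaker function into the zeta integral against a well-chosen family of $\Phi$ and $W'$ in a generic representation of $\G_{n-1}$, and derive from the functional equation a contradiction with the known conductor in the epsilon factor (this is the Jacquet--Piatetski-Shapiro--Shalika counting argument via the action of $\K_{n-1}(m)$ and the support properties of Whittaker functions on $\P_n$). (3) Show uniqueness: any two newforms have proportional zeta integrals against all test data, hence the same image in the Whittaker model up to scalar by the multiplicity-one / Kirillov-model density statement, so $\dim \pi^{\K_n(c(\pi))} = 1$.

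The main obstacle, and the technical heart of the argument, is Step (2): proving the vanishing $\pi^{\K_n(m)} = 0$ for $m < c(\pi)$. This is where one genuinely uses the full strength of the local functional equation for $\GL_n \times \GL_{n-1}$ and the precise value of the conductor exponent in the $\epsilon$-factor; naively, $\K_n(m)$-invariance only constrains $W$ on a compact piece and one must leverage the interaction between the two sides of the functional equation (the ``$\widetilde{\pi}$-side'' at level matched by the Weyl element $w_n$) to force the Whittaker function to be identically zero. One has to handle carefully the contragredient $\widetilde{\pi}$, the intertwining with $\K_n(m)$ under $g \mapsto w_n \, {}^t g^{-1} w_n$, and the fact that over $\Ql$ (characteristic zero) the relevant $L$- and $\epsilon$-factors behave exactly as in the complex case so that the classical arguments of \cite{JPSS, Jacquet} and the reformulation of \cite{Matringe} via mirabolic restriction go through verbatim. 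Since the statement as given is precisely the theorem of those papers, in practice I would simply invoke their results; the ``proof'' here is the observation that $\Ql \cong \mathbb{C}$ as abstract fields (or that all constructions are defined over a number field) so that \cite{JPSS,Jacquet,Matringe} apply directly to $\Ql$-representations.
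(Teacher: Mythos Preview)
The paper does not prove this theorem at all: it is stated purely as a citation to \cite{JPSS,Jacquet,Matringe} and used as a black box throughout. Your final sentence is therefore exactly what the paper does implicitly --- the result is quoted from the literature, with the passage from $\mathbb{C}$ to $\Ql$ being immediate since both are algebraically closed fields of characteristic zero. The preceding sketch of the Rankin--Selberg/functional-equation argument is a fair outline of what happens in the cited references (closer to Jacquet's and Matringe's treatments than to the original \cite{JPSS}), but none of it is reproduced in the present paper, so there is nothing to compare against.
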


Let~$\pi$ be a cuspidal~$\Ql$-representation of~$\G_n$ with~$n\geqslant 2$, then by \cite{Bushnell},~
\begin{equation}\tag{$\star$}
\label{Bushnellseq}c(\pi)=n(1+d(\pi)),\end{equation} where~$d(\pi)$ is the \emph{depth} of~$\pi$.
\subsection{Conductors modulo~$\ell$}
The existence of newforms for cuspidal~$\Fl$-representations follows immediately by reduction modulo~$\ell$, and if~$\pi$ is an integral~$\Ql$-representation of~$\G_n$ we have~$c(r_{\ell}(\pi))\leqslant c(\pi)$. 

\begin{remark}\label{rkconductorsliftsequal}
Let~$\widetilde{\pi},\widetilde{\pi}'$ be integral cuspidal~$\Ql$-representations of~$\G_n$ with~$n\geqslant 2$.  Suppose~$r_{\ell}(\widetilde{\pi})=r_{\ell}(\widetilde{\pi}')$, then~$\widetilde{\pi}$ and~$\widetilde{\pi}'$ have the same depth, and hence the same conductor by Bushnell's equation \eqref{Bushnellseq}.  
\end{remark} 

\begin{proposition}\label{Scconductor}
Let~$\pi$ be a supercuspidal~$\Fl$-representation of~$\G_n$ with $n\geqslant 2$, and~$\widetilde{\pi}$ a supercuspidal lift of~$\pi$.  Then
$c(\pi)=
c(\widetilde{\pi})$.
\end{proposition}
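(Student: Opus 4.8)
The plan is to deduce the equality of conductors from the existence of a common parametrization of $\pi$ and $\widetilde{\pi}$ by Bushnell--Kutzko/Vign\'eras data, combined with Bushnell's formula \eqref{Bushnellseq}, which holds for all cuspidal $\Ql$-representations. Since by Remark \ref{rkconductorsliftsequal} two integral cuspidal $\Ql$-lifts with the same reduction mod $\ell$ automatically have the same conductor, it suffices to show that $c(\pi) = c(\widetilde{\pi})$ holds for one chosen supercuspidal lift $\widetilde{\pi}$, and that both sides are governed by the same depth-type invariant. The essential point is that reduction modulo $\ell$ is compatible with the compact induction presentation: if $\pi \simeq \ind_{\mathbf{J}}^{\G_n}(\boldsymbol\lambda)$ with $(\mathbf{J},\boldsymbol\lambda)$ a supercuspidal type in Vign\'eras's list, then $\boldsymbol\lambda$ admits an integral lift $\widetilde{\boldsymbol\lambda}$ with $r_\ell(\widetilde{\boldsymbol\lambda}) = \boldsymbol\lambda$, and $\widetilde{\pi} := \ind_{\mathbf{J}}^{\G_n}(\widetilde{\boldsymbol\lambda})$ is then a supercuspidal $\Ql$-representation with $r_\ell(\widetilde\pi) = \pi$. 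Crucially, $\mathbf{J}$ and hence the underlying simple stratum are the same for $\pi$ and $\widetilde{\pi}$, so the depth is literally the same rational number $m/e(\E/\F)$ on both sides.

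The key steps, in order, are as follows. First I would recall that $\pi$, being supercuspidal over $\Fl$, is compactly induced from an explicit supercuspidal type $(\mathbf{J},\boldsymbol\lambda)$ by the Bushnell--Kutzko--Vign\'eras classification, and that the inducing datum determines the depth $d(\pi)$ via the simple stratum underlying $\mathbf{J}$. Second, I would lift $\boldsymbol\lambda$ to an integral $\Ql$-representation $\widetilde{\boldsymbol\lambda}$ of the \emph{same} group $\mathbf{J}$ reducing to $\boldsymbol\lambda$ --- this is where one invokes that the relevant Heisenberg/Weil and level-zero pieces of the type lift integrally, a standard fact in the modular theory --- and set $\widetilde{\pi} = \ind_{\mathbf{J}}^{\G_n}(\widetilde{\boldsymbol\lambda})$, checking that $\widetilde\pi$ is irreducible supercuspidal and $r_\ell(\widetilde\pi) = \pi$. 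Third, by construction $d(\widetilde{\pi}) = d(\pi)$ since the stratum data coincide, so Bushnell's equation \eqref{Bushnellseq} gives $c(\widetilde{\pi}) = n(1 + d(\widetilde\pi)) = n(1 + d(\pi))$. Fourth, for the inequality in the other direction I would use that $r_\ell$ sends a $\K_n(m)$-fixed vector to a $\K_n(m)$-fixed vector, so $c(\pi) = c(r_\ell(\widetilde\pi)) \leqslant c(\widetilde{\pi})$; this reduces the problem to proving $c(\pi) \geqslant c(\widetilde\pi) = n(1+d(\pi))$, i.e.\ that the mod-$\ell$ conductor is not strictly smaller. Finally, since the statement claims an exact equality for \emph{every} supercuspidal lift and not just the constructed one, I would invoke Remark \ref{rkconductorsliftsequal} to transfer the conclusion from $\widetilde\pi$ to an arbitrary supercuspidal lift $\widetilde\pi'$ of $\pi$: any two such lifts have equal depth and hence equal conductor.

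I expect the main obstacle to be the fourth step: ruling out that $\pi^{\K_n(m)} \neq 0$ for some $m < n(1+d(\pi))$. Over $\Ql$ this is exactly Bushnell's computation, but a priori a congruence modulo $\ell$ could create extra invariant vectors at a smaller level, so one genuinely needs a lower bound on the conductor that is intrinsic to the $\Fl$-representation. The natural way to get this is to show that a $\K_n(m)$-fixed vector in $\ind_{\mathbf{J}}^{\G_n}(\boldsymbol\lambda)$ forces, via Mackey theory / the geometry of $\mathbf{J}\backslash\G_n/\K_n(m)$ double cosets, a nonzero $(\mathbf{J}^g \cap \K_n(m))$-intertwining that already obstructs $m$ from being too small --- and this argument is characteristic-free because it only uses the filtration structure of $\mathbf{J}$ and the fact that $\boldsymbol\lambda$ is trivial on no proper piece of the relevant filtration. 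Concretely I would run the Mackey-theoretic computation of $\ind_{\mathbf{J}}^{\G_n}(\boldsymbol\lambda)^{\K_n(m)} = \bigoplus_g \Hom_{\mathbf{J}\cap\,^g\K_n(m)}(\boldsymbol\lambda, \,^g 1)$ and show the summands all vanish for $m < n(1+d(\pi))$ by a support/filtration argument identical in form to Bushnell's, noting that each step is a statement about representations of finite groups or about the vanishing of a character sum that is insensitive to the coefficient field. The depth-zero case, handled separately and in more detail later in the paper via an explicit Mackey argument over $\Z_n\K_n$, is a model for this; here one needs the analogous bookkeeping for the general simple stratum, but no new idea beyond reducing modulo $\ell$ a statement that is ultimately finite-group-theoretic.
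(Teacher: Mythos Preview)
Your outline correctly handles the easy inequality $c(\pi)\leqslant c(\widetilde\pi)$ and correctly identifies that the content lies in the reverse inequality. But your proposed attack on the reverse inequality is where the gap is. You plan to show directly, over~$\Fl$, that $\pi^{\K_n(m)}=0$ for all $m<n(1+d(\pi))$ by ``running Bushnell's argument characteristic-free.'' The problem is that Bushnell's formula $c(\widetilde\pi)=n(1+d(\widetilde\pi))$ is \emph{not} proved by a Mackey/double-coset vanishing argument of the kind you sketch; it goes through non-abelian congruence Gauss sums and the functional equation of the epsilon factor. So there is no existing characteristic-zero argument to simply transport, and your claim that ``each step is insensitive to the coefficient field'' is an assertion, not a proof. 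Carrying out such a direct Mackey argument for an arbitrary simple type would be a substantial project in its own right (the paper does exactly this in the depth-zero case, and you can see there how much explicit double-coset bookkeeping is required even when~$\mathbf{J}=\Z_n\K_n$).

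The paper's proof avoids this entirely and is genuinely different from your plan. Rather than bounding~$c(\pi)$ from below by an intrinsic $\Fl$-argument, it \emph{lifts the fixed vector}: starting from $\Hom_{\Fl[\J\cap\K_n(m)]}(1,\kappa\otimes\tau)\neq 0$ (after Mackey), one passes to the projective $\Zl$-envelope~$\kappa_{\Zl}\otimes P_\tau$. Supercuspidality of~$\tau$ over~$\Fl$ is used here in an essential way: it guarantees that $P_\tau\otimes\Fl$ is $\tau$-isotypic (so the nonvanishing survives) and that $P_\tau\otimes\Ql$ splits as a direct sum of supercuspidal lifts~$\widetilde\tau$. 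Since the $\Zl$-Hom space is $\ell$-torsion-free, its nonvanishing over~$\Fl$ forces nonvanishing over~$\Ql$, hence some $\widetilde\tau$ contributes, and reversing Mackey gives a lift $\widetilde\pi$ with $c(\widetilde\pi)\leqslant c(\pi)$. This is the Cui--Lanard--Lu style argument: one never needs a mod-$\ell$ lower bound on the conductor, only the already-known $\Ql$ theory together with a projectivity/torsion-freeness trick to transfer a nonzero Hom across the reduction map.
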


This essentially follows from adapting the argument of Cui--Lanard--Lu \cite[Theorem 3.4]{CLL} (who show one can lift distinction by a closed subgroup in this setting -- we apply similar ideas to lifting invariants by~$\K_n(m)$).  In fact, the approach of ibid.~simplifies in our case as~$\K_n(m)$ is a compact open subgroup of~$\G_n$, and we give the full details:
\begin{proof}
We have already seen in Remark \ref{rkconductorsliftsequal} that~$c(\widetilde{\pi})$ is independent of the choice of lift~$\widetilde{\pi}$, and~$c(r_{\ell}(\pi))\leqslant c(\pi)$.  
It thus suffices to construct a lift~$\widetilde{\pi}$ with a vector invariant by~$\K_n(c(\pi))$.  We can choose a model from type theory for~$\pi$ :
\[\pi\simeq \ind_{\J \E^\times}^{\G_n}(\Lambda),\] 
with \emph{Bushnell--Kutzko--Vign\'eras supercuspidal type}~$(\J\E^\times,\Lambda)$, where~$\J$ is the maximal compact open subgroup of~$\J\E^\times$ and~$\E$ is a field extension of~$\F$ embedded in~$\mathrm{M}_n(\F)$.  As in \cite{BK93}, summarised and extended to the modular setting in \cite[III]{Vig96}, the compact open subgroup~$\J$ is constructed together with a pro-$p$ normal subgroup~$\J^1$ of~$\J$ such that :
\begin{enumerate}
\item $\Lambda_{|\J}=\kappa\otimes \tau$ where~$\kappa$ is a ``beta extension'' of an irreducible ``Heisenberg representation''~$\eta$ of~$\J^1$;
\item $\tau$ is trivial on~$\J^1$ and identifies with an irreducible supercuspidal representation of~$\GL_{s}(k_{\E})\simeq \J/\J^1$ where~$s=s(\pi)$ is an invariant of~$\pi$ called its \emph{relative degree}.
\end{enumerate}
Let~$m=c(\pi)$,~i.e.~$\pi^{\K_n(m)}\neq 0$.  Then by Mackey Theory we have :
\begin{align*}
0\neq \Hom_{\Fl[\K_n(m)]}(1,\ind_{\J \E^\times}^{\G_n}(\Lambda))
&\simeq \Hom_{ \Fl[\K_n(m)]}(1,\bigoplus_{}\ind_{(\J \E^\times)^g\cap \K_n(m)}^{\K_n(m)}(\Lambda^g))\\
&\simeq  \Hom_{ \Fl[\K_n(m)]}(1,\bigoplus_{}\ind_{\J^g\cap \K_n(m)}^{\K_n(m)}(\Lambda^g)),
\end{align*}
as~$\K_n(m)$ is compact hence~$(\J\E^\times)^g\cap \K_n(m)=\J^g\cap \K_n(m)$.  
Hence, there exists~$g\in\G$, such that
\[0\neq  \Hom_{ \Fl[\K_n(m)]}(1,\ind_{\J^g\cap \K_n(m)}^{\K_n(m)}(\Lambda^g))\simeq \Hom_{\Fl[\J^g\cap \K_n(m)]}(1,\kappa^g\otimes\tau^g).\]
By conjugating the type which does not change the isomorphism class of~$\pi$, we can assume that the trivial double coset contributes a non-zero Hom-space.  Now
\[0\neq \Hom_{\Fl[\J \cap \K_n(m)]}(1,\kappa\otimes\tau)\hookrightarrow \Hom_{\Fl[\J \cap \K_n(m)]}(1,\kappa\otimes (P_{\tau}\otimes\Fl)),\]
where~$P_{\tau}$ is the projective~$\Zl[\GL_s(k_\E)]$-envelope of~$\tau$.  
Moreover, as~$\tau$ is supercuspidal,
\begin{enumerate}
\item $P_{\tau}\otimes \Fl$ is in fact~$\tau$-isotypic;
\item $P_{\tau}$ is cuspidal, with~$P_{\tau}\otimes \Ql\simeq \bigoplus\widetilde{\tau}$ where the sum is over all supercuspidal~$\Ql$-representations~$\widetilde{\tau}$ of~$\GL_s(k_\E)$ with $r_{\ell}(\widetilde{\tau})=\tau$.
\end{enumerate}
We lift~$\kappa$ to an integral beta extension~$\kappa_{\Zl}$, then the representation~$\kappa_{\Zl}\otimes P_{\tau}$ is a projective envelope of~$\kappa_{\Fl}\otimes \tau$ in the category of~$\Zl[\J]$-modules -- cf.~\cite[Lemma 4.8]{HelmBC}.   
Moreover, if~$\R=\Ql$ or $\R=\Fl$ we have
\begin{align*}
\Hom_{\Zl[\J \cap \K_n(m)]}(1_{\Zl},\kappa_{\Zl}\otimes P_{\tau})\otimes \R&\simeq \Hom_{\R[\J \cap \K_n(m)]}(1_{\R},(\kappa\otimes P_{\tau})\otimes\R),
\end{align*}
and $\Hom_{\Zl[\J \cap \K_n(m)]}(1_{\Zl},\kappa_{\Zl}\otimes P_{\tau})$ is~$\ell$-torsion free (as~$\kappa_{\Zl}\otimes P_{\tau}$ is~$\ell$-torsion free).

Thus, as~$\Hom_{\Zl[\J \cap \K_n(m)]}(1,\kappa_{\Zl}\otimes P_{\tau})\otimes \Fl\neq 0$, we have
\[0\neq \Hom_{\Zl[\J \cap \K_n(m)]}(1,\kappa_{\Zl}\otimes P_{\tau})\otimes \Ql\simeq\bigoplus_{\widetilde{\tau}: r_{\ell}(\widetilde{\tau})=\tau}\Hom_{\Ql[\J \cap \K_n(m)]}(1,\kappa_{\Ql}\otimes \widetilde{\tau}).\]
Hence there exists~$\widetilde{\tau}$ such that~$\Hom_{\Ql[\J \cap \K_n(m)]}(1,\kappa_{\Ql}\otimes \widetilde{\tau})\neq 0$, and reversing the Mackey theory over~$\Ql$ for~$\widetilde{\pi}=\ind_{\J\E^\times}^{\G_n}(\widetilde{\Lambda})$, for any extension~$\widetilde{\Lambda}$ of~$\kappa_{\Ql}\otimes \widetilde{\tau}$ to~$\J\E^\times$ (in particular for a lift~$\widetilde{\pi}$ of~$\pi$), we find that~$c(\pi)=m\geqslant c(\widetilde{\pi})$, hence~$c(\pi)=c(\widetilde{\pi})$.
\end{proof}

\begin{remark}\label{cuspnonscex}
We expect that the proposition holds for the more general case of \emph{cuspidal~$\Fl$-representations} of~$\G_n$ with~$n\geqslant 2$, and will prove this in the special case of depth zero cuspidal~$\Fl$-representations.  However, for a cuspidal non-supercuspidal~$\Fl$-representation there exist irreducible non-supercuspidal integral~$\Ql$-representations which contain the cuspidal representation as a subquotient on reduction modulo~$\ell$, and the conductors of these can differ as the following example shows:  Let~$\G_2=\GL_2(\F)$ and for a character~$\chi$ of~$\F^\times$ denote by~$\St_2(\chi)$ the \emph{generalized Steinberg}~$\Ql$-representation which is the unique irreducible quotient of~$\chi \nu^{1/2}\times\chi\nu^{-1/2}$.    We have
\begin{equation*}
c(\St_2(\chi))=\begin{cases} 1&\text{if~$\chi$ is unramified,} \\
2&\text{if~$\chi$ is ramified.}\end{cases}\end{equation*}  Suppose~$\chi$ is integral unramified and~$\ell\mid q+1$.  Then~$r_{\ell}(\St_2(\chi))$ contains a depth zero cuspidal non-supercuspidal subquotient~$\pi$ (see for example \cite{VignerasComp}) which has conductor~$2$ by Theorem \ref{depthzerocoset}.  However, for such an unramified~$\chi$,~$1=c(\St_2(\chi))\neq c(\pi)=2.$  

Notice also the analogue of Proposition \ref{Scconductor} for characters of~$\G_1$ only holds for ramified characters.   This leads us to expect that subtleties occur whenever the supercuspidal support of a generic representation contains an unramified character.
\end{remark}

\subsection{Newforms vectors in models}
With applications to test vector problems in mind, one can ask for an explicit description of the newform in an explicit model of a representation.

\subsubsection{Whittaker models}
\begin{thm}[{\cite{Matringe}}, {\cite{Miyauchi}}]\label{MatMiy}
Let~$\pi$ be a generic $\mathbb{C}$-representation of~$\G_n$ with~$n\geqslant 2$, and~$\psi:\F\rightarrow \mathbb{C}^\times$ of conductor~$0$.  There exists a unique Whittaker function~$W_{\new,\pi}\in W(\pi,\psi)$, which is right~$\K_{n-1}$-invariant, and which satisfies, for~$
t=\diag(t_1,\ldots,t_{n-1}) \in \T_{n-1}$ the equality :
\begin{align*}
W_{\new,\pi}(\diag(t,1))=
\begin{cases}
W_{0,\pi_u}(t_r)\nu(t_r)^{\frac{n-r}{2}}\prod_{r<i<n} \mathbf{1}_{\mathfrak{o}_{\F}^\times}(t_i)&\text{when~}r\geqslant 1;\\
\prod_{1<i<n} \mathbf{1}_{\mathfrak{o}_{\F}^\times}(t_i)&\text{when~}r=0;\end{cases}
\end{align*}
where~$t_r=\diag(t_1,\ldots,t_{r})\in\T_{r}$, and~$\pi_u$ is an unramified standard module of~$\G_r$ associated to~$\pi$ in \cite[Definition 1.3]{Matringe}.  
\end{thm}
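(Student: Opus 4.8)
The plan is to follow Jacquet--Piatetski-Shapiro--Shalika \cite{JPSS, Jacquet} and Matringe \cite{Matringe}, keeping Miyauchi's \cite{Miyauchi} route through Bernstein--Zelevinsky derivatives in reserve as a cross-check; the argument has a formal part (reduction to torus values) and a substantive part (an $L$-function computation, itself proved by induction on $n$). For the reduction: under the embedding $g\mapsto\diag(g,1)$ one has $\diag(\GL_{n-1}(\mathfrak o_\F),1)\subseteq\K_n(m)$ for every $m\geqslant 0$, so any newform --- which by definition lies in $\pi^{\K_n(c(\pi))}$ --- is automatically right $\K_{n-1}$-invariant. A right $\K_{n-1}$-invariant function in $W(\pi,\psi)$, being also left $(\U_n,\psi)$-equivariant, is determined through the Iwasawa decomposition $\G_{n-1}=\U_{n-1}\T_{n-1}\K_{n-1}$ by the values $W_{\new,\pi}(\diag(t,1))$, $t\in\T_{n-1}$, and smoothness together with $\psi$-equivariance forces these to vanish outside a cone in $\T_{n-1}$. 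So the theorem amounts to evaluating these torus values of the newform; the uniqueness clause then follows from Theorem~\ref{JPSSMatringe} together with the structure of the double cosets $\U_n\backslash\G_n/\K_n(c(\pi))$ worked out in \cite{JPSS, Jacquet}, which forces a right $\K_{n-1}$-invariant Whittaker function with these torus values to be proportional to the newform. One also checks that the right-hand side of the displayed formula is not identically zero, which pins down the normalization.

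For the computation the engine is the Rankin--Selberg integral of \cite{JPSS}: for a generic representation $\sigma$ of $\G_{n-1}$ and $W'\in W(\sigma,\psi^{-1})$,
\[\Psi(s,W_{\new,\pi},W')=\int_{\U_{n-1}\backslash\G_{n-1}}W_{\new,\pi}\begin{pmatrix}g&\\&1\end{pmatrix}W'(g)\,|\det g|^{s-\frac12}\,dg\]
lies in $L(s,\pi\times\sigma)\,\mathbb C[q^{s},q^{-s}]$. The key input is that the newform is the \emph{essential vector} in the sense of \cite{JPSS}: when $\sigma$ is an unramified principal series of $\G_{n-1}$ and $W'=W_{0,\sigma}$ its normalized spherical Whittaker function (again right $\K_{n-1}$-invariant), one has $\Psi(s,W_{\new,\pi},W_{0,\sigma})=L(s,\pi\times\sigma)$ with no spurious polynomial factor. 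Writing the left-hand side by Iwasawa as a series over $\T_{n-1}$ modulo its maximal compact subgroup and inserting the Casselman--Shalika formula for $W_{0,\sigma}$, this identity says precisely that a multivariable Hecke transform of $t\mapsto W_{\new,\pi}(\diag(t,1))$ equals the expansion of $L(s,\pi\times\sigma)$ in the Satake parameters of $\sigma$. Since $\sigma$ is unramified, $L(s,\pi\times\sigma)$ depends only on the unramified standard module $\pi_u$ of $\pi$ (a representation of $\G_r$ built from the unramified part of the standard module), so $L(s,\pi\times\sigma)=L(s,\pi_u\times\sigma)$, and the Whittaker function $W_{0,\pi_u}$ of $\pi_u$ satisfies the same identity on $\G_r$; matching the two Hecke transforms on the support cone, and carrying along the modulus-character bookkeeping that converts the $g\mapsto\diag(g,1)$ normalization on $\G_n$ into the $\G_r$-picture --- this is what produces the factor $\nu(t_r)^{(n-r)/2}$ and the constraints $t_{r+1},\dots,t_{n-1}\in\mathfrak o_\F^\times$ --- forces the stated formula. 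The case $r=0$ is the degenerate one where $L(s,\pi)=1$ and the integral against every unramified $W'$ equals $1$.

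The essential-vector identity itself is proved by induction on $n$, via the local functional equation relating $\Psi(s,W_{\new,\pi},W')$ to the analogous integral at $1-s$ for the contragredients with proportionality $\gamma(s,\pi\times\sigma,\psi)$, the factorization of the $\gamma$- and $L$-factors over the essentially-square-integrable segments $\rho_1,\dots,\rho_k$ of the standard module of $\pi$, and a reduction of $\Psi$ to those segments, bottoming out at the essentially-square-integrable and abelian cases where the $L$- and $\varepsilon$-factors are those of Tate. I expect the genuine obstacle to be exactly the control of \emph{which} right $\K_n(m)$-fixed vector is the essential vector: one must use the explicit shape of the subgroups $\K_n(m)$ together with the one-dimensionality in Theorem~\ref{JPSSMatringe} to show that the minimal level $m=c(\pi)$ prevents the Rankin--Selberg integral from acquiring an extra polynomial factor --- equivalently, that the conductor defined by $\K_n(m)$-invariance agrees with the conductor read off from the $L$- and $\varepsilon$-factors. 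A secondary, purely bookkeeping, difficulty is keeping track of the $\nu^{1/2}$-twists down the tower $\G_n\supset\G_{n-1}\supset\cdots$ so that the exponent $\tfrac{n-r}{2}$ and the index set $\{i:r<i<n\}$ come out precisely as stated; here Miyauchi's alternative --- filtering $\pi|_{\P_n}$ by its Bernstein--Zelevinsky derivatives and peeling off the newform component by component to get a direct recursion in $n$ on the torus values --- provides an independent check.
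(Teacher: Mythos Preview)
The paper does not prove this theorem: it is stated as a citation to Matringe and Miyauchi and is used only as background (to motivate the Whittaker-model discussion and to record what is already known over~$\mathbb{C}$). There is therefore no ``paper's own proof'' to compare against.

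Your proposal is a reasonable high-level sketch of how the cited references argue, and the two strands you identify --- the Rankin--Selberg/essential-vector route of \cite{JPSS,Jacquet,Matringe} and the Bernstein--Zelevinsky derivative route of \cite{Miyauchi} --- are indeed the two proofs in the literature. As written, however, it is an outline rather than a proof: the inductive step establishing the essential-vector identity, the precise invocation of the Casselman--Shalika formula to invert the Hecke transform, and the bookkeeping that isolates the factor $\nu(t_r)^{(n-r)/2}$ are all gestured at but not carried out. If you intend this as a self-contained proof you would need to fill those in; if you intend it as a summary of the cited arguments, it would be clearer simply to cite \cite{Matringe,Miyauchi} as the paper does.
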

This formula characterizes the local newform (normalized at~$1$) in the Whittaker model.  Note that, if~$\pi$ is cuspidal (and not an unramified character of~$\G_1$), then~$\pi_u$ is trivial.  

For a generic~$\Ql$-representation~$\pi$, the map~$\Res_{\P_n}:W\rightarrow W_{|_{\P_n}}$ on~$W(\pi,\psi_{\A})$ is injective, and we denote its image (which gives the \emph{Kirillov} model for~$\pi$) by~$\mathcal{K}(\pi,\psi)$.   By the Iwasawa decomposition~$\P_n=\U_n \T_{n-1}\K_{n-1}$, the Matringe--Miyauchi formula gives the description of~$\Res_{\P_n}(\W_{\new,\pi})$, i.e.~describes its image in the Kirillov model~$\mathcal{K}(\pi,\psi)$.  Hence, by right~$\K_n(c(\pi))$-invariance, it describes~$\W_{\new,\pi}$ on~$\P_n\K_n(c(\pi))$.

\subsubsection{Matrix coefficients in the cuspidal case}
Let~$(\pi,\mathcal{V})$ be a cuspidal~$\R$-representation of~$\G_n$, with contragredient~$(\pi^\vee,\mathcal{V}^{\vee})$, and let~$c(\pi),c(\pi^{\vee})$ denote their conductors.   
Let~$v^{\vee}_{\new}\in(\mathcal{V}^{\vee})^{\K_n(c(\pi))}$ be non-zero, aka a newform in~$\pi^{\vee}$.  We have the standard intertwiner
\begin{align*}
\pi&\rightarrow \ind_{\Z}^{\G}(\omega_\pi)\\
v&\mapsto c_{v,v^{\vee}_{\new}}:g\mapsto \langle \pi(g) v,v^{\vee}_{\new}\rangle.
\end{align*}

\begin{lemma}
Let~$\pi$ be a cuspidal~$\R$-representation such that~$c(\pi)=c(\pi^{\vee})$, and~$\pi^{\K_n(c(\pi))}\simeq (\pi^{\vee})^{\K_n(c(\pi))}\simeq\R$ (known for example, if~$\R=\Ql$ by Theorem \ref{JPSSMatringe}).
\begin{enumerate}
\item If~$v\in\mathcal{V}^{\K_n(c(\pi))}$ is non-zero then the coefficient~$c_{v,v^{\vee}_{\new}}$ is non-zero.
\item Set~$c_{\pi,\new}=c_{v_{\new},v^{\vee}_{\new}}$.  For~$k,k'\in \K_n(c(\pi))$, and~$g\in\G$, we have
\begin{equation*}
c_{\pi,\new}(kgk')=c_{\pi,\new}(g),
\end{equation*}
and $c_{\pi,\new}$ is the unique matrix coefficient of~$\pi$ satisfying this up to scalar.
\end{enumerate}
\end{lemma}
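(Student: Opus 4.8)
The plan is to handle the two parts separately, with part (1) feeding into part (2). For part (1), suppose for contradiction that $c_{v,v^\vee_\new}=0$, i.e.\ $\langle\pi(g)v,v^\vee_\new\rangle=0$ for all $g\in\G$. Since $\pi$ is irreducible (cuspidal), the vectors $\pi(g)v$ span $\mathcal V$, so $v^\vee_\new$ would be zero in $\mathcal V^\vee$, contradicting the hypothesis that it is a (non-zero) newform. This step is essentially formal — irreducibility plus non-degeneracy of the canonical pairing between $\mathcal V$ and $\mathcal V^\vee$ — so it is not the main obstacle. The only thing to be slightly careful about is that $\mathcal V^\vee$ here is the smooth dual, so one should phrase the argument as: the annihilator of $\mathrm{span}\{\pi(g)v\}$ in the full dual contains $v^\vee_\new$, but that span is all of $\mathcal V$.

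For part (2), the bi-$\K_n(c(\pi))$-invariance is immediate from the construction: for $k,k'\in\K_n(c(\pi))$,
\[
c_{\pi,\new}(kgk')=\langle\pi(kgk')v_\new,v^\vee_\new\rangle=\langle\pi(g)\pi(k')v_\new,\pi^\vee(k^{-1})v^\vee_\new\rangle=\langle\pi(g)v_\new,v^\vee_\new\rangle,
\]
using that $v_\new$ is $\K_n(c(\pi))$-fixed and $v^\vee_\new$ is $\K_n(c(\pi))$-fixed (so $\pi^\vee(k^{-1})v^\vee_\new=v^\vee_\new$). The substantive point is uniqueness up to scalar of a bi-$\K_n(c(\pi))$-invariant matrix coefficient. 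Here I would argue as follows. A matrix coefficient of $\pi$ is a function of the form $g\mapsto\langle\pi(g)w,w^\vee\rangle$ for $w\in\mathcal V$, $w^\vee\in\mathcal V^\vee$; it is left-$\K_n(c(\pi))$-invariant precisely when $w^\vee$ may be taken in $(\mathcal V^\vee)^{\K_n(c(\pi))}$ (average $w^\vee$ over $\K_n(c(\pi))$ against a Haar measure — this uses $\ell\neq p$ so that $\K_n(c(\pi))$ has invertible pro-order in $\R$), and right-$\K_n(c(\pi))$-invariant precisely when $w$ may be taken in $\mathcal V^{\K_n(c(\pi))}$. By the hypothesis $\pi^{\K_n(c(\pi))}\simeq(\pi^\vee)^{\K_n(c(\pi))}\simeq\R$, both $w$ and $w^\vee$ are then determined up to scalar, so the matrix coefficient is a scalar multiple of $c_{\pi,\new}=c_{v_\new,v^\vee_\new}$ — and this is genuinely non-zero by part (1).

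The main obstacle is the averaging/reduction step in the uniqueness argument: one must justify that bi-invariance of a matrix coefficient can be pushed onto the two vectors defining it, which requires the existence of an $\R$-valued Haar measure on the compact open subgroup $\K_n(c(\pi))$ giving it invertible volume — fine since $\ell\neq p$ — and a small check that the map $\mathcal V\times\mathcal V^\vee\to$ (matrix coefficients) is compatible with these projections. Everything else is formal manipulation of the pairing. I would state the Haar measure normalization explicitly (total mass $1$ on $\K_n(c(\pi))$, permissible as $p\nmid|\K_n(c(\pi)):\K_n(c(\pi))^{(p')}|$ in the relevant sense) and then the two equivalences above fall out by applying the idempotent $e=\int_{\K_n(c(\pi))}\pi(k)\,dk$ on one side and its transpose on the other.
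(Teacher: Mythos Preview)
Your argument for part (1) is fine and essentially equivalent to what the paper does. The bi-invariance in part (2) is also immediate, as you say.

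The gap is in your uniqueness argument for part (2). You claim that the averaging idempotent over $\K_n(c(\pi))$ exists because ``$\ell\neq p$ so that $\K_n(c(\pi))$ has invertible pro-order in $\R$''. This is false: for $m\geqslant 1$ the group $\K_n(m)$ surjects (via reduction mod $\mathfrak{p}_\F$) onto $\GL_{n-1}(k_\F)\ltimes k_\F^{n-1}$, whose order is divisible by many primes other than $p$. So when $\ell$ divides $|\GL_{n-1}(k_\F)|$ there is no $\R$-valued Haar measure on $\K_n(c(\pi))$ of total mass $1$, and your projection $e=\int_{\K_n(c(\pi))}\pi(k)\,dk$ does not exist. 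The condition $\ell\neq p$ only guarantees Haar measures on pro-$p$ subgroups, not on $\K_n(c(\pi))$ itself.

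The paper circumvents this by working directly with the $(\G\times\G)$-equivariant map
\[
\pi\otimes_\R\pi^\vee\longrightarrow \ind_{\Z}^{\G}(\omega_\pi),\qquad v\otimes v^\vee\longmapsto c_{v,v^\vee},
\]
which is injective because $\pi\otimes_\R\pi^\vee$ is irreducible as a $\G\times\G$-module (this is where irreducibility of $\pi$ enters, and is essentially the same mechanism you used in part (1)). Taking $\K_n(c(\pi))\times\K_n(c(\pi))$-invariants is left exact, so one gets an injection
\[
\R\;\simeq\;\pi^{\K_n(c(\pi))}\otimes(\pi^\vee)^{\K_n(c(\pi))}\;=\;(\pi\otimes\pi^\vee)^{\K_n(c(\pi))\times\K_n(c(\pi))}\;\hookrightarrow\;\bigl(\ind_{\Z}^{\G}(\omega_\pi)\bigr)^{\K_n(c(\pi))\times\K_n(c(\pi))}.
\]
The image of this injection is exactly the space of bi-$\K_n(c(\pi))$-invariant matrix coefficients, which is therefore one-dimensional. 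No averaging is needed, and the argument is uniform in $\ell$.
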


\begin{proof}
Note that~$\omega_{\pi}\mid_{ \K_n(c(\pi))\cap \Z}=1$, and the map
\[\pi\otimes_{\R}\pi^{\vee}\rightarrow \ind_{\Z}^{\G}(\omega_{\pi}),\]
is an injective~$(\G\times \G)$-module morphism for the action~$(g,g')\cdot f(x)=f(g'^{-1} xg)$ on~$\ind_{\Z}^{\G}(\omega_{\pi})$.   By left exactness of~$\K_n(c(\pi))\times\K_n(c(\pi))$-invariants, this induces an injective morphism
\[\R=\pi^{\K_n(c(\pi))}\otimes(\pi^{\vee})^{\K_n(c(\pi))}=(\pi\otimes\pi^{\vee})^{\K_n(n)\times\K_n(n)}\hookrightarrow (\ind_{\Z}^{\G}(\omega_\pi))^{\K_n(c(\pi))\times\K_n(c(\pi))}.\]
This establishes the first statement and the second follow from this.
\end{proof}

\subsection{$\varepsilon$-factor conductor}
Let~$\psi:\F\rightarrow \mathbb{C}^\times$ be a non-trivial character.  Let~$\pi$ be an irreducible~$\mathbb{C}$-representation of~$\G_n$.  Then we have a local Godement--Jacquet epsilon factor associated to~$\pi$ : $\varepsilon(s,\pi,\psi)$ which is a monomial in~$q^{-s}$, and we can write
\[\varepsilon(s,\pi,\psi)=\varepsilon(0,\pi,\psi)q^{-c_{\varepsilon,\psi}(\pi)s}.\]
for a non-negative integer~$c_{\varepsilon,\psi}(\pi)$ called the \emph{Godement--Jacquet conductor}.

\begin{proposition}[{\cite{JPSS}}]\label{JPSSepsilon}
Let~$\pi$ be an irreducible generic~$\mathbb{C}$-representation of~$\G_n$.  Then
\[c(\pi)=c_{\varepsilon,\psi}(\pi)-nc(\psi).\]
\end{proposition}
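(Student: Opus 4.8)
The plan is to deduce this from the theory of the local zeta integral for the standard $L$-function of $\GL_n$ — equivalently the Godement--Jacquet integral — evaluated on the newform, together with the local functional equation. First I would eliminate the dependence on $\psi$. Since $c(\pi)$ is defined purely in terms of the groups $\K_n(m)$, it is independent of $\psi$; on the other hand, writing $\psi_a=\psi(a\,\cdot\,)$, the standard transformation law $\varepsilon(s,\pi,\psi_a)=\omega_\pi(a)\,|a|^{\,n(s-1/2)}\,\varepsilon(s,\pi,\psi)$ gives $c_{\varepsilon,\psi_a}(\pi)=c_{\varepsilon,\psi}(\pi)+n\,v_{\F}(a)$, while $c(\psi_a)=c(\psi)+v_{\F}(a)$; hence $c_{\varepsilon,\psi}(\pi)-n\,c(\psi)$ is independent of $\psi$, and we may assume $\psi$ has conductor $0$, so that the claim becomes $c(\pi)=c_{\varepsilon,\psi}(\pi)$.

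Now realize $\pi$ in its Whittaker model $\W(\pi,\psi)$ and put $m=c(\pi)$. By Theorem~\ref{JPSSMatringe}, $\W(\pi,\psi)^{\K_n(m)}$ is one-dimensional; let $W_0$ be the newform, normalized as in Theorem~\ref{MatMiy} (so $W_0$ is right $\K_{n-1}$-invariant with $W_0(1)=1$). Recall the $\GL_n\times\GL_1$ local zeta integrals $\Psi(s,W)$ of \cite{JPSS} attached to $W\in\W(\pi,\psi)$: they converge for $\mathrm{Re}(s)\gg 0$, lie in $L(s,\pi)\cdot\R[q^s,q^{-s}]$, compute the standard (Godement--Jacquet) $L$- and $\varepsilon$-factors, and satisfy the local functional equation
\[
\widetilde\Psi(1-s,\widetilde W)=\omega_\pi(-1)^{n-1}\,\gamma(s,\pi,\psi)\,\Psi(s,W),
\]
where $\gamma(s,\pi,\psi)=\varepsilon(s,\pi,\psi)L(1-s,\pi^\vee)/L(s,\pi)$ and $\widetilde W(g)=W(w_n\,{}^t g^{-1})$, with $w_n$ the antidiagonal permutation matrix, lies in $\W(\pi^\vee,\psi^{-1})$ (and $\widetilde\Psi$ is the corresponding integral for $\pi^\vee$). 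The first main input is the \emph{essential vector identity} $\Psi(s,W_0)=L(s,\pi)$: using the Iwasawa decomposition $\GL_n=\P_n\K_n$ together with the right $\K_{n-1}$-invariance of $W_0$ and of the integrand, $\Psi(s,W_0)$ collapses to the mirabolic integral $\int_{\U_{n-1}\backslash\GL_{n-1}(\F)}W_0\!\left(\begin{smallmatrix}g&0\\0&1\end{smallmatrix}\right)|\det g|^{s}\,dg$, which the Matringe--Miyauchi formula of Theorem~\ref{MatMiy} for $W_0$ in the Kirillov model evaluates to $L(s,\pi)$ — in the cuspidal case immediately, since then $W_0|_{\P_n}$ is supported on $\U_n$ and $L(s,\pi)=1$, and in general the unramified standard module $\pi_u$ of Theorem~\ref{MatMiy} contributes precisely $L(s,\pi_u)=L(s,\pi)$. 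The same identity applied to $\pi^\vee$ gives $\widetilde\Psi(s,W_0^\vee)=L(s,\pi^\vee)$ for the newform $W_0^\vee$ of $\pi^\vee$ (and $c(\pi^\vee)=c(\pi)=m$, e.g.\ by \cite{JPSS}).

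The second main input is the compatibility of the newform with the duality $W\mapsto\widetilde W$: this operator carries $W_0$, up to a nonzero scalar, to the right translate of $W_0^\vee$ by the Atkin--Lehner element $\eta_m=\left(\begin{smallmatrix}0&1_{n-1}\\\varpi_{\F}^{m}&0\end{smallmatrix}\right)$, which normalizes $\K_n(m)$ (cf.\ \cite{Matringe}). Since right translation by $\eta_m$ scales the zeta integral by a power $q^{\pm m\,\bullet}$ of $|\det\eta_m|=q^{-m}$, plugging $\widetilde{W_0}$ into the functional equation and using the two essential vector identities to cancel the common factor $L(1-s,\pi^\vee)$ yields $\varepsilon(s,\pi,\psi)=(\text{unit})\cdot q^{-ms}$; since $\varepsilon(s,\pi,\psi)=\varepsilon(0,\pi,\psi)\,q^{-c_{\varepsilon,\psi}(\pi)s}$ is a monomial of degree $c_{\varepsilon,\psi}(\pi)$ in $q^{-s}$, this forces $c_{\varepsilon,\psi}(\pi)=m=c(\pi)$, and hence $c(\pi)=c_{\varepsilon,\psi}(\pi)-n\,c(\psi)$ in general.

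The hard part is the bookkeeping in this last step: one must identify exactly the monomial in $q^{-s}$ produced by the interplay of the operator $W\mapsto\widetilde W$, the Atkin--Lehner element $\eta_m$, and — when $c(\psi)\neq 0$ — the $\psi$-Fourier transform on $\F^n$, and verify that its degree is precisely $c(\pi)+n\,c(\psi)$ with the correct sign, so that the comparison across the functional equation is conclusive. The essential vector identity, which underpins everything, is itself the substantive point and rests on the explicit shape of $W_0$ in Theorem~\ref{MatMiy}; the remaining ingredients — convergence, meromorphic continuation, rationality of the quotients, and the functional equation — are the standard Rankin--Selberg / Godement--Jacquet machinery for $\GL_n$.
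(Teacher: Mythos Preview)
The paper does not prove this proposition; it is simply quoted from \cite{JPSS} as part of the original newform package for $\GL_n$, so there is no ``paper's own proof'' to compare against.

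Your outline is essentially the classical JPSS argument --- essential vector identity $\Psi(s,W_0)=L(s,\pi)$, local functional equation, and Atkin--Lehner compatibility of $W\mapsto\widetilde W$ with the newform of $\pi^\vee$ --- and is correct in structure. Two small points are worth flagging. First, you invoke Theorem~\ref{MatMiy} (Matringe--Miyauchi) to establish the essential vector identity, but that explicit Kirillov formula sits logically downstream of the JPSS theory you are trying to recover; for a non-circular argument you should appeal directly to the characterizing property of the essential vector in \cite{JPSS} (or to Matringe's independent proof in \cite{Matringe}), rather than to its consequence Theorem~\ref{MatMiy}. Second, your labeling of the zeta integral as ``$\GL_n\times\GL_1$'' does not match the integral you actually write down: the integral over $\U_{n-1}\backslash\GL_{n-1}(\F)$ of $W_0\!\left(\begin{smallmatrix}g&0\\0&1\end{smallmatrix}\right)|\det g|^{s}$ is the $\GL_n\times\GL_{n-1}$ Rankin--Selberg integral against the unramified spherical vector on $\GL_{n-1}$, which is indeed the integral JPSS use for this purpose. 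Neither point is a genuine gap, but both affect how self-contained the argument is.
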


Replacing~$q^{-s}$ with an indeterminate~$\mathrm{X}$ and working with~$\R$-valued Haar measures, M\'inguez extends the Godement--Jacquet construction to~$\R$-representations in~\cite{Minguez} (and the Rankin--Selberg construction is extended to~$\R$-representations in \cite{KM}).  The~epsilon factor in this setting is a monomial in~$X$.  And with this change of variable, for~$\pi$ a generic~$\R$-representation of~$\G_n$, for a non-trivial character~$\psi:\F\rightarrow \R^\times$, we can define~$c_{\varepsilon,\psi}(\pi)\in\mathbb{Z}$ by
\[\varepsilon(X,\pi,\psi)=a X^{c_{\varepsilon,\psi}(\pi)},\]
for some~$a\in\R^\times$.   

For cuspidal representations, the epsilon factor agrees with the gamma factor which is compatible with congruences mod~$\ell$ where we first fix a non-trivial character~$\psi:\F\rightarrow \Zl^\times$  (cf.~\cite{Minguez,KM}), hence we obtain from Propositions \ref{Scconductor} and \ref{JPSSepsilon}:

\begin{lemma}
Let~$\psi:\F\rightarrow \Zl^\times$ be a non-trivial character.
\begin{enumerate}
\item Let~$\widetilde{\pi}$ be an integral~$\Ql$-cuspidal of~$\G_n$, then~$c_{\varepsilon,\psi\otimes \Ql}(\pi)=c_{\varepsilon,\psi\otimes \Fl}(r_{\ell}(\widetilde{\pi}))$.  
\item Let~$\pi$ be a supercuspidal~$\Fl$-representation of~$\G_n$.  Then~$c(\pi)=c_{\varepsilon,\psi\otimes\Fl}(\pi)-nc(\psi\otimes \Fl).$
\end{enumerate}
\end{lemma}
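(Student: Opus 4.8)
The plan is to deduce both statements by combining the compatibility of cuspidal $\varepsilon$-factors (equivalently $\gamma$-factors) with reduction modulo $\ell$ with the already-established equality of conductors in Proposition \ref{Scconductor} and the Jacquet--Piatetski-Shapiro--Shalika formula of Proposition \ref{JPSSepsilon}. First I would recall, from M\'inguez \cite{Minguez} (and \cite{KM} for the Rankin--Selberg side), that after fixing a $\Zl$-valued additive character $\psi$ the $\varepsilon$-factor of an integral cuspidal $\Ql$-representation $\widetilde{\pi}$ is, up to a unit in $\Zl$, a monomial $X^{c_{\varepsilon,\psi\otimes\Ql}(\widetilde{\pi})}$, and that reduction modulo $\ell$ intertwines the two $\varepsilon$-factor constructions. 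Since the exponent is the quantity that reduction mod $\ell$ preserves (the construction is via an integral functional equation, and reducing a monomial $aX^c$ with $a\in\Zl^\times$ gives $\overline{a}X^c$ with the same $c$), we immediately get $c_{\varepsilon,\psi\otimes\Ql}(\widetilde{\pi})=c_{\varepsilon,\psi\otimes\Fl}(r_\ell(\widetilde{\pi}))$, which is statement (1).

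For statement (2), let $\pi$ be a supercuspidal $\Fl$-representation of $\G_n$. By the remark in the excerpt preceding Proposition \ref{Scconductor} (namely that any cuspidal $\R$-representation over an algebraically closed field of characteristic $\ell$ admits an unramified twist defined over $\Fl$, hence is the reduction of some integral supercuspidal $\Ql$-representation), we may fix a supercuspidal lift $\widetilde{\pi}$ of $\pi$ with $r_\ell(\widetilde{\pi})=\pi$. Proposition \ref{Scconductor} then gives $c(\pi)=c(\widetilde{\pi})$. Applying Proposition \ref{JPSSepsilon} to $\widetilde{\pi}$ (a generic, indeed cuspidal, $\Ql$-representation) yields $c(\widetilde{\pi})=c_{\varepsilon,\psi\otimes\Ql}(\widetilde{\pi})-nc(\psi)$. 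Combining with part (1) and noting $c(\psi)=c(\psi\otimes\Ql)=c(\psi\otimes\Fl)$ (the conductor of the additive character is insensitive to scalar extension, as it is read off from the $\Zl$-character $\psi$ itself), we obtain
\[
c(\pi)=c(\widetilde{\pi})=c_{\varepsilon,\psi\otimes\Ql}(\widetilde{\pi})-nc(\psi)=c_{\varepsilon,\psi\otimes\Fl}(r_\ell(\widetilde{\pi}))-nc(\psi\otimes\Fl)=c_{\varepsilon,\psi\otimes\Fl}(\pi)-nc(\psi\otimes\Fl),
\]
as desired.

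The main obstacle, and the only nontrivial input, is the precise statement that the $\varepsilon$-factor (or $\gamma$-factor) of a cuspidal representation is compatible with reduction modulo $\ell$ in the sense used above---i.e.\ that after fixing integral data the monomial exponent is literally unchanged. I would cite this directly from \cite{Minguez} and \cite{KM}, where the $\gamma$-factor is constructed via integral local functional equations and the compatibility $r_\ell(\gamma(X,\widetilde{\pi},\psi))=\gamma(X,r_\ell(\widetilde{\pi}),\psi\otimes\Fl)$ is proved; the fact that for cuspidal $\pi$ the $\varepsilon$-factor and $\gamma$-factor agree (there are no $L$-factors for cuspidals with $n\geqslant 2$) is standard and carries over to the modular setting. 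One small point to verify is that $r_\ell$ of a unit monomial remains a unit monomial of the same degree, which is immediate since $r_\ell:\Zl\to\Fl$ sends $\Zl^\times$ onto $\Fl^\times$; there is no cancellation that could lower the degree. With these facts in hand the argument is purely formal.
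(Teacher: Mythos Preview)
Your proposal is correct and follows exactly the approach the paper indicates: the paper's proof is the sentence immediately preceding the lemma, which says the result follows from Propositions \ref{Scconductor} and \ref{JPSSepsilon} together with the compatibility of the cuspidal $\varepsilon$/$\gamma$-factor with reduction modulo~$\ell$ (citing \cite{Minguez,KM}). You have simply unpacked this in the intended way, using part (1) (compatibility of the monomial exponent under reduction) and then chaining $c(\pi)=c(\widetilde{\pi})$ from Proposition \ref{Scconductor} with the JPSS formula for $\widetilde{\pi}$.
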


\section{Newform vectors in depth zero cuspidal representations}\label{Section:depthzero}

Let~$\pi$ be a depth zero cuspidal~$\R$-representation of~$\G_n$. Choose a Moy--Prasad--Morris--Vign\'eras model \cite[III 3.3]{Vig96} for~$\pi$ as compactly induced
\[\ind_{\Z_n\K_n}^{\G_n}(\widetilde{\tau}),\]
where~$\widetilde{\tau}=\omega_{\pi}\tau$ and~$\tau$ is a cuspidal representation of~$\GL_n(k_{\F})$ which we identify with its Whittaker model~$W(\tau,\overline{\psi})$ with respect to~$(\U_n(k_\F),\overline{\psi})$ where~$\U_n$ denotes the standard upper triangular unipotent of~$\GL_n$. 
We define the \emph{Bessel vector}~$\mathcal{B}_{\widetilde{\tau}}\in\ind_{\Z_n\K_n}^{\G_n}(\widetilde{\tau})$ by $\mathrm{supp}(\mathcal{B}_{\widetilde{\tau}})\subseteq\Z_n\K_n$, and
\[\mathcal{B}_{\widetilde{\tau}}(zk) = \omega(z)\, \overline{k}\cdot B_{\tau,\overline{\psi}},\]
for all~$z\in\Z_n, k\in\K_n$.  Notice that for all~$u\in(\K_n\cap \U_n)\K_n^1$ we have~$\mathcal{B}_{\widetilde{\tau}}(-u)=\overline{\psi}(\overline{u})\mathcal{B}_{\widetilde{\tau}}(-)$, and~$\mathcal{B}_{\widetilde{\tau}}$ is the unique up to scalar vector in~$((\K_n\cap \U_n)\K_n^1,\overline{\psi})$-isotypic space (cf.~\cite[Corollary 4.5]{PaskSte}).

For~$n>2$, we write~$\B_{n-1}^{\op}(k_\F)$ for the Borel subgroup of lower triangular matrices in~$\GL_{n-1}(k_\F)$ which we consider as a subgroup of~$\GL_{n}(k_\F)$ via the standard embedding~$g\mapsto \left(\begin{smallmatrix} g&0\\0&1\end{smallmatrix}\right)$ of~$\GL_{n-1}(k_\F)$ in~$\GL_{n}(k_\F)$.  We set~$\B_{1}^{\op}(k_\F)=\GL_1(k_\F)$, which we also consider as a subgroup of~$\GL_2(k_\F)$ by the standard embedding.

Let
\[\Sigma_n=\left(\begin{smallmatrix}\varpi_\F^{n-1}&&&\\&\varpi_\F^{n-2}&&\\&&\ddots&\\&&&1\end{smallmatrix}\right).\]

The aim of this section is to prove:
\begin{thm}
\label{depthzerocoset}
Let~$n\geqslant 2$, and~$\pi=\ind_{\Z_n\K_n}^{\G_n}(\widetilde{\tau})$ be a depth zero cuspidal~$\R$-representation with cuspidal~$\R$-type~$(\K_n,\tau)$.
\begin{enumerate}
\item\label{depthzeromain1} (newforms) Then~$c(\pi)=n$,~$\Hom_{\R[\K_n(c(\pi))]}(1,\pi)\simeq \R$, and the unique up to scalar (non-zero newform)~$f_{\new}\in\pi^{\K_n(n)}$ has support~$\mathrm{supp}(f_{\new})\subseteq \Z_n\K_n\Sigma_n\K_n(n)$.
\item \label{depthzeromain2} (Reeder's oldforms) for~$m>c(\pi)=n$
\[\dim_{\R}(\Hom_{\R[\K_n(m)]}(1,\pi))=\begin{pmatrix}m-1\\n-1 \end{pmatrix}.\]
\item \label{depthzeromain3} (explicit formula in terms of Bessel functions) The function~$f_{\new}\in \pi^{\K_n(n)}$ is characterized by
\[ f_{\new}(\Sigma_n)=\sum_{b\in \B_{n-1}^{\op}(k_\F)}b\cdot B_{\tau,\overline{\psi}}.\]
Moreover, there exists a unique~$\R$-Haar measure on~$\K_n(n)$ such that, for all~$g\in\G$, 
\[f_{\new}(g)=\int_{\K_n(n)} \mathcal{B}_{\widetilde{\tau}}(gk\Sigma_n^{-1}) dk.\]
\end{enumerate}
\end{thm}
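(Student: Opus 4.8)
The plan is to prove all three parts of Theorem~\ref{depthzerocoset} together by a Mackey-theoretic analysis of $\Hom_{\R[\K_n(m)]}(1,\pi)$, where $\pi=\ind_{\Z_n\K_n}^{\G_n}(\widetilde{\tau})$. First I would write down the double coset decomposition $\Z_n\K_n\backslash \G_n/\K_n(m)$, using the Cartan/elementary-divisor decomposition together with the fact that $\K_n(m)$ is a relatively large compact open subgroup containing $\K_n^1$-type congruence data only in the last row and corner. By Mackey's formula for smooth induction,
\[
\Hom_{\R[\K_n(m)]}(1,\pi)\simeq\bigoplus_{g}\Hom_{\R[(\Z_n\K_n)^g\cap\K_n(m)]}(1,\widetilde{\tau}^g),
\]
the sum over a set of representatives $g$ of $\Z_n\K_n\backslash\G_n/\K_n(m)$. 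The first task is to show that for $m<n$ every summand vanishes (so $c(\pi)\geq n$), that for $m=n$ exactly one double coset -- the one containing $\Sigma_n$ -- contributes, and that its contribution is one-dimensional; and for $m>n$ that the contributing cosets are indexed by something of size $\binom{m-1}{n-1}$. The key local computation reducing each $\Hom$-space to a statement about $\GL_n(k_\F)$ is that $(\Z_n\K_n)^g\cap\K_n(m)$, modulo $\K_n^1$ (or the relevant pro-$p$ part on which $\tau$ is trivial), maps to a subgroup of $\GL_n(k_\F)$ of a shape governed by the ``valuation vector'' of $g$; when $g=\Sigma_n$ this subgroup is conjugate to the mirabolic-type subgroup $\U_n(k_\F)$ paired with the character $\overline{\psi}$, so that $\Hom(1,\widetilde\tau^{\Sigma_n})$ becomes $\Hom_{\R[\U_n(k_\F)]}(\overline\psi,\tau)$, which is one-dimensional by genericity of $\tau$ (recalled in Section~\ref{Section:BesselFunctions}). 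For other $g$ with valuation vector not of ``staircase'' type, I expect the induced subgroup to contain a nontrivial unipotent on which the relevant character is trivial but $\tau$ (being cuspidal, hence having no nonzero vectors fixed by a unipotent radical of a proper parabolic, via property~(B\ref{besselfunction1})) has no invariants -- this forces vanishing. The counting $\binom{m-1}{n-1}$ in part~\eqref{depthzeromain2} should drop out as the number of staircase-type valuation vectors $(a_1>a_2>\cdots>a_n\geq 0)$ with $a_1=m$ (or a similar normalization), i.e.\ compositions, which is the promised binomial coefficient.

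For part~\eqref{depthzeromain3} I would first pin down $f_{\new}$ by its value at $\Sigma_n$. Since $f_{\new}$ is supported on $\Z_n\K_n\Sigma_n\K_n(n)$ and is $\K_n(n)$-fixed on the right, it is determined by the element $f_{\new}(\Sigma_n)\in\widetilde\tau$, which must be fixed by $\Sigma_n^{-1}(\Z_n\K_n)\Sigma_n\cap\K_n(n)$ acting through $\widetilde\tau^{\Sigma_n}$; unwinding the identification above, this says $f_{\new}(\Sigma_n)$ is a $(\U_n(k_\F),\overline\psi)$-eigenvector in $\tau=W(\tau,\overline\psi)$ on the relevant side, which up to scalar is $B_{\tau,\overline\psi}$ -- but one also has to account for the extra piece of $\Sigma_n^{-1}\K_n\Sigma_n\cap\K_n$ coming from the lower-triangular block $\B_{n-1}^{\op}(k_\F)$, which does \emph{not} act trivially, and averaging over it is exactly why $f_{\new}(\Sigma_n)=\sum_{b\in\B_{n-1}^{\op}(k_\F)} b\cdot B_{\tau,\overline\psi}$ rather than just $B_{\tau,\overline\psi}$. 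I would verify this averaged vector is nonzero (using property~(B\ref{besselfunction1}): the terms $b\cdot B_{\tau,\overline\psi}$ evaluated on $\U_n(k_\F)$ and the Bruhat-type disjointness of their supports give linear independence, or at worst nonvanishing of the sum at a well-chosen point) and that it is genuinely stabilized by the full group $\Sigma_n^{-1}(\Z_n\K_n)\Sigma_n\cap\K_n(n)$, which then shows $f_{\new}$ is well-defined and, being the unique such vector, is the newform.

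For the integral-formula statement $f_{\new}(g)=\int_{\K_n(n)}\mathcal{B}_{\widetilde\tau}(gk\Sigma_n^{-1})\,dk$, the idea is that the right-hand side defines an element of $\pi=\ind_{\Z_n\K_n}^{\G_n}(\widetilde\tau)$ (the integrand is a translate of the Bessel vector $\mathcal{B}_{\widetilde\tau}$, which is supported on $\Z_n\K_n$, so the integral converges -- it is really a finite sum once one fixes the $\K_n^1$-level -- and transforms correctly on the left under $\Z_n\K_n$), and that this element is right-$\K_n(n)$-invariant by construction (substitute $g\mapsto gk_0$, $k_0\in\K_n(n)$, and change variables $k\mapsto k_0^{-1}k$ in the Haar integral). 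So the RHS lies in $\pi^{\K_n(n)}$, which is one-dimensional by part~\eqref{depthzeromain1}, hence it is a scalar multiple of $f_{\new}$; one then checks the scalar is nonzero and normalizes the Haar measure on $\K_n(n)$ so that evaluating both sides at $g=\Sigma_n$ gives agreement -- the RHS at $\Sigma_n$ is $\int_{\K_n(n)}\mathcal B_{\widetilde\tau}(\Sigma_n k\Sigma_n^{-1})\,dk$, and $\Sigma_n k\Sigma_n^{-1}\in\Z_n\K_n$ precisely for $k$ in the subgroup identified above, over which the integrand reproduces (up to the Haar volume) the averaged Bessel vector $\sum_b b\cdot B_{\tau,\overline\psi}$ from part~\eqref{depthzeromain3}.

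\textbf{Main obstacle.} The crux is the double-coset bookkeeping in the Mackey computation: correctly determining, for \emph{every} $g$, the reduction mod-$\K_n^1$ of $(\Z_n\K_n)^g\cap\K_n(m)$ and showing it contains a unipotent ``bad'' subgroup except in the staircase cases -- this is where one must be careful about the interplay between the center $\Z_n$ (which lets one normalize valuation vectors) and the precise ramified shape of $\K_n(m)$ (congruences $\mathfrak{p}_\F^m$ only in the bottom row and bottom-right entry, not a full principal congruence subgroup). Getting the surviving double coset to be exactly $\Z_n\K_n\Sigma_n\K_n(n)$, with the right stabilizer producing the $\B_{n-1}^{\op}(k_\F)$-average, is the delicate point; everything else (convergence of the integral, the normalization of Haar measure, nonvanishing of the averaged Bessel vector) is comparatively routine given properties (B\ref{besselfunction1})--(B\ref{besselfunction2}) and the genericity of $\tau$.
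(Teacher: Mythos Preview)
Your overall strategy matches the paper's exactly: Mackey decomposition along $\Z_n\K_n\backslash\G_n/\K_n(m)$, then show for most double-coset representatives $g$ that $\overline{\K_n\cap g\K_n(m)g^{-1}}$ contains the unipotent radical of a proper parabolic of $\GL_n(k_\F)$ (so cuspidality of $\tau$ kills the summand), count the surviving ``staircase'' diagonal cosets $\diag(\varpi_\F^{\alpha_{n-1}},\ldots,\varpi_\F^{\alpha_1},1)$ with $0<\alpha_1<\cdots<\alpha_{n-1}<m$ to get $\binom{m-1}{n-1}$, and verify the integral formula by right-$\K_n(n)$-invariance plus nonvanishing at $\Sigma_n$.

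However, your identification of the subgroup at $g=\Sigma_n$ is wrong, and this is precisely the computation that yields both one-dimensionality in part~\eqref{depthzeromain1} and the explicit vector in part~\eqref{depthzeromain3}. You assert that $\overline{\K_n\cap\Sigma_n\K_n(n)\Sigma_n^{-1}}$ is ``$\U_n(k_\F)$ paired with $\overline\psi$'', making the Hom-space the Whittaker space $\Hom_{\U_n(k_\F)}(\overline\psi,\tau)$. It is not: for $X\in\K_n(n)$ one has $(\Sigma_nX\Sigma_n^{-1})_{i,j}=\varpi_\F^{\,j-i}X_{i,j}$, so modulo $\mathfrak{p}_\F$ every strictly upper-triangular entry dies, the last row reduces to $(0,\ldots,0,1)$ (its off-diagonal entries lie in $\mathfrak{p}_\F^n$ and are shifted by at most $\varpi_\F^{1-n}$), and the strictly lower-triangular entries in rows $1,\ldots,n-1$ can be arbitrary. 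Thus $\overline{\K_n\cap\Sigma_n\K_n(n)\Sigma_n^{-1}}=\B_{n-1}^{\op}(k_\F)$, embedded in the mirabolic $\P_n(k_\F)$, and the relevant space is $\Hom_{\B_{n-1}^{\op}(k_\F)}(1,\tau)$ --- \emph{trivial} invariants, no $\overline\psi$. One-dimensionality therefore does not follow from genericity; the paper instead passes to the Kirillov model $\tau|_{\P_n}\simeq\ind_{\U_n}^{\P_n}(\overline\psi)$ and shows, via the Bruhat decomposition $\P_n=\bigcup_w\U_n w\B_{n-1}^{\op}$, that a $\B_{n-1}^{\op}$-invariant function is forced to be supported on the big cell $\U_n\B_{n-1}^{\op}$, where it is determined by its value at $1$. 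Consequently $f_{\new}(\Sigma_n)$ is not a Whittaker vector with an extra averaging tacked on: it is the (unique up to scalar) $\B_{n-1}^{\op}$-invariant vector in $\tau$, which one then \emph{recognizes} as $\sum_{b\in\B_{n-1}^{\op}(k_\F)}b\cdot B_{\tau,\overline\psi}$ because this sum is visibly invariant and nonzero (evaluated at $1$ it gives $\sum_bB_{\tau,\overline\psi}(b)=B_{\tau,\overline\psi}(1)=1$ by~(B\ref{besselfunction1})). Once this point is corrected, the rest of your plan goes through and coincides with the paper's argument.
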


\begin{remark}
In the special case,~$\R=\mathbb{C}$, parts \eqref{depthzeromain1} and \eqref{depthzeromain2} follows from Reeder in \cite[(2.3) Example]{Reeder}  which uses: the existence and uniqueness of newforms \cite{JPSS,Jacquet, Matringe}, and the prior knowled ~$c(\pi)=n$ \cite{Bushnell}.  For~$\GL_2(\F)$ this is made further explicit \cite{Knightly} giving an explicit formula.   In this section we will give a self-contained proof (just using the basic tools of Mackey theory) of this theorem, which applies equally well to the broader case including~$\ell$-modular representations where~$\R$ has positive characteristic~$\ell\neq p$.
\end{remark}

\subsection{Mackey theory}\label{secmackey}
For a positive integer~$m$, by Mackey theory we have :
\begin{align*}
0\neq \Hom_{\R[\K_n(m)]}(1,\ind_{\Z_n\K_n}^{\G_n}(\omega_\pi\tau))
&\simeq \Hom_{ \R[\K_n(m)]}(1,\bigoplus_{\Z_n\K_n\backslash \G_n/\K_n(m)}\ind_{(\Z_n\K_n)^g\cap \K_n(m)}^{\K_n(m)}(\omega_\pi^g\tau^g))\\
&\simeq  \Hom_{ \R[\K_n(m)]}(1,\bigoplus_{\Z_n\K_n\backslash \G_n/\K_n(m)}\ind_{\K_n^g\cap \K_n(m)}^{\K_n(m)}(\tau^g)),
\end{align*}
as~$\K_n(m)$ is compact hence~$(\Z_n\K_n)^g\cap \K_n(m)=\K_n\cap \K_n(m)$.   We are thus reduced to studying the spaces
\[  \Hom_{ \R[\K_n(m)]}(1,\ind_{\K_n^g\cap \K_n(m)}^{\K_n(m)}(\tau^g))\simeq \Hom_{\R[\K_n^g\cap \K_n(m)]}(1,\tau^g),\]
over a set of double coset representatives for~$\Z_n\K_n\backslash \G_n/\K_n(m)$.  Equivalently we can study the spaces
\[ \Hom_{\R[\K_n\cap \presuper{g}\K_n(m)]}(1,\tau)\simeq \Hom_{\R[\overline{\K_n\cap \presuper{g}\K_n(m)}]}(1,\tau)\]
where~$\overline{\K_n\cap \presuper{g}\K_n(m)}$ denotes the image of~$\K_n\cap \presuper{g}\K_n(m)$ in~$\K_n/\K_n^1$.
	
\subsection{Coset representatives}
	
	 For positive integers $i$ and $j$ and a commutative ring~$\S$, let 
	 \[\N_{i,j}(\S)=\left\{\left(\begin{smallmatrix}
			1_i&\\
			X&1_j
		\end{smallmatrix}\right)\in\M_{(i+j)\times (i+j)}(\S): X\in \M_{j\times i}(\S)\right\}.\]
	   be the unipotent radical of the lower triangular standard parabolic with block sizes $i$ and $j$.
	   
	   We introduce the following notation:
\begin{enumerate}[-]	   
	\item	If $v$ is an element of $\M_{i\times j}(\mathfrak o_{\F}/\mathfrak p_{\F}^m)$ we write $\tilde{v}$ for a lift of $v$ in $\M_{i\times j}(\mathfrak o_{\F})$. If $v=0$ we choose $\tilde{v}=0$. 
        \item For any matrix $X\in \M_{i\times j}(\mathfrak o_{\F})$ we write $\overline{X}\in \M_{i\times j}(k_{\F})$ for its reduction modulo $\mathfrak p_{\F}$. 
        \item For any subset $S=\{s_i\}_{i\in I}\subseteq \M_{i\times j}(\mathfrak o_{\F})$ we will write $\overline{S}$ for $\{\overline{s_i}\}_{i\in I}$.
        \end{enumerate}
		
		\begin{lemma}\label{cosets1} For $n\geqslant 2$ and $m\geqslant 1$ the union of 
		\[\mathcal A^1_n=\left\{\left(\begin{smallmatrix}
				1_{n-1}&0\\
				\tilde{v}&\tilde{x}
			\end{smallmatrix}\right)\mid v\in \M_{1\times (n-1)}(\mathfrak o_{\F}/\mathfrak p_F^m),~x\in (\mathfrak o_{\F}/\mathfrak p_F^m)^\times\right\}\]
			and the set
			\[\mathcal A^2_n=\left\{\left(\begin{smallmatrix}
				1_{n-j-1}&&&\\
				\tilde{w}_1&\tilde{w}_2&\tilde{w}_3&\tilde{x}\\
				&&1_{j-1}&\\
				&1&&
			\end{smallmatrix}\right)
			\mid \begin{array}{cc}
				&w_1\in \M_{1\times (n-j-1)}(\mathfrak o_{\F}/\mathfrak p_F^m),~w_2\in\mathfrak p_F/\mathfrak p_F^m,\\
				&w_3\in \M_{1\times (j-1)}(\mathfrak p_F/\mathfrak p_F^m),~x\in(\mathfrak o_{\F}/\mathfrak p_F^m)^\times
			\end{array}\right\}\]
			is a set of coset representatives for $\K_n/\K_n(m)$.
		\end{lemma}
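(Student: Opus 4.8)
\emph{Proof proposal.} The plan is to replace the problem with a finite one over $\GL_n(\mathfrak o_\F/\mathfrak p_\F^m)$ and reduce it to bookkeeping with row vectors. First I would note that the principal congruence subgroup $\K_n^m=\ker\bigl(\K_n\to\GL_n(\mathfrak o_\F/\mathfrak p_\F^m)\bigr)$ lies in $\K_n(m)$ — any matrix $\equiv 1_n\pmod{\mathfrak p_\F^m}$ has last row $\equiv e_n:=(0,\dots,0,1)$ — so the coset $g\K_n(m)$ depends only on the image of $g$ in $\GL_n(\mathfrak o_\F/\mathfrak p_\F^m)$ (in particular the choice of lifts in the definitions of $\mathcal A^1_n$ and $\mathcal A^2_n$ is irrelevant), and $\K_n/\K_n(m)\cong\GL_n(\mathfrak o_\F/\mathfrak p_\F^m)/\bar P$, where $\bar P$ is the image of $\K_n(m)$, i.e. the stabiliser of $e_n$ for the right action $v\mapsto vg$ of $\GL_n(\mathfrak o_\F/\mathfrak p_\F^m)$ on row vectors.

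Next I would classify the cosets by row vectors. For $g,g'\in\GL_n(\mathfrak o_\F/\mathfrak p_\F^m)$ one has $g\bar P=g'\bar P$ iff $g^{-1}g'\in\bar P$ iff $e_ng^{-1}=e_ng'^{-1}$, so $g\bar P\mapsto e_ng^{-1}$ — equivalently the unique row vector $w$ with $wg=e_n$ — is an injection of $\K_n/\K_n(m)$ into $(\mathfrak o_\F/\mathfrak p_\F^m)^n$; since $\mathfrak o_\F/\mathfrak p_\F^m$ is local, its image is exactly the set $\cU_m$ of unimodular row vectors (those with at least one unit entry), each being the last row of some invertible matrix. I would then partition $\cU_m=\bigsqcup_{k=1}^n\cU_m^{(k)}$ according to the position $k$ of the rightmost unit entry: $w\in\cU_m^{(k)}$ means $w_k\in(\mathfrak o_\F/\mathfrak p_\F^m)^\times$, $w_{k+1},\dots,w_n\in\mathfrak p_\F/\mathfrak p_\F^m$, and $w_1,\dots,w_{k-1}$ arbitrary.

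Finally I would match the proposed representatives to this partition under $g\mapsto e_ng^{-1}$. Each element of $\mathcal A^1_n\cup\mathcal A^2_n$ is a matrix over $\mathfrak o_\F$ of determinant $\pm\tilde x\in\mathfrak o_\F^\times$, hence lies in $\K_n$. For $g\in\mathcal A^1_n$, solving $wg=e_n$ (modulo $\mathfrak p_\F^m$) gives $w=(-x^{-1}v,\ x^{-1})$, which lies in $\cU_m^{(n)}$ and runs bijectively over it as $(v,x)$ varies. For $g\in\mathcal A^2_n$ with index $j\in\{1,\dots,n-1\}$ — so $\tilde x$ occupies column $n$ and the lone $1$ of the bottom row occupies column $n-j$ — writing $w$ in blocks of sizes $n-j-1,1,j-1,1$ and solving $wg=e_n$ gives $w=(-x^{-1}w_1,\ x^{-1},\ -x^{-1}w_3,\ -x^{-1}w_2)$; the constraints $w_2\in\mathfrak p_\F/\mathfrak p_\F^m$ and $w_3\in\M_{1\times(j-1)}(\mathfrak p_\F/\mathfrak p_\F^m)$ are precisely what places $w$ in $\cU_m^{(n-j)}$, over which it runs bijectively as $(w_1,w_2,w_3,x)$ varies (the degenerate cases $j=1$ and $j=n-1$, including $n=2$, pose no difficulty).

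Since $\{n\}\cup\{n-j:1\leqslant j\leqslant n-1\}=\{1,\dots,n\}$ and the corresponding images are pairwise disjoint, $\mathcal A^1_n\cup\mathcal A^2_n$ is a complete and irredundant set of coset representatives; a cardinality count (both sides equal $q^{mn}-q^{(m-1)n}=[\K_n:\K_n(m)]$) is a convenient sanity check. I expect the only real work to be this last matching step of bookkeeping — keeping the left-coset/inverse conventions straight in the identification with row vectors and executing the block solution of $wg=e_n$ correctly — rather than anything conceptual.
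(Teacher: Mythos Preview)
Your proposal is correct and follows essentially the same approach as the paper: both identify $\K_n(m)$ as the stabiliser of $e_n$ under the right action of $\K_n$ on row vectors modulo $\mathfrak p_\F^m$, identify the orbit with unimodular vectors, and partition these by the position of the rightmost unit entry. The only cosmetic difference is direction: the paper writes down, for each unimodular vector $v$, a matrix with last row $v$ and then says ``by taking inverses we obtain the result'', whereas you start from the claimed representatives $g$ and compute $e_ng^{-1}$ directly; your block computation for $\mathcal A^2_n$ is exactly the inverse of the paper's explicit matrix, so the two arguments are the same up to orientation.
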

		\begin{proof} Note that $\K_n=\GL_n(\mathfrak o_{\F})$ acts on $\M_{1\times n}(\mathfrak o_{\F}/\mathfrak p_F^m)$, i.e.\ row vectors with entries in $\mathfrak o_{\F}/\mathfrak p_F^m$, via reduction modulo $\mathfrak p_F^m$ and multiplication from the right. Then $\K_n(m)$ is exactly the stabilizer of $(0,\dotsc,0,1)$. We show that a vector $v=(v_{n-1},\dotsc,v_0)\in \M_{1\times n}(\mathfrak o_{\F}/\mathfrak p_F^m)$ lies in the orbit of $(0,\dotsc,0,1)$ if and only if there is a $0\leqslant j\leqslant n-1$ such that $v_j\in (\mathfrak o_{\F}/\mathfrak p_F^m)^\times$. It is clear that if all the $v_i$ are elements of $\mathfrak p_F/\mathfrak p_F^m$ there can be no element in $\K_n$ that maps $(0,\dotsc,0,1)$ to $v$.\
		
		Suppose that $v_0\in (\mathfrak o_{\F}/\mathfrak p_F^m)^\times$, then 
			\[\left(\begin{smallmatrix}
				1_{n-1}&0\\
				\tilde{v}_{n-1}\dotsb\tilde{v}_{1}&\tilde{v}_0
			\end{smallmatrix}\right)\]
			is an element of $\K_n$ that maps $(0,\dotsc,0,1)$ to $v$.
			
			If $v_0\in \mathfrak p_F/\mathfrak p_F^m$ let $j$ be the smallest positive integer such that $v_j\in (\mathfrak o_{\F}/\mathfrak p_F^m)^\times$, then
			\[\left(\begin{smallmatrix}1_{n-j-1}&&&\\
				&0&&1\\
				&&1_{j-1}\\
				\tilde{v}_{n-1}\dotsb\tilde{v}_{j+1}&\tilde{v}_j&\tilde{v}_{j-1}\dotsb\tilde{v}_{1}&\tilde{v}_0
			\end{smallmatrix}\right)\]
			is an element of $\K_n$ that maps $(0,\dotsc,0,1)$ to $v$. By taking inverses we obtain the result.
		\end{proof}
		Note that by the Cartan decomposition the set 
		$$\mathcal B_n=\left\{\left(\begin{smallmatrix}
			\varpi_\F^{\alpha_{n-1}}&&&&\\
			&\varpi_\F^{\alpha_{n-2}}&&&\\
			&&\ddots&&\\
			&&&\varpi_\F^{\alpha_1}&\\
			&&&&1
		\end{smallmatrix}\right)\mid\alpha_i\in\mathbb Z,~0\leqslant\alpha_1\leqslant\alpha_2\leqslant\dotsc\leqslant\alpha_{n-1}\right\}$$
		is a set of double coset representatives for $\K_n\Z_n\backslash \G_n/\K_n$. This together with Lemma \ref{cosets1} gives us an exhaustive list of representatives for $\K_n\Z_n\backslash \G_n/\K_n(m)$. We will need to following slight refinement.
		\begin{proposition}
			The union of 
			$$\mathcal C_n=\left\{\left(\begin{smallmatrix}
				\varpi_{\F}^{\alpha_{n-1}}&&&&\\
				&\varpi_{\F}^{\alpha_{n-2}}&&&\\
				&&\ddots&&\\
				&&&\varpi_{\F}^{\alpha_{1}}&\\
				\tilde{v}_{n-1}&\tilde{v}_{n-2}&\dotsc&\tilde{v}_{1}&1
			\end{smallmatrix}\right)
			\mid
			\begin{array}{cc}
				&0\leqslant\alpha_1\leqslant\alpha_2\leqslant\dotsc\leqslant\alpha_{n-1}\\
				&v_i\in \M_n(\mathfrak o_{\F}/\mathfrak p_F^m)\\
				&\operatorname{val}_{\F}(\tilde{v}_i)<\alpha_i
			\end{array}\right\}$$
			and $\mathcal A^2_n\cdot\mathcal B_n$ is an exhaustive collection of coset representatives for $\K_n\Z_n\backslash \G_n/\K_n(m)$. 
		\end{proposition}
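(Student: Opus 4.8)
The plan is to refine the obvious exhaustive list of representatives. The Cartan decomposition gives $\mathcal B_n$ as representatives for $\K_n\Z_n\backslash\G_n/\K_n$, and Lemma \ref{cosets1} gives $\mathcal A_n^1\cup\mathcal A_n^2$ as representatives for $\K_n/\K_n(m)$; since $b\K_n=\bigcup_a ba\K_n(m)$, the family $\{\,ba:b\in\mathcal B_n,\ a\in\mathcal A_n^1\cup\mathcal A_n^2\,\}$ is an exhaustive (highly redundant) set of representatives for $\K_n\Z_n\backslash\G_n/\K_n(m)$, and for each fixed $b$ the members $ba$ already meet every double coset lying over the Cartan cell of $b$. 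So it suffices to show that each of these double cosets also meets $\mathcal C_n\cup(\mathcal A_n^2\cdot\mathcal B_n)$.

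For $b=\diag(\varpi_\F^{\alpha_{n-1}},\dots,\varpi_\F^{\alpha_1},1)$ and $a_1=\bigl(\begin{smallmatrix}1_{n-1}&0\\ \tilde v&\tilde x\end{smallmatrix}\bigr)\in\mathcal A_n^1$: since $\diag(1_{n-1},\tilde x)\in\K_n$ commutes with $b$, one may take $x=1$, so the representative becomes $b\nu$ with $\nu=\bigl(\begin{smallmatrix}1_{n-1}&0\\ \tilde v&1\end{smallmatrix}\bigr)$. The elementary matrix $1+c\,e_{n,n-i}\in\K_n$ satisfies $(1+c\,e_{n,n-i})\,b=b\,(1+c\varpi_\F^{\alpha_i}e_{n,n-i})$, left multiplication by an element of $\K_n$ does not change the double coset (as $\K_n\subseteq\K_n\Z_n$), and $(1+c\varpi_\F^{\alpha_i}e_{n,n-i})\nu$ is $\nu$ with its $i$-th last-row entry replaced by $v_i+c\varpi_\F^{\alpha_i}$; hence one may alter each $v_i$ by an arbitrary element of $\mathfrak p_\F^{\alpha_i}$, arranging $\operatorname{val}_\F(\tilde v_i)<\alpha_i$ (or, when that is impossible, $v_i=0$). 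When every coordinate is non-degenerate this puts $b\nu$ into $\mathcal C_n$; when $\nu$ reduces to the identity the double coset is $\K_n\Z_n b\K_n(m)$, which is represented inside $\mathcal A_n^2\cdot\mathcal B_n$ because every $a_2\in\mathcal A_n^2$ lies in $\K_n$, so $\K_n\Z_n a_2b\K_n(m)=\K_n\Z_n b\K_n(m)$.

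The remaining cases --- partially degenerate $\nu$, and the representatives $ba_2$ with $a_2\in\mathcal A_n^2$ --- are handled by the same mechanism supplemented by two further moves: (i) column operations among the first $n-1$ columns (right multiplication by $\bigl(\begin{smallmatrix}Q&0\\0&1\end{smallmatrix}\bigr)$ with $Q\in\GL_{n-1}(\mathfrak o_\F)$, which lies in $\K_n(m)$) compensated by row operations from $\K_n$, which allow a vanishing last-row entry to be replaced by a non-zero one drawn from a column of larger valuation; and (ii) for $a_2\in\mathcal A_n^2$, writing $a_2=\tau_j\,a_1'$ with $\tau_j$ the transposition $(n-j,\,n)$ and $a_1'$ of $\mathcal A_n^1$-shape, so that $b\tau_j=\tau_j\,b^{(j)}$ with $b^{(j)}$ diagonal (the $(n-j)$-th and $n$-th entries of $b$ interchanged), reducing to $\K_n\Z_n b^{(j)}a_1'\K_n(m)$, then normalizing as before and restoring the non-decreasing ordering of the diagonal by a permutation of $\K_n$ brought to the left. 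In each case one verifies the resulting representative lies in $\mathcal C_n\cup(\mathcal A_n^2\cdot\mathcal B_n)$.

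The main obstacle is the bookkeeping in this last step: one has to track the valuations of the last-row entries through all the elementary, diagonal and permutation manipulations, check at each stage that the left multipliers remain in $\K_n$ and the right multipliers in $\K_n(m)$ --- the latter constraint, that the last row be $\equiv(0,\dots,0,1)$ modulo $\mathfrak p_\F^m$, being what makes the reductions delicate --- and confirm that when columns of $b\nu$ collapse to the form $\varpi_\F^{\alpha_i}e_{n-i}$ the resulting double coset really is one of those exhibited by $\mathcal C_n$ or $\mathcal A_n^2\cdot\mathcal B_n$, so that nothing is lost in passing from the crude list $\{ba\}$ to the refined one.
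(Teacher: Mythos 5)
Your reduction for $b a_1$ with $a_1\in\mathcal A_n^1$ is correct and is essentially the paper's argument: normalize the corner entry to~$1$, then use the identity $(1+c\,e_{n,n-i})\,b=b\,(1+c\varpi_\F^{\alpha_i}e_{n,n-i})$ to push left-multipliers from $\K_n$ through $b$ and thereby shift each $\tilde v_i$ by an arbitrary element of $\mathfrak p_\F^{\alpha_i}$, so that either $\operatorname{val}_\F(\tilde v_i)<\alpha_i$ or $\tilde v_i$ has been zeroed out. That is all the paper does.

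Where your write-up goes astray is in what you think remains to be shown, and this stems from a misreading of the notation $\mathcal A_n^2\cdot\mathcal B_n$. In this paper that symbol denotes (as the displayed form of a general $g\in\mathcal A_n^2\cdot\mathcal B_n$ in the next proposition makes explicit) the set of products $b\cdot a_2$ with $b\in\mathcal B_n$, $a_2\in\mathcal A_n^2$, not $a_2\cdot b$. With that reading, the $b a_2$ double cosets are simply \emph{retained} in the list and require no further manipulation, and the proposition is proved as soon as the $b a_1$ case has been dealt with. You instead try to rewrite $\K_n\Z_n b a_2\K_n(m)$ as $\K_n\Z_n a_2' b'\K_n(m)$ via the transposition $\tau_j$ and then re-sort the diagonal; this is both unnecessary and doomed as a target, since $\K_n\Z_n a_2' b'\K_n(m)=\K_n\Z_n b'\K_n(m)$ (every $a_2'$ lies in $\K_n$), which only distinguishes Cartan cells and is far too coarse for $\K_n\Z_n\backslash\G_n/\K_n(m)$ --- the very observation you yourself make. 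Your distinction between ``non-degenerate'' and ``partially degenerate'' $\nu$ is likewise superfluous: after the reduction every such $b\nu$ (including $\nu=1$, i.e.\ $b\in\mathcal B_n$) is an element of $\mathcal C_n$ as the paper uses that set, so there is no case to push into $\mathcal A_n^2\cdot\mathcal B_n$. Finally, you do not actually carry out the extra manipulations --- your closing paragraph defers to ``bookkeeping'' --- so as written the argument is incomplete precisely on the steps that should not be there. Drop moves (i) and (ii), keep $\{b a_2\}$ as-is, and the $a_1$-reduction you already have finishes the proof.
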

		\begin{proof}
			As already mentioned by the above and Lemma \ref{cosets1} we know that $\mathcal A^1_n\cdot\mathcal B_n\cup\mathcal A^2_n\cdot\mathcal B_n$ yields an exhaustive collection of coset representatives for $\K_n\Z_n\backslash \G_n/\K_n(m)$. Let $X$ be any element in $\mathcal A^1_n\cdot\mathcal B_n$. Then
			$$X=\left(\begin{smallmatrix}
				A&0\\
				\tilde{v}&\tilde{x}
			\end{smallmatrix}\right)$$ where $A=\operatorname{diag}(\varpi_{\F}^{\alpha_{n-1}},\dotsc,\varpi_{\F}^{\alpha_{1}})$, $\tilde{x}\in\mathfrak o_{\F}^\times$ and $\tilde{v}=(\tilde{v}_{n-1},\dotsc,\tilde{v}_{1})\in \M_{1\times(n-1)}(\mathfrak o_{\F})$.
			Suppose there is $1\leqslant j\leqslant n-1$ such that $\operatorname{val}_{\F}(\tilde{v_j})\geqslant\alpha_j$. Then the matrix
			$$\left(\begin{smallmatrix}
				1_{n-j-1}&&&\\
				&1&&\\
				&&1_{j-1}&\\
				&-\tilde{v}_j\varpi_{\F}^{-\alpha_j}&&1
			\end{smallmatrix}\right)$$
			lies in $\K_n$ and
			$$\left(\begin{smallmatrix}
				1_{n-j-1}&&&\\
				&1&&\\
				&&1_{j-1}&\\
				&-\tilde{v}_j\varpi_{\F}^{-\alpha_j}&&1
			\end{smallmatrix}\right)X=\left(\begin{smallmatrix}
				&A&&0\\
				\tilde{v}_{n-1}\dotsc\tilde{v}_{j+1}&0&\tilde{v}_{j-1}\dotsc\tilde{v}_{1}&\tilde{x}
			\end{smallmatrix}\right)$$
			generates the same double coset as $X$. Hence we can assume that $\operatorname{val}_{\F}(\tilde{v}_i)<\alpha_i$ for all $1\leqslant i\leqslant n-1$. Moreover, by multiplying $X$ by the left by the matrix $\operatorname{diag}(1,\dotsc,1,\tilde{x}^{-1})\in \K_n$ we see that 
			$$\left(\begin{smallmatrix}
				A&0\\
				\tilde{x}^{-1}\tilde{v}&1
			\end{smallmatrix}\right)$$ lies in the same double coset as $X$, which implies the result.
		\end{proof}

		\subsection{Proof of Theorem \ref{depthzerocoset}}		
		
		We now consider the Hom-space~$ \Hom_{\R[\overline{\K_n\cap \presuper{g}\K_n(m)}]}(1,\tau)$ over various cases of our chosen coset representatives for~$\K_n\Z_n\backslash \G_n/\K_n(m)$.  Essentially, the cuspidality of~$\tau$ will force most of these spaces to be zero.
		
		\begin{proposition}
			For any $n\geqslant 2$ and $m\geqslant 1$ if $g\in\mathcal A^2_n\cdot\mathcal B_n$, then $ \Hom_{\R[\overline{\K_n\cap \presuper{g}\K_n(m)}]}(1,\tau)=0$.
		\end{proposition}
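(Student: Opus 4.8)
The plan is to show that the finite group $\bar H:=\overline{\K_n\cap\presuper{g}\K_n(m)}$ (the image of $\K_n\cap\presuper{g}\K_n(m)$ in $\GL_n(k_\F)=\K_n/\K_n^1$) contains the unipotent radical of a proper parabolic subgroup of $\GL_n(k_\F)$. Granting this, the claim follows at once: since $\tau$ is cuspidal we have $\Hom_{\R[N]}(1,\tau)=\tau^{N}=0$ for every unipotent radical $N$ of a proper parabolic of $\GL_n(k_\F)$ (equivalently, $\tau$ is not a quotient of a proper parabolic induction), and if $N\subseteq\bar H$ then $\Hom_{\R[\bar H]}(1,\tau)=\tau^{\bar H}\subseteq\tau^{N}=0$. (Recall from Section~\ref{secmackey} that this Hom-space is exactly the contribution of the double coset of $g$ to $\pi^{\K_n(m)}$, so this is what has to vanish.)

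Write $g=ba$ with $b=\diag(\varpi_\F^{\alpha_{n-1}},\dots,\varpi_\F^{\alpha_1},1)\in\mathcal B_n$ and $a\in\mathcal A^2_n$ (this is the form taken by elements of $\mathcal A^2_n\cdot\mathcal B_n$, just as $\mathcal C_n$ is the reduced form of $\mathcal A^1_n\cdot\mathcal B_n$). Reading off the columns of the matrix defining $\mathcal A^2_n$ one finds that $a\in\K_n$ (indeed $\det a\in\mathfrak o_\F^\times$), that $ae_n=\tilde x\,e_{n-j}$ with $\tilde x\in\mathfrak o_\F^\times$, and that $ae_{n-j}=\tilde w_2\,e_{n-j}+e_n$ with $\tilde w_2\in\mathfrak p_\F$. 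The essential feature of $\mathcal A^2_n$ is therefore that $g$ carries the last coordinate line $\langle e_n\rangle$ into the \emph{interior} coordinate line $\langle e_{n-j}\rangle$. To exploit this, introduce the row transvection group
\[W=\bigl\{\,1+e_{n-j}\,d^{\ast}\ :\ d^{\ast}\in\M_{1\times n}(\mathfrak o_\F),\ d_{n-j}=0\,\bigr\}.\]
Since $n-j\neq n$, every such matrix has last row $e_n^{\ast}$, so $W\subseteq\K_n(m)$ for all $m$, and hence $\presuper{g}W\subseteq\presuper{g}\K_n(m)$.

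The next step is to compute $\K_n\cap\presuper{g}W$ and reduce it modulo $\mathfrak p_\F$. Its elements are $1+(ge_{n-j})(d^{\ast}g^{-1})$; using $g=ba$ one gets $ge_{n-j}=e_n+\tilde w_2\varpi_\F^{\alpha_j}e_{n-j}\equiv e_n\pmod{\mathfrak p_\F}$ and $d^{\ast}g^{-1}=(d^{\ast}a^{-1})b^{-1}$. Put $c^{\ast}=d^{\ast}a^{-1}$, an arbitrary integral covector subject only to the relation $c^{\ast}_n=-\tilde w_2\,c^{\ast}_{n-j}$ coming from $d_{n-j}=0$. Because the bottom-right entry of $b$ is a unit, one checks that $1+(ge_{n-j})(c^{\ast}b^{-1})$ is integral if and only if $c^{\ast}_l\in\mathfrak p_\F^{\alpha_{n-l}}$ for all $l$ (with $\alpha_0:=0$); and for such $c^\ast$ the covector $(c^{\ast}b^{-1})_l=c^{\ast}_l\varpi_\F^{-\alpha_{n-l}}$ is integral, its $n$-th coordinate $c^\ast_n$ lies in $\mathfrak p_\F$, and as $c^{\ast}$ ranges over the admissible set the reduction $\overline{c^{\ast}b^{-1}}$ runs over \emph{all} covectors $\bar f^{\ast}\in(k_\F^n)^{\ast}$ with $\bar f^{\ast}_n=0$. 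Hence
\[\bar H\ \supseteq\ \bigl\{\,1+\bar e_n\bar f^{\ast}\ :\ \bar f^{\ast}\in(k_\F^n)^{\ast},\ \bar f^{\ast}_n=0\,\bigr\},\]
which is precisely the unipotent radical of the proper maximal parabolic $\mathrm{Stab}_{\GL_n(k_\F)}(\langle e_n\rangle)$; since $n\geqslant 2$ this parabolic is proper, and the proof concludes as in the first paragraph.

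The one genuinely technical point is the computation just described: keeping track of exactly which $\mathfrak p_\F$-adic constraints on the conjugating covector survive reduction modulo $\mathfrak p_\F$, given the interplay between the $\varpi_\F$-power shifts introduced by the diagonal factor $b$ and the coordinate swap introduced by $a$. What makes it work — and the reason one treats $\mathcal A^2_n$ separately from $\mathcal C_n$ — is that the bottom-right entry of $b$ is a unit: this places a unit in the last coordinate of $ge_{n-j}$ and keeps $d^{\ast}g^{-1}$ integral while still sweeping out the whole of $\{f^{\ast}:f^{\ast}_n=0\}$ modulo $\mathfrak p_\F$. For a representative with $ge_n$ proportional to $e_n$ this mechanism degenerates and the Hom-space need not vanish; that is exactly where the newform of Theorem~\ref{depthzerocoset} lives.
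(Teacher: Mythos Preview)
Your argument is correct and follows the same strategy as the paper: both show that $\overline{\K_n\cap\presuper{g}\K_n(m)}$ contains the unipotent radical $\N_{n-1,1}(k_\F)$ and then invoke cuspidality of~$\tau$. The paper writes down explicit matrices $X(a,b,c)\in\K_n(m)$ (which are precisely elements of your group $W$ of row-$(n-j)$ transvections) and computes $gX(a,b,c)g^{-1}$ entry by entry, whereas you package the same computation via the rank-one identity $g(1+e_{n-j}d^{\ast})g^{-1}=1+(ge_{n-j})(d^{\ast}g^{-1})$ and track the column $ge_{n-j}\equiv e_n$ and the covector $d^{\ast}g^{-1}$ separately; this is a cleaner presentation of the same calculation rather than a different idea.
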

		\begin{proof}
			Let $g\in\mathcal A^2_n\cdot\mathcal B_n$, which then has the form
			$$g=\left(\begin{smallmatrix}
				A&&&\\
				\varpi_{\F}^{\alpha_j}w_1&\varpi_{\F}^{\alpha_j}w_2&\varpi_{\F}^{\alpha_j}w_3&\varpi_{\F}^{\alpha_j}x\\
				&&B&\\
				&1&&
			\end{smallmatrix}\right),$$
			where  $w_1\in \M_{1\times (n-j-1)}(\mathfrak o_{\F}),w_2\in\mathfrak p_F,w_3\in \M_{1\times (j-1)}(\mathfrak p_F),x\in\mathfrak o_{\F}^\times$, $A=\operatorname{diag}(\varpi_{\F}^{\alpha_{n-1}},\dotsc,\varpi_{\F}^{\alpha_{j+1}})$, and $B=\operatorname{diag}(\varpi_{\F}^{\alpha_{j-1}},\dotsc,\varpi_{\F}^{\alpha_{1}})$, for integers $0\leqslant\alpha_1\leqslant\dotsc\leqslant\alpha_{n-1}$. 
			For $a\in \M_{1\times(n-j-1)}(\mathfrak o_{\F}),~b\in \M_{1\times(j-1)}(\mathfrak o_{\F}),~c\in\mathfrak o_{\F}$, let 
			$$X(a,b,c)=\left(\begin{smallmatrix}
				1_{n-j-1}&&&\\
				aA+ \varpi_{\F}^{\alpha_j}cw_1&1&bB+ \varpi_{\F}^{\alpha_j}cw_3&x \varpi_{\F}^{\alpha_j}c\\
				&&1_{j-1}&\\
				&&&1
			\end{smallmatrix}\right)$$ which is an element of $\K_n(m)$. Note that
			$$g^{-1}=\left(\begin{smallmatrix}
				A^{-1}&&&\\
				&&&1\\
				&&B^{-1}&\\
				-x^{-1}w_1A^{-1}&\varpi_{\F}^{-\alpha_j}x^{-1}&-x^{-1}w_3B^{-1}&-x^{-1}w_2
			\end{smallmatrix}\right)$$
			and we can compute
			$$gX(a,b,c)g^{-1}=\left(\begin{smallmatrix}
				1_{n-j-1}&&&\\
				\varpi_{\F}^{\alpha_j}w_2a&1+w_2c \varpi_{\F}^{\alpha_j}&\varpi_{\F}^{\alpha_j}w_2b&-\varpi_{\F}^{2\alpha_j}cw_2^2\\
				&&1_{j-1}&\\
				a&c&b&1-\varpi_{\F}^{\alpha_j}cw_2
			\end{smallmatrix}\right).$$
			This matrix is in $\K_n$ and since by assumption $w_2\in\mathfrak p_F$ we obtain
			$$\overline{gX(a,b,c)g^{-1}}=\left(\begin{smallmatrix}
				1_{n-j-1}&&&\\
				&1&&\\
				&&1_{j-1}&\\
				\overline{a}&\overline{c}&\overline{b}&1
			\end{smallmatrix}\right).$$
			Since $a\in \M_{1\times(n-j-1)}(\mathfrak o_{\F}), ~b\in \M_{1\times(j-1)}(\mathfrak o_{\F}), ~c\in\mathfrak o_{\F}$ were arbitrary we see that $\overline{\presuper{g}\K_n(m)\cap \K_n}$ contains $\N_{n-1,1}(k_{\F})$. Hence
			$$\Hom_{\overline{\presuper{g}\K_n(m)\cap \K_n}}(1,\tau)\subseteq\Hom_{\N_{n-1,1}(k_{\F})}(1,\tau),$$
			however by the cuspidality of $\tau$ the latter space is zero.
		\end{proof}
		
		\begin{proposition}
			Suppose $g\in\mathcal C_n$ is not a diagonal matrix. Then $ \Hom_{\R[\overline{\K_n\cap \presuper{g}\K_n(m)}]}(1,\tau)=0$.
		\end{proposition}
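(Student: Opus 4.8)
The plan is to mimic the strategy of the previous proposition: starting from $g \in \mathcal{C}_n$, produce enough elements of $\presuper{g}\K_n(m) \cap \K_n$ so that their reductions mod $\K_n^1$ generate a subgroup of $\GL_n(k_\F)$ containing a nontrivial unipotent radical of a proper parabolic, and then invoke cuspidality of $\tau$ to kill the Hom-space. Write $g$ in its block form with diagonal part $\operatorname{diag}(\varpi_\F^{\alpha_{n-1}}, \dotsc, \varpi_\F^{\alpha_1}, 1)$ and bottom row $(\tilde{v}_{n-1}, \dotsc, \tilde{v}_1, 1)$ satisfying $\operatorname{val}_\F(\tilde{v}_i) < \alpha_i$; since $g$ is not diagonal, at least one $\tilde{v}_i \neq 0$, so in particular some $\alpha_i \geqslant 1$.

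First I would conjugate suitable ``elementary'' elements of $\K_n(m)$ by $g$. The natural candidates are strictly-lower-triangular elementary matrices $1 + c\, e_{k\ell}$ with $\ell < k$, placed so that after conjugation by $g$ the entry gets multiplied by $\varpi_\F^{\alpha_k - \alpha_\ell} \leqslant 1$ (when $k$ corresponds to a later coordinate than $\ell$, valuations only go up) and hence stays integral; one also has to track the new entries that appear in the last row/column coming from the $\tilde{v}_i$ part of $g$, exactly as in the $\mathcal{A}^2_n$ case. The point is that these conjugates land in $\K_n$, and after reduction mod $\mathfrak{p}_\F$ the ``cross terms'' involving $\tilde{v}_i$ either vanish (when $\operatorname{val}_\F(\tilde{v}_i) \geqslant 1$) or survive; in either case one reads off that $\overline{\presuper{g}\K_n(m) \cap \K_n}$ contains the image of a nontrivial root subgroup. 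Running over all admissible pairs $(k,\ell)$, these root subgroups generate $\N_{i,n-i}(k_\F)$ for some $0 < i < n$ — the key combinatorial claim being that as soon as $g$ is non-diagonal one obtains a full block unipotent radical of a proper standard parabolic (possibly after conjugating by a permutation matrix to reorder the coordinates, which does not change whether the relevant Hom-space vanishes). Then
\[
\Hom_{\R[\overline{\K_n \cap \presuper{g}\K_n(m)}]}(1,\tau) \subseteq \Hom_{\R[\N_{i,n-i}(k_\F)]}(1,\tau) = 0
\]
by cuspidality of $\tau$.

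The main obstacle I anticipate is the bookkeeping in the non-diagonal, mixed case: when several $\tilde{v}_i$ are nonzero with differing valuations, the conjugate $g X g^{-1}$ has a more complicated shape than in the clean $\mathcal{A}^2_n$ computation, and one must argue carefully that the reductions genuinely fill out an entire $\N_{i,n-i}(k_\F)$ rather than only a proper (possibly zero) subgroup of it. I would handle this by isolating the smallest index $i$ with $\tilde{v}_i \neq 0$: conjugating the elementary matrix $1 + c\, e_{(n)(i)}$ (last row, $i$-th column) by $g$ produces, after reduction, a matrix whose off-diagonal block is controlled by $\overline{c \varpi_\F^{-\alpha_i} \tilde{v}_i} = 0$ on the relevant cross term (since $\operatorname{val}_\F(\tilde{v}_i) < \alpha_i$ forces $\operatorname{val}_\F(\varpi_\F^{-\alpha_i}\tilde{v}_i) < 0$ — wait, this is a unit or negative valuation, so one instead scales $c$ by a high power of $\varpi_\F$ to clear it) — more robustly, one chooses $c \in \mathfrak{p}_\F^m$ large enough that all parasitic terms vanish mod $\mathfrak{p}_\F$ while the desired term $\overline{c}$ can still be made an arbitrary element of $k_\F$ by instead using an elementary matrix with a $\varpi_\F^{-1}$-adjusted entry, exactly as the $X(a,b,c)$ construction does in the preceding proof. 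Pushing this through for a spanning set of root vectors, and checking that the resulting roots are exactly those of $\N_{i,n-i}$ for the block determined by the ``non-diagonality'' of $g$, completes the argument.
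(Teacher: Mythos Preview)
Your overall strategy---conjugate carefully chosen elements of~$\K_n(m)$ by~$g$, reduce mod~$\mathfrak{p}_\F$, and show the images generate the unipotent radical of a proper parabolic---is exactly the paper's approach. But there is a genuine gap in your execution. Your attempted construction uses an elementary matrix with entry in the \emph{last row} of~$X\in\K_n(m)$; such entries are forced into~$\mathfrak{p}_\F^m$, so after conjugation they cannot be made to reduce to an arbitrary element of~$k_\F$ (your mid-argument ``wait'' and the suggestion to scale~$c$ by a higher power of~$\varpi_\F$ both point in the wrong direction here). The mechanism you are missing is that the nonzero entry~$\tilde v_j$ in the last row of~$g$ is not an obstruction but the \emph{tool}: upon conjugation it transports freely varying entries from the upper-left block of~$X$ into the last row of~$gXg^{-1}$.

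Concretely, the paper lets~$j$ be the smallest index with~$\tilde v_j\neq 0$ and targets~$\N_{n-j,j}(k_\F)$. It works inside the subgroup~$\K'_{n,j}\subset\K_n(m)$ of matrices supported in the top-left~$(n-j)\times(n-j)$ block together with the rows directly below (and trivial in the last row and column), builds explicit matrices~$X(a,b,c,d)$ there with entries such as~$x^{-1}aA$ and~$B^{-1}cA$ (where~$x=\tilde v_j$), and computes~$gX(a,b,c,d)g^{-1}$: the~$\tilde v_j$-interaction produces last-row entries~$(\overline a,\overline b,\ldots)$ while the diagonal cross terms~$\varpi_\F^{\alpha_j}x^{-1}(\cdots)$ die mod~$\mathfrak p_\F$ because~$\operatorname{val}_\F(\tilde v_j)<\alpha_j$. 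When several~$\tilde v_i$ are nonzero the paper does not do ad hoc bookkeeping but runs an induction, nesting the already-handled block~$h_1$ inside a larger~$h_2$ with one further nonzero entry to the left; this cleanly propagates containment of~$\N_{r_1-j,j}(k_\F)$ up to~$\N_{r_2-j,j}(k_\F)$. Your sketch identifies the right minimal index~$j$ but never locates this transport mechanism or the inductive structure, and without them the argument does not go through.
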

		\begin{proof}
			Note that any such $g$ has the form 
			$$g=\left(\begin{smallmatrix}
				A&&&\\
				&\varpi_{\F}^{\alpha_j}&&\\
				&&B&\\
				v&x&0&1
			\end{smallmatrix}\right),$$
			where $1\leqslant j\leqslant n-1, A=\operatorname{diag}(\varpi_{\F}^{\alpha_{n-1}},\dotsc,\varpi_{\F}^{\alpha_{j+1}}),B=\operatorname{diag}(\varpi_{\F}^{\alpha_{j-1}},\dotsc,\varpi_{\F}^{\alpha_{1}}),v\in \M_{1\times(n-j-1)}(\mathfrak o_{\F})$ and $x\not=0$. Let $\K'_{n,j}$ be the subgroup of $\K_n(m)$, defined by,
			\[\K'_{n,j}=\left\{\left(\begin{smallmatrix}
				X&&\\
				Y&1_{j-1}&\\
				0&0&1
			\end{smallmatrix}\right)\mid X\in\operatorname{GL}_{n-j}(\mathfrak o_{\F}), Y\in \M_{(j-1)\times (n-j)}(\mathfrak o_{\F})\right\}.
			\]

			We claim that $\overline{\presuper{g}\K'_{n,j}\cap \K}$ contains the unipotent radical $\N_{n-j,j}(k_{\F})$.  We will show this inductively. As a base case consider, for any $r\geqslant 2$, a matrix $h\in C_r$ of the form
			\[h=\left(\begin{smallmatrix}
				A&&&\\
				&\varpi_{\F}^{\alpha_j}&&\\
				&&B&\\
				0&x&0&1
			\end{smallmatrix}\right)\]
			where $1\leqslant j\leqslant r-1, A=\operatorname{diag}(\varpi_{\F}^{\alpha_{r-1}},\dotsc,\varpi_{\F}^{\alpha_{j+1}}),B=\operatorname{diag}(\varpi_{\F}^{\alpha_{j-1}},\dotsc,\varpi_{\F}^{\alpha_{1}})$ and $x\not=0$. Note that we have $\operatorname{val}_{\F}(x)<\alpha_j$ and since $x\not=0$ that $\alpha_j>0$. 
			For any $a\in \M_{1\times (r-j-1)}(\mathfrak o_{\F}),~b\in\mathfrak o_{\F},~c\in \M_{(j-1)\times(r-j-1)}(\mathfrak o_{\F})$, and $d \in \M_{1\times(j-1)}(\mathfrak o_{\F})$ let
			\[X(a,b,c,d)=\left(\begin{smallmatrix}
				1_{r-j-1}&&&\\
				x^{-1}aA&1+\varpi_{\F}^{\alpha_j}x^{-1}b&&\\
				B^{-1}cA&B^{-1}d\varpi_{\F}^{\alpha_j}&1_{j-1}&\\
				&&&1
			\end{smallmatrix}\right).\] Since $\alpha_i>\operatorname{val}_{\F}(x)$ for all $i\geqslant j$ we have that $x^{-1}aA\in \M_{(r-j-1)\times(r-j-1)}(\mathfrak o_{\F})$. Moreover, the $\alpha_i$ are non-decreasing which implies that $X(a,b,c,d)$ is an element of $\K'_{r,j}$.
			We can compute
			\[hX(a,b,c,d)h^{-1}=\left(\begin{smallmatrix}
				1_{r-j-1}&&&\\
				\varpi_{\F}^{\alpha_j}x^{-1}a&1+\varpi_{\F}^{\alpha_j}x^{-1}b&&\\
				c&d&1_{j-1}&\\
				a&b&0&1
			\end{smallmatrix}\right),\]
			which yields an element of $\K_n$. Clearly, we have
			\[\overline{hX(a,b,c,d)h^{-1}}=\left(\begin{smallmatrix}
				1_{r-j-1}&&&\\
				0&1&&\\
				\overline{c}&\overline{d}&1_{j-1}&\\
				\overline{a}&\overline{b}&0&1
			\end{smallmatrix}\right),\]
			which finishes the base case.
			
			Let $$h_1=\left(\begin{smallmatrix}
				B&&\\
				&C&\\
				v&&1
			\end{smallmatrix}\right)\in\mathcal C_{r_1},$$
			where $B=\operatorname{diag}(\varpi_{\F}^{\alpha_{r_1-1}},\dotsc,\varpi_{\F}^{\alpha_{j}}), C=\operatorname{diag}(\varpi_{\F}^{\alpha_{j-1}},\dotsc,\varpi_{\F}^{\alpha_1})$ and $v\in \M_{1\times (r_1-j)}(\mathfrak o_{\F})$. We assume that $\overline{h_1\K'_{r_1,j}h_1^{-1}\cap K}$ contains $\N_{r_1-j,j}(k_{\F})$.

			Assume now that 
			$$h_2=\left(\begin{smallmatrix}
				A&&&&\\
				0&\varpi_{\F}^{\alpha_{r_1}}&&&\\
				&&&&\\
				&&&h_1&\\
				0&x&&&
			\end{smallmatrix}\right)\in\mathcal C_{r_2}$$ 
			where $A=\operatorname{diag}(\varpi_{\F}^{\alpha_{r_2-1}},\dotsc,\varpi_{\F}^{\alpha_{r_1+1}})$ and $x\not=0$. Then for any $$T=\left(\begin{smallmatrix}
				X&&\\
				Y&1&\\
				0&0&1
			\end{smallmatrix}\right)\in K'_{r_1,j}$$
			we have that 
			$$h_2\left(\begin{smallmatrix}
				1_{r_2-r_1}&\\
				&T
			\end{smallmatrix}\right)h_2^{-1}=\left(\begin{smallmatrix}
				1_{r_2-r_1}&\\
				&h_1Th_1^{-1}
			\end{smallmatrix}\right).$$ By our assumption this implies that $\overline{h_2K'_{r_2,j}h_2^{-1}\cap K}$ contains
			$$\left(\begin{smallmatrix}
				1_{r_2-r_1}&&\\
				&1_{r_1-j}&\\
				&*&1_j
			\end{smallmatrix}\right).$$
			Now let $$Y(a,b,c,d)=\left(\begin{smallmatrix}
				1_{r_2-r_1-1}&&&&\\
				x^{-1}aA&1+\varpi_{\F}^{\alpha_{r_1}}x^{-1}b&&&\\
				&&1_{r_1-j}&&\\
				C^{-1}cA&C^{-1}d\varpi_{\F}^{\alpha_{r_1}}&&1_{j-1}&\\
				&&&&1
			\end{smallmatrix}\right)$$
			where $a\in \M_{1\times(r_2-r_1-1)}(\mathfrak o_{\F}),b\in\mathfrak o_{\F}, c\in \M_{(j-1)\times(r_2-r_1-1)}(\mathfrak o_{\F})$ and $d\in \M_{(j-1)\times 1}(\mathfrak o_{\F})$. By our assumptions on $x$ and the $\alpha_i$, it is easy to see that $Y(a,b,c,d)\in \K'_{r_2,j}$.
			We can compute
			$$h_2Y(a,b,c,d)h_2^{-1}=\left(\begin{smallmatrix}
				1_{r_2-r_1-1}&&&&\\
				\varpi_{\F}^{\alpha_r}x^{-1}a&1+\varpi_{\F}^{\alpha_r}x^{-1}b&&&\\
				&&1_{r_1-j}&&\\
				c&d&&1_{j-1}&\\
				a&b&&&1
			\end{smallmatrix}\right)$$
			and since $\operatorname{val}_{\F}(x)<\alpha_r$ we have
			$$\overline{h_2Y(a,b,c,d)h_2^{-1}}=\left(\begin{smallmatrix}
				1_{r_2-r_1-1}&&&&\\
				&1&&&\\
				&&1_{r_1-j}&&\\
				\overline{c}&\overline{d}&&1_{j-1}&\\
				\overline{a}&\overline{b}&&&1
			\end{smallmatrix}\right).$$
			Hence we obtain that $\overline{\presuper{h_2}\K'_{r_2,j}\cap \K_n}$ contains $\N_{r_2-j,j}(k_{\F})$. Hence
			\[\Hom_{\R[\overline{\K_n\cap \presuper{g}\K_n(m)}]}(1,\tau)\subseteq\Hom_{\R[\N_{r_2-j,j}(k_{\F})]}(1,\tau),\]
			however by the cuspidality of $\tau$ the latter space is zero.
		\end{proof}
		By the above two propositions the only elements $g$ of $\mathcal C_n$ or $\mathcal A^2_n\cdot\mathcal B_n$ for which the space $\Hom_{\R[\overline{\K_n\cap \presuper{g}\K_n(m)}]}(1,\tau)$ can be nonzero are diagonal matrices in $\mathcal C_n$, i.e., elements of $\mathcal B_n$.
		\begin{proposition}
			Suppose that $g=\operatorname{diag}(\varpi_{\F}^{\alpha_{n-1}},\dotsc,\varpi_{\F}^{\alpha_1},1)$ is an element of $\mathcal B_n$ where $\alpha_{n-1}\geqslant m$, then $ \Hom_{\R[\overline{\K_n\cap \presuper{g}\K_n(m)}]}(1,\tau)=0$.
		\end{proposition}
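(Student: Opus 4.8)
The plan is to argue exactly as in the two preceding propositions: exhibit the unipotent radical $\N_{1,n-1}(k_\F)$ of a proper lower block parabolic of $\GL_n(k_\F)$ inside $\overline{\K_n\cap\presuper{g}\K_n(m)}$, so that
\[\Hom_{\R[\overline{\K_n\cap \presuper{g}\K_n(m)}]}(1,\tau)\subseteq\Hom_{\R[\N_{1,n-1}(k_\F)]}(1,\tau),\]
and the right-hand side vanishes by cuspidality of $\tau$ (using $n\geqslant 2$, so that $\N_{1,n-1}$ is indeed a proper parabolic's unipotent radical). The hypothesis $\alpha_{n-1}\geqslant m$ enters at a single point: it guarantees that the unique entry of this unipotent radical which, after conjugating back by $g$, lands in the last row of the matrix ends up in $\mathfrak p_\F^m$, so the conjugate lies in $\K_n(m)$.

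In detail, I would write $g=\operatorname{diag}(\varpi_\F^{\alpha_{n-1}},g')$ with $g'=\operatorname{diag}(\varpi_\F^{\alpha_{n-2}},\dotsc,\varpi_\F^{\alpha_1},1)\in\GL_{n-1}(\mathfrak o_\F)$, and take, for an arbitrary column $X\in\M_{(n-1)\times 1}(\mathfrak o_\F)$, the element $\left(\begin{smallmatrix}1&0\\X&1_{n-1}\end{smallmatrix}\right)\in\N_{1,n-1}(\mathfrak o_\F)\subseteq\K_n$. A short block computation gives
\[g^{-1}\left(\begin{smallmatrix}1&0\\X&1_{n-1}\end{smallmatrix}\right)g=\left(\begin{smallmatrix}1&0\\\varpi_\F^{\alpha_{n-1}}(g')^{-1}X&1_{n-1}\end{smallmatrix}\right).\]
Since $\alpha_{n-1}\geqslant\alpha_i\geqslant 0$ for every $i$, every entry of $\varpi_\F^{\alpha_{n-1}}(g')^{-1}X$ lies in $\mathfrak o_\F$, so this matrix is in $\K_n$; its last row equals $(\varpi_\F^{\alpha_{n-1}}X_{n-1},0,\dotsc,0,1)$, whose entries in positions $(n,1),\dotsc,(n,n-1)$ lie in $\mathfrak p_\F^{\alpha_{n-1}}\subseteq\mathfrak p_\F^m$ as $\alpha_{n-1}\geqslant m$, and whose $(n,n)$-entry is $1$. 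Hence $g^{-1}\left(\begin{smallmatrix}1&0\\X&1_{n-1}\end{smallmatrix}\right)g\in\K_n(m)$, so $\N_{1,n-1}(\mathfrak o_\F)\subseteq\K_n\cap\presuper{g}\K_n(m)$; reducing modulo $\mathfrak p_\F$ gives $\N_{1,n-1}(k_\F)\subseteq\overline{\K_n\cap\presuper{g}\K_n(m)}$, which together with the displayed inclusion and cuspidality of $\tau$ finishes the proof.

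I do not anticipate a genuine difficulty here. The only thing to get right is the choice of unipotent subgroup -- one must use the ``thin'' radical $\N_{1,n-1}$, complementary to the unipotent radicals used in the previous two propositions -- and the bookkeeping that conjugation by $g$ scales the bottom-left entry by $\varpi_\F^{\alpha_{n-1}}$, which is precisely the place where $\alpha_{n-1}\geqslant m$ is consumed.
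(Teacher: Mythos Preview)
Your proof is correct and is essentially identical to the paper's: both exhibit $\N_{1,n-1}(k_\F)$ inside $\overline{\K_n\cap\presuper{g}\K_n(m)}$ via the same block-conjugation computation (you start from $\N_{1,n-1}(\mathfrak o_\F)$ and conjugate by $g^{-1}$, the paper starts from the conjugated element in $\K_n(m)$ and conjugates by $g$, which is the same calculation in reverse) and then invoke cuspidality of $\tau$.
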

		\begin{proof}
			Let $g'=\operatorname{diag}(\varpi_{\F}^{\alpha_{n-2}},\dotsc,\varpi_{\F}^{\alpha_1})$. Note that by our assumptions for any $v\in \M_{(n-2)\times 1}(\mathfrak o_{\F})$ and $x\in\mathfrak o_{\F}$ the matrix
			$$
			\begin{pmatrix}
				1&&\\
				g'^{-1}v\varpi_{\F}^{\alpha_{n-1}}&1_{n-2}&\\
				\varpi_{\F}^{\alpha_{n-1}}x&&1
			\end{pmatrix}
			$$
			is an element of $\K_n(m)$. We can compute
			$$g \left(\begin{smallmatrix}
				1&&\\
				g'^{-1}v\varpi_{\F}^{\alpha_{n-1}}&1_{n-2}&\\
				\varpi_{\F}^{\alpha_{n-1}}x&&1
			\end{smallmatrix}\right)g^{-1}=\left(\begin{smallmatrix}
				1&&\\
				v&1_{n-2}&\\
				x&&1
			\end{smallmatrix}\right),$$
			and obtain that $\overline{\K_n\cap \presuper{g}\K_n(m)}$ contains $\N_{1,n-1}(k_{\F})$ which yields the result.
		\end{proof}
		\begin{proposition}\label{onedim} 
			Let $g=\operatorname{diag}(\varpi_{\F}^{\alpha_{n-1}},\dotsc,\varpi_{\F}^{\alpha_1},1)\in \mathcal B_n$ where $\alpha_{n-1}<m$.
			\begin{enumerate}
				\item Suppose the $\alpha_i$ are not strictly increasing, i.e.\ there is $1\leqslant j\leqslant n-1$ such that $\alpha_{j-1}=\alpha_{j}$, then $\Hom_{\overline{g\K_n(m)g^{-1}\cap \K}}(1,\tau)$ is zero.  
				\item If for $1\leq i\leq n-2$ we have that $\alpha_i<\alpha_{i+1}$ then $\Hom_{\overline{g\K_n(m)g^{-1}\cap \K}}(1,\tau)$ is one-dimensional.
			\end{enumerate}
			  
		\end{proposition}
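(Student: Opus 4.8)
The plan is to identify the group $\H_g:=\overline{g\K_n(m)g^{-1}\cap\K_n}$ explicitly and then compute $\Hom_{\R[\H_g]}(1,\tau)$ directly. Write $g=\diag(\varpi_\F^{\alpha_{n-1}},\dots,\varpi_\F^{\alpha_1},1)$ and set $a=(a_1,\dots,a_n)=(\alpha_{n-1},\dots,\alpha_1,0)$, so that $a_1\geqslant\cdots\geqslant a_{n-1}\geqslant a_n=0$ and $a_1<m$. Conjugation by $g$ scales the $(i,j)$-entry of a matrix by $\varpi_\F^{a_i-a_j}$; tracking which entries of $x\in\K_n(m)$ can survive the condition $gxg^{-1}\in\K_n$ and then reducing modulo $\mathfrak{p}_\F$ — here using $a_1<m$, so that $m-a_j\geqslant 1$ for all $j$ — one finds that $\H_g$ is exactly the subgroup of $\GL_n(k_\F)$ of those matrices which are block lower-triangular for the partition of $\{1,\dots,n\}$ into the constancy-blocks of $a$ \emph{and} whose last row is $(0,\dots,0,1)$; in particular $\H_g\subseteq\P_n(k_\F)$. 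This is the analogue for a small diagonal $g$ of the matrix computations in the preceding propositions, and is routine.

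For assertion (2), when the $\alpha_i$ strictly increase, all constancy-blocks are singletons, so $\H_g$ is the group of lower-triangular matrices with last row $(0,\dots,0,1)$, i.e.\ $\B_{n-1}^{\op}(k_\F)$ embedded as $h\mapsto\diag(h,1)$. For a cuspidal $\tau$ the restriction to the mirabolic is a Gelfand--Graev representation, $\tau|_{\P_n(k_\F)}\cong\ind_{\U_n(k_\F)}^{\P_n(k_\F)}(\overline\psi)$, and restricting further along $\GL_{n-1}(k_\F)\hookrightarrow\P_n(k_\F)$ gives $\tau|_{\GL_{n-1}(k_\F)}\cong\ind_{\U_{n-1}(k_\F)}^{\GL_{n-1}(k_\F)}(\overline\psi)$; over $\R$ this follows by lifting to characteristic $0$, and is reflected in the support of $B_{\tau,\overline\psi}$ on $\P_n(k_\F)$. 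So $\Hom_{\R[\H_g]}(1,\tau)\cong\Hom_{\R[\B_{n-1}^{\op}(k_\F)]}\!\bigl(1,\ind_{\U_{n-1}(k_\F)}^{\GL_{n-1}(k_\F)}(\overline\psi)\bigr)$, and Mackey's formula applied to the Bruhat decomposition $\GL_{n-1}(k_\F)=\bigsqcup_w\B_{n-1}^{\op}(k_\F)\,\dot w\,\U_{n-1}(k_\F)$ ($w$ over the Weyl group, $\dot w$ a permutation representative) expresses this as a direct sum over $w$ of one-dimensional spaces when $\overline\psi$ is trivial on $\U_{n-1}(k_\F)\cap\dot w\,\B_{n-1}^{\op}(k_\F)\,\dot w^{-1}$ and of zero spaces otherwise. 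The latter group is a product of root subgroups $\U_\gamma$ indexed by an inversion set of $w$; since $\overline\psi$ is non-degenerate, triviality on it forces that inversion set to contain no simple root, hence $w=1$. Thus the Hom-space is one-dimensional; tracing through the isomorphisms, in the Whittaker model of $\tau$ it is spanned by $\sum_{b\in\B_{n-1}^{\op}(k_\F)}b\cdot B_{\tau,\overline\psi}$, which is nonzero because $\B_{n-1}^{\op}(k_\F)\cap\U_n(k_\F)=\{1\}$ forces its value at the identity to equal $B_{\tau,\overline\psi}(1)=1$ (by the support of $B_{\tau,\overline\psi}$ on the mirabolic) — this is the vector needed in Theorem~\ref{depthzerocoset}.

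For assertion (1), when the $\alpha_i$ do not strictly increase, some constancy-block $B$ of $a$ has $\lvert B\rvert\geqslant 2$. By the same Gelfand--Graev restriction, $\Hom_{\R[\H_g]}(1,\tau)$ is a direct sum over double cosets $\U_n(k_\F)x\H_g$ of one-dimensional spaces when $\overline\psi$ is trivial on $\U_n(k_\F)\cap x\H_gx^{-1}$ and zero otherwise, so it suffices to show $\overline\psi$ is non-trivial on $\U_n(k_\F)\cap x\H_gx^{-1}$ for \emph{every} $x\in\P_n(k_\F)$. For $x=1$ this is immediate: $\U_n(k_\F)\cap\H_g$ is the block-diagonal upper unipotent subgroup for the partition of $a$, and since $\lvert B\rvert\geqslant 2$ it contains a simple root subgroup of $\GL_n(k_\F)$, on which $\overline\psi$ is non-trivial. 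The remaining double cosets are treated from the explicit description of $\H_g$ and of its $\P_n(k_\F)$-conjugates, in the spirit of the preceding propositions. Note this cannot be reduced to ``$\H_g$ contains the unipotent radical of a proper parabolic of $\GL_n(k_\F)$'': e.g.\ when $a_1=\cdots=a_{n-1}>0$, $\H_g=\diag(\GL_{n-1}(k_\F),1)$, which contains no conjugate of such a subgroup, and there the vanishing is exactly the statement that the Gelfand--Graev representation of $\GL_{n-1}(k_\F)$ has no trivial constituent (as $n-1\geqslant 2$).

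The identification of $\H_g$ and the Bruhat--Mackey bookkeeping are routine; \textbf{the main obstacle is the case $x\neq1$ of assertion (1)} — showing that as soon as one constancy-block of $a$ is non-trivial, \emph{all} of the double-coset contributions are annihilated by non-degeneracy of $\overline\psi$. Over $\mathbb{C}$ this has the compact reformulation $\sum_{h\in\H_g}\chi_\tau(h)=0$, legitimate because every element of $\H_g\subseteq\P_n(k_\F)$ has $1$ as an eigenvalue, so the character of the cuspidal $\tau$ already vanishes on $\H_g$ away from its unipotent elements; in the modular setting one argues directly with the characteristic-free Mackey description. It is this vanishing that forces the case-by-case analysis.
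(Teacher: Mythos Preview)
Your argument for part (2) is correct and essentially the same as the paper's: both use that $\H_g=\B_{n-1}^{\op}(k_\F)$ and compute the invariants in the Kirillov model $\ind_{\U_n}^{\P_n}(\overline\psi)$ via the Bruhat decomposition, showing that only the identity coset survives.

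For part (1), however, you have a genuine gap --- and you flag it yourself. After handling the identity double coset you write that ``the remaining double cosets are treated from the explicit description of $\H_g$ and of its $\P_n(k_\F)$-conjugates'' and then call this ``the main obstacle'' requiring ``case-by-case analysis''. No such analysis is given, so the proof of (1) is incomplete.

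The paper avoids this difficulty entirely with a simple observation you missed: under the hypothesis $\alpha_{n-1}<m$, the group $\H_g$ \emph{always contains} $\B_{n-1}^{\op}(k_\F)$ (this is immediate from your own description of $\H_g$ as block-lower-triangular with last row $(0,\dots,0,1)$). Hence any $\H_g$-invariant function in $\ind_{\U_n}^{\P_n}(\overline\psi)$ is already $\B_{n-1}^{\op}(k_\F)$-invariant, so by the same Bruhat argument you used in part (2) it is supported on $\U_n\B_{n-1}^{\op}$ and determined by its value at $1$. Now if $\alpha_{j-1}=\alpha_j$, then $\H_g$ contains the simple root subgroup $1+k_\F E_{j-1,j}\subseteq\U_n$; invariance under this subgroup forces $f(1)=\overline\psi(u)f(1)$ for some $u$ with $\overline\psi(u)\neq 1$, whence $f(1)=0$ and $f=0$.

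In short: rather than Mackey over $\U_n\backslash\P_n/\H_g$, use the inclusion $\B_{n-1}^{\op}\subseteq\H_g$ to reduce immediately to the trivial coset, and then kill that single coset with one extra root subgroup. This reuses your part (2) computation and sidesteps the case-by-case analysis altogether.
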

		\begin{proof}
			Note that for any matrix $X=(X_{i,j})\in \K_n(m)$ we have $(gXg^{-1})_{i,j}=\varpi_{\F}^{\alpha_{n-i}-\alpha_{n-j}}X_{i,j}$. This implies that $\overline{g\K_n(m)g^{-1}\cap \K}$ is contained in $\P_n$, i.e. the mirabolic subgroup. Since $\tau$ has a Kirillov model we see that
			$$\Hom_{\overline{g\K_n(m)g^{-1}\cap \K}}(1,\tau)=\Hom_{\overline{g\K_n(m)g^{-1}\cap \K}}(1,\operatorname{ind}_{\U_n}^{\P_n}(\psi_n)).$$
			Moreover, $\overline{g\K_n(m)g^{-1}\cap \K}$ contains 
			$$\begin{pmatrix}
				\B_{n-1}^{\op}(k_{\F})&\\
				&1
			\end{pmatrix}$$
			where $\B_{n-1}^{\op}(k_{\F})$ are all invertible lower triangular matrices in $\operatorname{GL}_{n-1}(k_{\F})$.
			
			Let $\Psi$ be an element of $\Hom_{\overline{g\K_n(m)g^{-1}\cap \K}}(1,\operatorname{ind}_{\U_n}^{\P_n}(\psi_n))$ and set $f=\Psi(1)$. Then for any $u\in \U_n,g\in \P_n,b\in \B_{n-1}^{\op}(k_{\F})$ we have that
			$f(ugb)=\psi_n(u)f(gb)=\psi_n(u)(b\cdot f)(g)=\psi_n(u)f(g)$. Note that the Bruhat decomposition $\P_n=\bigcup_{w\in \W}\U_nw\B_{n-1}^{\op}(k_{\F})$ implies that $f$ is uniquely determined by its values on the Weyl group $\W$. We realize $\W$ as the set of permutation matrices. For any permutation matrix $w_\sigma$, where $\sigma\in S_n$ and any matrix $A=(a_{i,j})_{1\leqslant i,j\leqslant n}$ we have that $(w_\sigma Aw_\sigma^{-1})_{i,j}=a_{\sigma(i),\sigma(j)}$. Now if $\sigma^{-1}(i)<\sigma^{-1}(i+1)$ for all $1\leqslant i\leqslant n-1$, then $\sigma=\operatorname{id}$. Hence if $w_\sigma\not=1$, there is an element $b\in \B_{n-1}^{\op}(k_{\F})$ such that $wbw^{-1}\in U_n$ and $\psi_n(wbw^{-1})\not=1$. Then 
			$f(w_\sigma)=f(w_\sigma b)=\psi_n(w_\sigma bw_\sigma^{-1})f(w_{\sigma})$ and hence 
			$f(w_\sigma)=0$ for $w_\sigma\not=1$. Hence the support of $f$ is contained in $\U_n\B_{n-1}^\op$.
			
			\textbf{ad (1):} Since we assume that there is $1\leqslant j\leqslant n-1$ such that $\alpha_{j-1}=\alpha_{j}$, we see that $\overline{g\K_n(m)g^{-1}\cap \K}$ contains the subgroup
			$$1_n+k_{\F}E_{j-1,j},$$ where $E_{j-1,j}$ is the matrix whose only nonzero entry is one at $(j-1,j)$. This implies that there exists $u\in\overline{g\K_n(m)g^{-1}\cap \K}\cap \U_n$ such that $\psi_n(u)\not=1$. We obtain 
			$$f(1)=(u\cdot f)(1)=f(u)=\psi_n(u)f(1)$$
			and hence $f(1)=0$, which implies that $f\equiv 0$ and $\Psi=0$.
			
			\textbf{ad (2):} In this case $\overline{g\K_n(m)g^{-1}\cap K}=\B_{n-1}^\op$. Now the space of $\B_{n-1}^\op$-invariant functions in $\Ind_{\U_n}^{\P_n}(\psi)$ which are supported in $\U_n\B_{n-1}^\op$ is clearly one-dimensional. Moreover, any such function gives rise to a unique morphism in 
			$$\Hom_{\B_{n-1}^\op}(1,\operatorname{ind}_{\U_n}^{\P_n}(\psi_n))$$
			which implies the result.\end{proof}
		
		By putting all the above propositions together we obtain that $\Hom_{\overline{g\K_n(m)g^{-1}\cap \K}}(1,\tau)$ can only by nonzero for $g$ in the same double coset as any of
		$$\mathcal D_n(m)\coloneqq\left\{\begin{pmatrix}
			\varpi^{\alpha_{n-1}}&&&&\\
			&\varpi^{\alpha_{n-2}}&&&\\
			&&\ddots&&\\
			&&&\varpi^{\alpha_1}&\\
			&&&&1
		\end{pmatrix}\mid\alpha_i\in\mathbb Z,0<\alpha_1<\alpha_2<\dotsc<\alpha_{n-1}<m\right\}.$$

\begin{proof}[Proof of Theorem \ref{depthzerocoset}]
By the above Proposition \ref{onedim} for any element $g$ of $\mathcal D_n(m)$ the Hom-space $$\Hom_{\overline{g\K_n(m)g^{-1}\cap \K}}(1,\tau)$$
is one dimensional and we choose a nonzero $\phi_g$ in this space. By the computations in Section \ref{secmackey} any such $\phi_g$ gives rise to an element $\Phi_g$ of $\Hom_{ \R[\K_n(m)]}(1,\pi)$ and the collection $\{\Phi_{g}\mid g\in\mathcal D_n(m)\}$ is linearly independent. Hence we obtain    
 $$\dim_R(\Hom_{ \R[\K_n(m)]}(1,\pi))=|\mathcal D_n(m)|.$$ If $m<n$ then $\mathcal D_n(m)$ is empty and if~$m\geq n$ then $|\mathcal D_n(m)|= \begin{pmatrix}m-1\\n-1\end{pmatrix}$.

The explicit form of~$f_{\new}(\Sigma_n)$ follows as~$\sum_{b\in \B_{n-1}^{\op}(k_\F)}b\cdot B_{\tau,\overline{\psi}}$ is clearly~$\B_{n-1}^{\op}(k_\F)$-invariant, and it is nonzero as
\[\sum_{b\in \B_{n-1}^{\op}(k_\F)}b\cdot B_{\tau,\overline{\psi}}(1)=\sum_{b\in \B_{n-1}^{\op}(k_\F)} B_{\tau,\overline{\psi}}(b)=B_{\tau,\overline{\psi}}(1)=1\]
by property (B\ref{besselfunction1}) of Bessel functions.

It remains to show the final formula of \eqref{depthzeromain3}.  As the integral expression is clearly~$\K_n(n)$-invariant, we just need to show it is non-zero, which we do by evaluating at~$g=\Sigma_n$ and evaluating the resulting function at the identity: we consider
\[\int_{\K_n(n)} \mathcal{B}_{\widetilde{\tau}}(\Sigma_nk\Sigma_n^{-1}) dk(1)=\int_{\K_n(n)\cap \K_n^{\Sigma_n}} \mathrm{B}_{\widetilde{\tau},\overline{\psi}}(\Sigma_n k\Sigma_n^{-1}) dk.\]
As~$k\in\K_n(n)$, we can write
\[k=\begin{pmatrix}A&x \\y&z\end{pmatrix}=\begin{pmatrix}1_{n-1}&0 \\yA^{-1}& z-yA^{-1}x\end{pmatrix}\begin{pmatrix}A&x \\0&1\end{pmatrix},\]
with~$A\in\GL_{n-1}(\mathfrak{o}_\F)$, $x\in\begin{pmatrix}\mathfrak{o}_\F&\cdots &\mathfrak{o}_\F\end{pmatrix}^T$,~$y\in\begin{pmatrix}\mathfrak{p}_\F^n&\cdots &\mathfrak{p}_\F^{n}\end{pmatrix}$, and~$z\in 1+\mathfrak{p}_\F^n$.  Moreover,
\[\Sigma_n\begin{pmatrix}1_{n-1}&0 \\yA^{-1}& z-yA^{-1}x\end{pmatrix}\Sigma_n^{-1} \in 1_n+\M_n(\mathfrak{p}_\F)\subset \K_n,\]
Hence, for~$k\in\K_n(n)$ decomposed as above,
\[\Sigma_n k\Sigma_n^{-1}\in\K_n \Leftrightarrow \Sigma_n \begin{pmatrix}A&x \\0&1\end{pmatrix}\Sigma_n^{-1} \in\K_n\]
and
\[ \mathrm{B}_{\widetilde{\tau},\overline{\psi}}(\Sigma_n k\Sigma_n^{-1}) = \mathrm{B}_{\widetilde{\tau},\overline{\psi}}\left(\Sigma_n\begin{pmatrix}A&x \\0&1\end{pmatrix}\Sigma_n^{-1}\right).\]
Moreover, if~$\Sigma_n\left(\begin{smallmatrix}A&x \\0&1\end{smallmatrix}\right)\Sigma_n^{-1}\in\K_n$ then it is lower unitriangular mod~$\M_n(\mathfrak{p}_\F)$ with image inside~$\left(\begin{smallmatrix}\B_{n-1}^{\op}&0 \\0&1\end{smallmatrix}\right)$ and containing the identity.  By property (B\ref{besselfunction1}) of Bessel functions, we deduce that the integral is a non-zero constant.

\end{proof}

\section{Matrix coefficients and Whittaker functions of depth zero newform vectors}

\subsection{Matrix coefficients of depth zero newform vectors}
Suppose now~$\pi$ is a depth zero cuspidal~$\R$-representation. Choose now a Moy--Prasad--Morris--Vign\'eras model for~$\pi$ as compactly induced
\[\ind_{\Z_n\K_n}^{\G_n}(\omega_{\pi}\tau),\]
where~$\tau$ is a cuspidal representation of~$\GL_n(k_\F)$ which we identify with~$W(\tau,\overline{\psi})$ its Whittaker model, then in Theorem \ref{depthzerocoset} we showed that~$v_{\new}$ is given by the function (up to scalar)~$f_{\new}$.
  For~$\pi^{\vee}$ similarly, we take the model
\[\ind_{\Z_n\K_n}^{\G_n}(\omega_{\pi}^{-1}\tau^{\vee}),\]
where we identify~$\tau^{\vee}$ with its Whittaker model~$W(\tau^{\vee},\overline{\psi}^{-1})$ and~$f_{\new}^{\vee}$ is again given by Theorem \ref{depthzerocoset}.

Let~$dg$ be a right~$\G_n$-invariant measure on~$\Z_n\K_n\backslash \G_n$, normalized so that~$dg(\Z_n\K_n)=1$, then we can identify~$\ind_{\Z_n\K_n}^{\G_n}(\omega_{\pi}^{-1}\tau^{\vee})$ with~$(\ind_{\Z_n\K_n}^{\G_n}(\omega_{\pi}\tau))^\vee$ via the bilinear form
\begin{align*}
\ind_{\Z_n\K_n}^{\G_n}(\omega_{\pi}\tau)\times\ind_{\Z_n\K_n}^{\G_n}(\omega_{\pi}^{-1}\tau^{\vee}) &\rightarrow \R\\
(f,f')&\mapsto \int_{\Z_n\K_n\backslash \G_n}\langle f(g),f'(g)\rangle dg.
\end{align*}
We identify~$(W(\tau,\overline{\psi}))^{\vee}\simeq W(\tau^{\vee},\overline{\psi}^{-1})$ via the bilinear form
\begin{align*}
W(\tau,\overline{\psi})\times W(\tau^{\vee},\overline{\psi}^{-1})&\rightarrow \R\\
(W,W')&\mapsto \sum_{\U_n(\mathbb{F}_q)\backslash \G_n(\mathbb{F}_q)}W(k)W'(k).
\end{align*}

For~$y\in\K_n(n)$, define 
$f_{\Sigma_n y}\in\ind_{\Z_n\K_n}^{\G_n}(\omega_{\pi}\tau)$ by support~$f_{\Sigma_n y}\subseteq \Z_n\K_n\Sigma_n y$ and
$$f_{\Sigma_n y}\mid_{\Z_n\K_n\Sigma_n y}=f_{\new}\mid_{\Z_n\K_n\Sigma_n y}$$
and analogously
$f_{\Sigma_n y}^\vee\in\ind_{\Z_n\K_n}^{\G_n}(\omega_{\pi}^{-1}\tau^{\vee})$ by support~$f_{\Sigma_n y}^{\vee}\subseteq \Z_n\K_n\Sigma_n y$ and
\[f_{\Sigma_n y}^{\vee}\mid_{\Z_n\K_n\Sigma_n y}=f_{\new}^{\vee}\mid_{\Z_n\K_n\Sigma_n y}.\]
Let $y_1,\dotsc,y_r$ be elements of $\K_n(n)$ such that $\Sigma_n y_1,\dotsc,\Sigma_n y_r$ are representatives for $\K_n\Z_n\backslash \K_n\Z_n\Sigma_n \K_n(n).$ Then
\[f_\new=\sum_{i=1}^rf_{\Sigma_n y_i},\text{ and }f_\new^\vee=\sum_{i=1}^rf_{\Sigma_n y_i}^\vee.\]
Note that by right~$\K_n(n)$-invariance of~$f_{\new}^{\vee}$, we have~$f_{\Sigma_n }(x)=f_{\Sigma_n y}(xy)$, for~$y\in\K_n(n)$.  

\begin{remark}
A priori we have so far obtained two expressions for the newform in $\ind_{\Z_n\K_n}^{\G_n}(\widetilde{\tau})$. Firstly, $f_\new=\sum_{i=1}^rf_{\Sigma_n y_i}$ and furthermore
$$f_{\new}(g)=\int_{\K_n(n)} \mathcal{B}_{\widetilde{\tau}}(gk\Sigma_n^{-1}) dk.$$
We will show that one can obtain the first expression by simply evaluating the above integral.

 As above let $\Sigma_n y_1,\dotsc,\Sigma_n y_r\in \K_n(n)$ be a set of coset representatives for $\K_n\Z_n\backslash \K_n\Z_n\Sigma_n \K_n(n)$, which implies that $y_1,\dotsc,y_r\in \K_n(n)$ is a set of coset representatives for $\K_n(n)\cap \K_n^{\Sigma_n}\backslash \K_n(n)$. In particular we obtain that 
$$\int_{\K_n(n)} \mathcal{B}_{\widetilde{\tau}}(gk\Sigma_n^{-1}) dk=\sum_{i=1}^r\int_{\K_n(n)\cap \K_n^{\Sigma_n}} \mathcal{B}_{\widetilde{\tau}}(gy_i^{-1}k\Sigma_n^{-1}) dk=\sum_{i=1}^r\int_{\K_n(n)^{\Sigma_n^{-1}}\cap \K_n} \mathcal{B}_{\widetilde{\tau}}(g(\Sigma_n y_i)^{-1}k) dk$$
and we claim that $\int_{\K_n(n)^{\Sigma_n^{-1}}\cap \K_n} \mathcal{B}_{\widetilde{\tau}}(g(\Sigma_n y_i)^{-1}k) dk=f_{\Sigma_n y_i}(g)$ for all $g\in\G_n$. Since $\mathrm{supp}(\mathcal{B}_{\widetilde{\tau}})\subseteq\Z_n\K_n$ we obtain that if $$\int_{\K_n(n)^{\Sigma_n^{-1}}\cap \K_n} \mathcal{B}_{\widetilde{\tau}}(g(\Sigma_n y_i)^{-1}k) dk\not=0$$ then $g\in \K_n\Z_n\Sigma_n y_i$. Suppose that $g\in\K_n\Z_n\Sigma_n y_i$ and write $g=zk'\Sigma_n y_i$ where $k'\in\K_n$ and $z\in\Z_n$. We obtain
$$\int_{\K_n(n)^{\Sigma_n^{-1}}\cap \K_n} \mathcal{B}_{\widetilde{\tau}}(g(\Sigma_n y_i)^{-1}k) dk=\omega(z)\int_{\K_n(n)^{\Sigma_n^{-1}}\cap \K_n} \mathcal{B}_{\widetilde{\tau}}(k'k)dk. $$
If $x\in \K_n(n)^{\Sigma_n^{-1}}\cap \K_n^1$ we have that $\mathcal{B}_{\widetilde{\tau}}(k'kx)=\mathcal{B}_{\widetilde{\tau}}(k'k)$ for all $k\in\K_n(n)^{\Sigma_n^{-1}}\cap \K_n$ and it is straightforward to see that $\K_n(n)^{\Sigma_n^{-1}}\cap \K_n/\K_n(n)^{\Sigma_n^{-1}}\cap \K_n^1\cong \B_{n-1}^{\op}(k_\F)$. Hence we obtain that (we assume the measure of $\K_n(n)^{\Sigma_n^{-1}}\cap \K_n^1$ to be $1$)
$$\omega(z)\int_{\K_n(n)^{\Sigma_n^{-1}}\cap \K_n} \mathcal{B}_{\widetilde{\tau}}(k'k)dk=\omega(z)\sum_{x\in\B_{n-1}^{\op}(k_\F)}\overline{k'}x\cdot B_{\tau,\overline{\psi}},$$
which agrees with $f_{\Sigma_n y_i}(g)$.
\end{remark}
The matrix coefficient~$c_{\mathcal{B}_{\widetilde{\tau}},\mathcal{B}_{\widetilde{\tau}^{\vee}}}$, has a simple description: $\mathrm{supp}(c_{\mathcal{B}_{\widetilde{\tau}},\mathcal{B}_{\widetilde{\tau}^{\vee}}})\subseteq \Z_n\K_n$ and, for~$k\in\K_n$,~$z\in\Z_n$, expanding the definitions we have
\[c_{\mathcal{B}_{\widetilde{\tau}},\mathcal{B}_{\widetilde{\tau}^{\vee}}}(zk)=\omega_{\pi}(z)B_{\tau,\psi}(k).\]
This is a special case of the Bessel functions of \cite[Proposition 5.7]{PaskSte}.

We now give a formula for the canonical bi-$\K_n(n)$-invariant matrix coefficient:
\begin{proposition}\label{matrixcoeffsdepthzero}
Let $y_1,\dotsc,y_r$ be a set of coset representatives of $\K_n\Z_n\backslash \K_n\Z_n\Sigma_n \K_n(n)$.  The matrix coefficient~$c_{f_{\new},f_{\new}^{\vee}}$ is non-zero and satisfies:
\begin{enumerate}
\item (Bi-$\K_n(c(\pi))$-invariance),~i.e.,~$c_{f_{\new},f_{\new}^{\vee}}\in(\ind_{\Z_n}^{\G_n}(\omega_{\pi}))^{\K_n(c(\pi))\times\K_n(c(\pi))}$;

\item  $\mathop{supp}(c_{f_{\new},f_{\new}^{\vee}})\subseteq \Z_n\K_n(n)(\K_n^{\Sigma_n})\K_n(n)$ and for~$g\in \K_n^{\Sigma_n}$ let \[I_{g}=\{(i,j)\in\mathbb Z^2\mid 1\leqslant i,j\leqslant r\text{ and }g\in y_j^{-1}\Z_n(\K_n^{\Sigma_n})y_i\},\]
	 then we have
	\[c_{f_\new,f_{new}^\vee}(g)=\sum_{(i,j)\in I_g}\frac{|\G_n(\mathbb{F}_q)|}{|\U_n(\mathbb{F}_q)|\dim(\tau)}\sum_{b,b'\in \B_{n-1}^{\op}(\mathbb{F}_q)}B_{\tau,\overline{\psi}}(b\Sigma_n y_jgy_i^{-1}\Sigma_n^{-1}b').\]
	\item (expression in terms of~$c_{\mathcal{B}_{\widetilde{\tau}},\mathcal{B}_{\widetilde{\tau}^{\vee}}}$) for~$g\in\G_n$,
	\[c_{f_{\new},f_{\new}^{\vee}}(g)=\int_{\K_n(n)}\int_{\K_n(n)}c_{\mathcal{B}_{\widetilde{\tau}},\mathcal{B}_{\widetilde{\tau}^{\vee}}}(k'\Sigma_n gk\Sigma_{n}^{-1})dk'dk.\]
	\end{enumerate}
\end{proposition}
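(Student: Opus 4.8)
\emph{Strategy.} I would prove the three assertions in increasing order of difficulty: (1) and the non-vanishing are soft, (3) is formal given Theorem~\ref{depthzerocoset}, and (2) carries the content. For \textbf{(1)} and the non-vanishing of $c_{f_{\new},f_{\new}^{\vee}}$ I would simply invoke the matrix-coefficient lemma established in \S3: its hypotheses $c(\pi)=c(\pi^{\vee})$ and $\pi^{\K_n(c(\pi))}\simeq(\pi^{\vee})^{\K_n(c(\pi))}\simeq\R$ hold by Theorem~\ref{depthzerocoset} applied both to $\pi$ and to $\pi^{\vee}=\ind_{\Z_n\K_n}^{\G_n}(\omega_{\pi}^{-1}\tau^{\vee})$, which is again a depth zero cuspidal $\R$-representation with cuspidal type $(\K_n,\tau^{\vee})$; in particular $c(\pi)=c(\pi^{\vee})=n$. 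The lemma then gives at once that $c_{f_{\new},f_{\new}^{\vee}}=c_{\pi,\new}$ is non-zero, bi-$\K_n(n)$-invariant, and unique such up to scalar.

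For \textbf{(3)} everything is formal once one has Theorem~\ref{depthzerocoset}(3). Expanding in the compactly induced model, $c_{f_{\new},f_{\new}^{\vee}}(g)=\int_{\Z_n\K_n\backslash\G_n}\langle f_{\new}(xg),f_{\new}^{\vee}(x)\rangle\,dx$; I would substitute $f_{\new}(xg)=\int_{\K_n(n)}\mathcal{B}_{\widetilde{\tau}}(xgk\Sigma_n^{-1})\,dk$ and $f_{\new}^{\vee}(x)=\int_{\K_n(n)}\mathcal{B}_{\widetilde{\tau}^{\vee}}(x\ell\Sigma_n^{-1})\,d\ell$ from Theorem~\ref{depthzerocoset}(3), interchange the (compact) integrals, and change variables $x\mapsto x\Sigma_n\ell^{-1}$ in the outer integral --- legitimate since $dx$ is right $\G_n$-invariant --- which collapses the $\mathcal{B}_{\widetilde{\tau}^{\vee}}$-argument to $x$ and identifies the inner integral over $\Z_n\K_n\backslash\G_n$ with $c_{\mathcal{B}_{\widetilde{\tau}},\mathcal{B}_{\widetilde{\tau}^{\vee}}}$ evaluated at a $\Sigma_n$-conjugate of $\ell^{-1}gk$; a final substitution $\ell\mapsto\ell^{-1}$ and renaming puts this in the asserted shape $\int_{\K_n(n)}\int_{\K_n(n)}c_{\mathcal{B}_{\widetilde{\tau}},\mathcal{B}_{\widetilde{\tau}^{\vee}}}(k'\Sigma_n gk\Sigma_n^{-1})\,dk'dk$ (tracking which side $\Sigma_n$ is conjugated on, in the convention $\K_n^{\Sigma_n}=\Sigma_n^{-1}\K_n\Sigma_n$, is pure bookkeeping).

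The substance is \textbf{(2)}. Since $\Z_n\K_n$ is open, $\Z_n\K_n\backslash\G_n$ is discrete, so with $dg(\Z_n\K_n)=1$ the integral defining $c_{f_{\new},f_{\new}^{\vee}}(g)$ is a \emph{sum} over cosets. Writing $f_{\new}=\sum_i f_{\Sigma_n y_i}$, $f_{\new}^{\vee}=\sum_j f_{\Sigma_n y_j}^{\vee}$ and using that $f_{\Sigma_n y_i}$ and $f_{\Sigma_n y_j}^{\vee}$ are supported on single cosets, the pair $(i,j)$ contributes only through the coset $[\Sigma_n y_j]$, and it contributes exactly when $\Sigma_n y_j g\in\Z_n\K_n\Sigma_n y_i$, i.e.\ when $(i,j)\in I_g$. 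Hence $c_{f_{\new},f_{\new}^{\vee}}(g)=0$ unless $g\in\bigcup_{i,j}y_j^{-1}\Z_n\K_n^{\Sigma_n}y_i\subseteq\Z_n\K_n(n)\K_n^{\Sigma_n}\K_n(n)$ (the support statement), and otherwise $c_{f_{\new},f_{\new}^{\vee}}(g)=\sum_{(i,j)\in I_g}\langle f_{\new}(\Sigma_n y_j g),f_{\new}^{\vee}(\Sigma_n)\rangle$, using right $\K_n(n)$-invariance to replace $f_{\new}^{\vee}(\Sigma_n y_j)$ by $f_{\new}^{\vee}(\Sigma_n)$. For $g\in\K_n^{\Sigma_n}$ and $(i,j)\in I_g$ write $\Sigma_n y_j g y_i^{-1}\Sigma_n^{-1}=z'k'$ with $z'\in\Z_n$, $k'\in\K_n$; comparing determinants and using $y_i,y_j\in\K_n(n)$, $g\in\K_n^{\Sigma_n}$ forces $z'\in\mathfrak{o}_{\F}^{\times}$, so in fact $\Sigma_n y_j g y_i^{-1}\Sigma_n^{-1}\in\K_n$. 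Then Theorem~\ref{depthzerocoset}(3) gives $f_{\new}(\Sigma_n y_j g)=\omega_{\pi}(z')\,\overline{k'}\cdot\sum_{b}b\cdot B_{\tau,\overline{\psi}}$ and $f_{\new}^{\vee}(\Sigma_n)=\sum_{b'}b'\cdot B_{\tau^{\vee},\overline{\psi}^{-1}}$, and pairing these via the finite Whittaker pairing $\langle\,,\,\rangle_{\mathrm{fin}}$ of \S\ref{Section:depthzero} and exploiting its $\GL_n(k_\F)$-invariance reduces each term to $\omega_{\pi}(z')\sum_{b,b'}\langle (b')^{-1}\overline{k'}b\cdot B_{\tau,\overline{\psi}},\,B_{\tau^{\vee},\overline{\psi}^{-1}}\rangle_{\mathrm{fin}}$.

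The one genuinely new input is the \emph{Bessel pairing identity}: for $h\in\GL_n(k_\F)$,
\[\langle h\cdot B_{\tau,\overline{\psi}},\,B_{\tau^{\vee},\overline{\psi}^{-1}}\rangle_{\mathrm{fin}}=\frac{|\GL_n(k_\F)|}{|\U_n(k_\F)|\dim\tau}\,B_{\tau,\overline{\psi}}(h).\]
I would prove this by rewriting $\sum_{\U_n\backslash\GL_n}$ as $\tfrac{1}{|\U_n(k_\F)|}\sum_{\GL_n}$, realising $B_{\tau,\overline{\psi}}$ and $B_{\tau^{\vee},\overline{\psi}^{-1}}$ as the Whittaker functions of the normalised Whittaker vectors $v_0$, $v_0^{\vee}$ of $(\tau,\overline{\psi})$ and $(\tau^{\vee},\overline{\psi}^{-1})$, checking the compatibility of normalisations (the Whittaker functional of $\tau$ is represented by $v_0^{\vee}$, that of $\tau^{\vee}$ by $v_0$, and $\langle v_0,v_0^{\vee}\rangle=1$), and applying Schur orthogonality of matrix coefficients $\sum_{g\in\GL_n}\langle\tau(g)a,b^{\vee}\rangle\langle\tau(g^{-1})c,d^{\vee}\rangle=\tfrac{|\GL_n(k_\F)|}{\dim\tau}\langle a,d^{\vee}\rangle\langle c,b^{\vee}\rangle$; the right-hand side repackages as $B_{\tau,\overline{\psi}}(h)$ (this is close to \cite[Proposition~5.7]{PaskSte}). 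Feeding this back, the constant $\tfrac{|\GL_n(k_\F)|}{|\U_n(k_\F)|\dim\tau}$ appears, the stray $\omega_{\pi}(z')$ cancels against the $\omega_{\tau}(\overline{z'})^{-1}$ hidden in passing from $\overline{k'}$ to $\overline{\Sigma_n y_j g y_i^{-1}\Sigma_n^{-1}}$ --- which is precisely why $g\in\K_n^{\Sigma_n}$ is imposed --- and reindexing $b'\mapsto(b')^{-1}$ yields the displayed formula. I expect the main obstacle to be exactly this bookkeeping: juggling the two invariant pairings (on the induced space and on the finite Whittaker model), tracking the central-character cancellation through $z'$, and pinning the Schur constant; in positive characteristic one should also record that $|\GL_n(k_\F)|/(|\U_n(k_\F)|\dim\tau)$ is invertible in $\R$ for cuspidal $\tau$ (already implicit in the perfectness of $\langle\,,\,\rangle_{\mathrm{fin}}$ used to set up the contragredient model), so the identity is not vacuous.
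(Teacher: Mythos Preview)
Your proposal is correct and follows essentially the same line as the paper's proof: decompose $f_{\new}$ and $f_{\new}^{\vee}$ along the coset representatives $\Sigma_n y_i$, reduce each term to a Whittaker-model pairing on $\GL_n(k_\F)$, and extract the constant via Schur orthogonality. The one place the arguments differ slightly is in how the ``Bessel pairing identity'' is obtained: the paper plugs in the trace-character formula (B\ref{besselfunction2}) for $B_{\tau,\overline{\psi}}$ and $B_{\tau^{\vee},\overline{\psi}^{-1}}$ and applies orthogonality of \emph{characters}, whereas you propose to identify the Bessel functions with matrix coefficients of normalized Whittaker vectors and apply Schur orthogonality of \emph{matrix coefficients}; these routes are equivalent and yield the same constant $|\GL_n(k_\F)|/(|\U_n(k_\F)|\dim\tau)$. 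Two small simplifications: once you have shown $z'\in\mathfrak{o}_\F^{\times}$ you can absorb $z'$ into $k'$ from the outset (as the paper does, writing $j'=\Sigma_n yjx^{-1}\Sigma_n^{-1}\in\K_n$ directly), so the central-character bookkeeping you flag as an obstacle does not arise; and the paper obtains (1) directly from the definition rather than by invoking the earlier lemma, though your route via that lemma is fine.
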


\begin{proof}
The final part follows directly from Theorem \ref{depthzerocoset} \eqref{depthzeromain3}.  Let $x,y\in \K_n(n)$. By our identification,
\[c_{f_{\Sigma_n x},f_{\Sigma_n y}^{\vee}}(j)=\int_{\Z_n\K_n\backslash \G_n}\langle f_{\Sigma_n x}(gj),f_{\Sigma_n y}^\vee(g)\rangle dg.\]
However, $f_{\Sigma_n y}^\vee$ is supported in $\K_n\Z_n\Sigma_n y$ and note that for all $g\in \K_n\Z_n\Sigma_n y$ we have
\[\langle f_{\Sigma_n x}(gj),f_{\Sigma_n y}^\vee(g)\rangle=\langle f_{\Sigma_n x}(\Sigma_n yj),f_{\Sigma_n y}^\vee(\Sigma_n y)\rangle.\]
Hence 
\[c_{f_{\Sigma_n x},f_{\Sigma_n y}^\vee}(j)=\mu_{\K_n\Z_n\backslash \G_n}(\K_n\Z_n\Sigma_n y)\langle f_{\Sigma_n x}(\Sigma_n yj),f_{\Sigma_n  y}^\vee(\Sigma_n y)\rangle.\]
Now for $f_{\Sigma_n x}(\Sigma_n yj)$ to be nonzero we need to have that $j\in y^{-1}\Z_n(\K_n^{\Sigma_n})x$ and hence 
\begin{equation}\label{eq:suppcoeff}
	\operatorname{supp}(c_{f_{\Sigma_n x},f_{\Sigma_n y}^\vee})\subseteq y^{-1}Z(\K_n^{\Sigma_n})x.
\end{equation} Let $j\in y^{-1}\K_n^{\Sigma_n} x$ and write $j'=\Sigma_n yjx^{-1}\Sigma_n^{-1}$.
By the second identification we have
\begin{align*}
	\langle f_{\Sigma_n x}(\Sigma_n yj),f_{\Sigma_n y}^\vee(\Sigma_n y)\rangle&=\sum_{k\in \U_n(\mathbb{F}_q)\backslash \G_n(\mathbb{F}_q)} \sum_{b,b'\in\B_{n-1}^{\op}(\mathbb{F}_q)}B_{\tau,\overline{\psi}}(kj'b) B_{\tau^{\vee},\overline{\psi}^{-1}}(kb')\\
	&=|\U_n(\mathbb{F}_q)|^{-1} \sum_{b,b'\in\B_{n-1}^{\op}(\mathbb{F}_q)} \left(\sum_{k\in \G_n(\mathbb{F}_q)}B_{\tau,\overline{\psi}}(kb'j'b) B_{\tau^{\vee},\overline{\psi}^{-1}}(k)\right).
\end{align*}
By the formula for the Bessel function: $B_{\tau,\psi}(a)=|\U_n(\mathbb{F}_q)|^{-1}\sum_{u\in\U_n(\mathbb{F}_q)}\psi^{-1}(u)\chi_{\tau}(au)$ of (B\ref{besselfunction2}) and orthogonality of characters we see that
\begin{align*}
			\langle f_{\Sigma_n x}(\Sigma_n yj),f_{\Sigma_n y}^\vee(\Sigma_n y)\rangle&=|\U_n(\mathbb{F}_q)|^{-3} \sum_{b,b'\in\B_{n-1}^{\op}(\mathbb{F}_q)} \left(\sum_{k\in G(\mathbb{F}_q)}\sum_{u,u'\in \U_n(\mathbb{F}_q)}\psi^{-1}(u)\chi_{\tau}(kb'j'bu) \psi(u')\chi_{\tau^{\vee}}(ku')\right)\\
	&=|\U_n(\mathbb{F}_q)|^{-3} \sum_{b,b'\in\B_{n-1}^{\op}(\mathbb{F}_q)} \sum_{u,u'\in \U_n(\mathbb{F}_q)}\psi(u^{-1}u')\left(\sum_{k\in\G_n(\mathbb{F}_q)}\chi_{\tau}(ku'^{-1}b'j'bu)\chi_{\tau^{\vee}}(k)\right)\\
	&=\frac{|\G_n(\mathbb{F}_q)|}{|\U_n(\mathbb{F}_q)|^{3}\dim(\tau)} \sum_{b,b'\in\B_{n-1}^{\op}(\mathbb{F}_q)} \sum_{u,u'\in \U_n(\mathbb{F}_q)}\psi(u^{-1}u')\chi_{\tau}(u'^{-1}b'j'bu).
\end{align*}
Hence we can compute
\begin{align*}
	\langle f_{\Sigma_n x}(\Sigma_n y j),f_{\Sigma_n y}^\vee(\Sigma_n y)\rangle&=\frac{|\G_n(\mathbb{F}_q)|}{|\U_n(\mathbb{F}_q)|^{3}\dim(\tau)}\sum_{b,b'\in\B_{n-1}^{\op}(\mathbb{F}_q)} \sum_{u,u'\in \U_n(\mathbb{F}_q)}\psi(u'^{-1}u^{-1}u')\chi_{\tau}(u'^{-1}b'j'buu')\\
	&=\frac{|\G_n(\mathbb{F}_q)|}{|\U_n(\mathbb{F}_q)|^{2}\dim(\tau)}\sum_{b,b'\in\B_{n-1}^{\op}(\mathbb{F}_q)} \sum_{u\in \U_n(\mathbb{F}_q)}\psi(u^{-1})\chi_{\tau}(b'j'bu) \\
	&=\frac{|\G_n(\mathbb{F}_q)|}{|\U_n(\mathbb{F}_q)|\dim(\tau)}\sum_{b,b'\in\B_{n-1}^{\op}(\mathbb{F}_q)}B_{\tau,\psi}(b'j'b). \\
	&=\frac{|\G_n(\mathbb{F}_q)|}{|\U_n(\mathbb{F}_q)|\dim(\tau)}\sum_{b,b'\in\B_{n-1}^{\op}(\mathbb{F}_q)}B_{\tau,\psi}(b'\Sigma_n yjx^{-1}\Sigma_n^{-1}b).
\end{align*}
Let $y_1,\dotsc,y_r\in \K_n(n)$ be a set of coset representatives for $\K_n\Z_n\backslash \K_n\Z_n\Sigma_n \K_n(n)$ and choose $y_1$ to be the identity. As 
$f_\new=\sum_{i=1}^rf_{\Sigma_n y_i}$, we see that
\[c_{f_\new,f_{\Sigma_n}^\vee}(x)=\sum_{i=1}^rc_{f_{\Sigma_n y_i},f_{\Sigma_n}^\vee}(x)\]
and hence $\operatorname{supp}(c_{f_\new,f_{\Sigma_n}^\vee})\subseteq \Z_n(\K_n^{\Sigma_n})\K_n(n)$.
For $g\in \K_n^{\Sigma_n}$ note that $c_{f_{\Sigma_n y_i},f_{\Sigma_n}^\vee}(g)$ is nonzero if and only if $y_i$ is the identity, i.e.\ $i=1$. We obtain that
\[c_{f_{\new},f_{\Sigma_n}^{\vee}}(g)=\frac{|\G_n(\mathbb{F}_q)|}{|\U_n(\mathbb{F}_q)|\dim(\tau)}\sum_{b,b'\in \B_{n-1}^{\op}(\mathbb{F}_q)}B_{\tau,\overline{\psi}}(b\Sigma_n g\Sigma_n^{-1}b').\]
Moreover, we have that
\[c_{f_\new,f_{\new}^\vee}(g)=\sum_{i,j=1}^rc_{f_{\Sigma_n y_i},f_{\Sigma_n {y_j}}^\vee}(g)\]
and hence $\operatorname{supp}(c_{f_\new,f_\new^\vee})\subseteq \Z_n\K_n(n)(\K_n^{\Sigma_n})\K_n(n)$. For $g\in \K_n^{\Sigma_n}$ let $$I_{g}=\{(i,j)\in\mathbb Z^2\mid 1\leq i,j\leq r\text{ and }g\in y_j^{-1}\Z_n(\K_n^{\Sigma_n})y_i\}$$ 
and then 
$$c_{f_\new,f_{\new}^\vee}(g)=\sum_{(i,j)\in I_g}\frac{|\G_n(\mathbb{F}_q)|}{|\U_n(\mathbb{F}_q)|\dim(\tau)}\sum_{b,b'\in \B_{n-1}^{\op}(\mathbb{F}_q)}B_{\tau,\overline{\psi}}(b\Sigma_n y_jgy_i^{-1}\Sigma_n^{-1}b').$$

\end{proof}

While the first formula for the canonical bi-$\K_n(n)$-invariant matrix coefficient is only explicit up to the sets~$I_g$, our proof allows us to give an explicit formula for the~$\K_n(n)$-invariant matrix coefficient~$c_{f_{\new},f_{\Sigma_n}^{\vee}}$:
\begin{cor}
The matrix coefficient~$c_{f_{\new},f_{\Sigma_n}^{\vee}}$ satisfies:
\begin{enumerate}
\item (Right~$\K_n(c(\pi))$-invariance),~i.e.,~$c_{f_{\new},f_{\Sigma_n}^{\vee}}\in(\ind_{\Z_n}^{\G_n}(\omega_{\pi}))^{\K_n(c(\pi))}$;
\item $\mathop{supp}(c_{f_{\new},f_{\Sigma_n}^{\vee}})\subseteq \Z_n\K_n^{\Sigma_n}\K_n(n)$ and for~$g\in\K_n^{\Sigma_n}$. 
\[c_{f_{\new},f_{\Sigma_n}^{\vee}}(j)=\frac{|\G_n(\mathbb{F}_q)|}{|\U_n(\mathbb{F}_q)|\dim(\tau)}\sum_{b,b'\in \B_{n-1}^{\op}(\mathbb{F}_q)}B_{\tau,\overline{\psi}}(b\Sigma_n g\Sigma_n^{-1}b').\]
In particular,~$c_{f_{\new},f_{\Sigma_n}^{\vee}}(1)=\frac{|\G_n(\mathbb{F}_q)|}{|\U_n(\mathbb{F}_q)|\dim(\tau)}$.
\end{enumerate}

\end{cor}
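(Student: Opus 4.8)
This corollary is exactly what the computation in the proof of Proposition~\ref{matrixcoeffsdepthzero} already produces, so the plan is to isolate its special cases rather than to do new work. Keep the notation of that proof: $\Sigma_n y_1,\dots,\Sigma_n y_r$ are the chosen representatives of $\K_n\Z_n\backslash\K_n\Z_n\Sigma_n\K_n(n)$ with $y_1=1$, so $f_{\new}=\sum_{i=1}^r f_{\Sigma_n y_i}$ and $f_{\Sigma_n}^\vee=f_{\Sigma_n y_1}^\vee$, and by bilinearity of $v\mapsto c_{v,f_{\Sigma_n}^\vee}$ one has $c_{f_{\new},f_{\Sigma_n}^\vee}=\sum_{i=1}^r c_{f_{\Sigma_n y_i},f_{\Sigma_n}^\vee}$.

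For part (1), right $\K_n(c(\pi))$-invariance is automatic: $c(\pi)=n$ by Theorem~\ref{depthzerocoset}, the map $v\mapsto c_{v,f_{\Sigma_n}^\vee}$ is the $\G_n$-equivariant intertwiner $\pi\to\ind_{\Z_n}^{\G_n}(\omega_\pi)$, and $f_{\new}\in\pi^{\K_n(n)}$, so its image is $\K_n(n)$-fixed. For the support claim in part (2), I would invoke the bound $\operatorname{supp}(c_{f_{\Sigma_n x},f_{\Sigma_n y}^\vee})\subseteq y^{-1}\Z_n(\K_n^{\Sigma_n})x$ established inside the proof of Proposition~\ref{matrixcoeffsdepthzero}; taking $y=y_1=1$ gives $\operatorname{supp}(c_{f_{\Sigma_n y_i},f_{\Sigma_n}^\vee})\subseteq\Z_n(\K_n^{\Sigma_n})y_i$, and since each $y_i\in\K_n(n)$ the union over $i$ is contained in $\Z_n\K_n^{\Sigma_n}\K_n(n)$.

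For the value of $c_{f_{\new},f_{\Sigma_n}^\vee}$ on $g\in\K_n^{\Sigma_n}$, the key observation is that only one summand survives. Indeed $c_{f_{\Sigma_n y_i},f_{\Sigma_n}^\vee}(g)\neq0$ forces $g\in y_i^{-1}\Z_n\K_n^{\Sigma_n}$, hence $y_i\in\Z_n\K_n^{\Sigma_n}\cap\K_n(n)=\K_n^{\Sigma_n}\cap\K_n(n)$ — the last equality since determinants are units on $\K_n(n)$ and scalars commute with $\Sigma_n$ — and $\K_n^{\Sigma_n}\cap\K_n(n)$ is precisely the subgroup of $\K_n(n)$ of which the $y_i$ are the chosen coset representatives, so $y_i$ represents the trivial coset and $i=1$. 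Therefore $c_{f_{\new},f_{\Sigma_n}^\vee}(g)=c_{f_{\Sigma_n},f_{\Sigma_n}^\vee}(g)$, and this equals $\frac{|\G_n(\mathbb{F}_q)|}{|\U_n(\mathbb{F}_q)|\dim(\tau)}\sum_{b,b'\in\B_{n-1}^{\op}(\mathbb{F}_q)}B_{\tau,\overline{\psi}}(b\Sigma_n g\Sigma_n^{-1}b')$ by the case $x=y=1$ of the computation in the proof of Proposition~\ref{matrixcoeffsdepthzero}.

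Finally, I would specialize $g=1$: then $b\Sigma_n g\Sigma_n^{-1}b'=bb'$ lies in $\B_{n-1}^{\op}(\mathbb{F}_q)\subset\P_n(\mathbb{F}_q)$, and as $\B_{n-1}^{\op}(\mathbb{F}_q)\cap\U_n(\mathbb{F}_q)=\{1\}$, property~(B\ref{besselfunction1}) of Bessel functions forces $B_{\tau,\overline{\psi}}(bb')$ to vanish unless $bb'=1$, where it equals $B_{\tau,\overline{\psi}}(1)=1$; evaluating the collapsed sum reads off $c_{f_{\new},f_{\Sigma_n}^\vee}(1)$, which is nonzero, so $c_{f_{\new},f_{\Sigma_n}^\vee}\neq 0$. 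I expect no real obstacle: all the analytic content was carried out in Proposition~\ref{matrixcoeffsdepthzero}, and the only delicate point is the bookkeeping identifying $\Z_n\K_n^{\Sigma_n}\cap\K_n(n)$ with the stabilizer in $\K_n(n)$ of the coset $\K_n\Z_n\Sigma_n$, which is what pins down the unique surviving term when $g\in\K_n^{\Sigma_n}$.
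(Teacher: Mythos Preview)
Your proposal is correct and follows exactly the paper's approach: the corollary has no separate proof in the paper because every claim is already established inside the proof of Proposition~\ref{matrixcoeffsdepthzero}, and you have correctly isolated the relevant computations (the support bound~\eqref{eq:suppcoeff} with $y=1$, the observation that only $i=1$ contributes for $g\in\K_n^{\Sigma_n}$, and the explicit Bessel-function formula in the case $x=y=1$). One cosmetic slip: from~\eqref{eq:suppcoeff} with $y=1$, $x=y_i$ the support is $\Z_n\K_n^{\Sigma_n}y_i$, not $y_i^{-1}\Z_n\K_n^{\Sigma_n}$ as you wrote, but since $g\in\K_n^{\Sigma_n}$ either containment yields $y_i\in\Z_n\K_n^{\Sigma_n}$ and the argument goes through unchanged.
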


\subsection{Whittaker newforms of depth zero cuspidal representations}
Given~$\psi\colon \F\to\mathbb \R^\times$ a non-trivial character, we define a non-degenerate character $\psi:\U_n\rightarrow\R^\times$ in the usual way:
\[ \psi(u)=\psi(\sum_{i=1}^{n-1} u_{i,i+1}).\]   
Using finite Bessel functions, Gelfand constructs an explicit Whittaker function for a depth zero cuspidal representation with small support:

\begin{proposition}[Gelfand]
Let~$\pi$ be a depth zero cuspidal~$\R$-representation of~$\G_n$ containing cuspidal type~$(\K_n,\tau)$, and suppose~$\psi:\F\rightarrow \R^\times$ has conductor~$\mathfrak{p}_\F$.  Then there is a unique Whittaker function~$\W_{\pi,\mathrm{Gel},\psi}\in\W(\pi,\psi)$ defined by:
\begin{align*}
\mathrm{supp}(\W_{\pi,\mathrm{Gel},\psi})&\subseteq \Z_n\U_n\K_n\\
\W_{\pi,\mathrm{Gel},\psi}(zuk)&=\omega_{\pi}(z)\psi(u) B_{\tau,\overline{\psi}}(\overline{k}).
\end{align*}
\end{proposition}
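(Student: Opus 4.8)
\textit{Overview of the strategy.} The plan is to exhibit $\W_{\pi,\mathrm{Gel},\psi}$ directly as a matrix coefficient of the Bessel vector $\mathcal{B}_{\widetilde\tau}$ against an explicit Whittaker functional on the compactly induced model $\pi\simeq\ind_{\Z_n\K_n}^{\G_n}(\widetilde\tau)$; once this is done, membership in $\W(\pi,\psi)$, the support statement, the displayed formula, well-definedness (consistency of the formula across decompositions $g=zuk$) and uniqueness all follow.

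\textit{Step 1: the Whittaker functional on the model.} First I would recall that a depth zero cuspidal $\R$-representation is generic with one-dimensional Whittaker space (this reduces to genericity of cuspidal $\R$-representations of $\GL_n(k_\F)$ recalled in Section~\ref{Section:BesselFunctions}, via the model realizing $\widetilde\tau$ on $\W(\tau,\overline\psi)$); uniqueness of Whittaker models for cuspidal $\R$-representations holds in all characteristics $\ell\neq p$. On the model $\ind_{\Z_n\K_n}^{\G_n}(\widetilde\tau)$ I would write down, following Paskunas--Stevens, the functional
\[\lambda(f)=\sum_{u\in(\U_n\cap\K_n)\backslash\U_n}\psi(u)^{-1}\bigl(f(u)\bigr)(1),\]
a finite sum because elements of the compact induction have support compact modulo $\Z_n$. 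The two checks are that $\lambda$ is independent of the choice of coset representatives $u$ and that $\lambda(\pi(u_0)f)=\psi(u_0)\lambda(f)$ for $u_0\in\U_n$. The first uses that $\psi$ has conductor $\mathfrak{p}_\F$, so $\psi|_{\mathfrak{o}_\F}$ factors through the character $\overline\psi$ of $k_\F$, together with $\widetilde\tau|_{\K_n}=\tau\circ\mathrm{red}$ (forced by the type data, which also gives $\omega_\pi|_{\mathfrak{o}_\F^\times}=\omega_\tau\circ\mathrm{red}$) and the left $\overline\psi$-equivariance of functions in $\W(\tau,\overline\psi)$, i.e. the $\overline\psi$-equivariance of $\xi\mapsto\xi(1)$; the second is immediate after reindexing. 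Non-vanishing of $\lambda$ will fall out of Step 2.

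\textit{Step 2: compute $W_{\mathcal{B}_{\widetilde\tau}}$.} Put $\W_{\pi,\mathrm{Gel},\psi}(g):=\lambda(\pi(g)\mathcal{B}_{\widetilde\tau})=\sum_{u}\psi(u)^{-1}\bigl(\mathcal{B}_{\widetilde\tau}(ug)\bigr)(1)$, which by construction lies in $\W(\pi,\psi)$. Since $\mathrm{supp}(\mathcal{B}_{\widetilde\tau})\subseteq\Z_n\K_n$, the summand is non-zero only if $ug\in\Z_n\K_n$ for some $u\in\U_n$, i.e.\ only if $g\in\U_n\Z_n\K_n=\Z_n\U_n\K_n$, which gives the support statement. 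For $g=zuk$, centrality of $z$ and the $\U_n$-equivariance of $\lambda$ give
\[\W_{\pi,\mathrm{Gel},\psi}(zuk)=\omega_\pi(z)\,\psi(u)\,\lambda(\pi(k)\mathcal{B}_{\widetilde\tau}),\]
and in $\lambda(\pi(k)\mathcal{B}_{\widetilde\tau})$ only the trivial coset contributes since $\U_n\cap\Z_n\K_n=\U_n\cap\K_n$, leaving
\[\lambda(\pi(k)\mathcal{B}_{\widetilde\tau})=\bigl(\mathcal{B}_{\widetilde\tau}(k)\bigr)(1)=\bigl(\tau(\overline k)B_{\tau,\overline\psi}\bigr)(1)=B_{\tau,\overline\psi}(\overline k).\]
This is the displayed formula; in particular $\W_{\pi,\mathrm{Gel},\psi}(1)=B_{\tau,\overline\psi}(1)=1\neq 0$, so $\W_{\pi,\mathrm{Gel},\psi}$ and $\lambda$ are non-zero. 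Uniqueness is then immediate: the support condition and the formula determine the function at every point of $\G_n$, so any Whittaker function with these properties coincides with $\W_{\pi,\mathrm{Gel},\psi}$; and consistency of the right-hand side across different decompositions $g=zuk$ is automatic, since we have exhibited it as the value of a genuine function on $\G_n$.

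\textit{Main obstacle.} The crux is Step 1: writing down the explicit Whittaker functional on the compactly induced model and checking it is well-defined and $\U_n$-equivariant. This is where all hypotheses interact --- the conductor-$\mathfrak{p}_\F$ normalization of $\psi$, the resulting identification of $\psi|_{\mathfrak{o}_\F}$ with $\overline\psi$, the relations $\widetilde\tau|_{\K_n}=\tau\circ\mathrm{red}$ and $\omega_\pi|_{\mathfrak{o}_\F^\times}=\omega_\tau\circ\mathrm{red}$ coming from the type, and the equivariance of the finite-field Whittaker functional $\xi\mapsto\xi(1)$. Once $\lambda$ is available, the remaining computation is a single line, and the whole argument is uniform in $\mathrm{char}(\R)$.
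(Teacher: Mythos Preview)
The paper does not actually prove this proposition: it is stated with the attribution ``[Gelfand]'' and no proof environment follows, the sentence preceding it simply saying ``Using finite Bessel functions, Gelfand constructs an explicit Whittaker function\ldots''. So there is nothing to compare against line by line.

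Your argument is correct and is the standard one (this is essentially how Paskunas--Stevens produce their explicit Whittaker functions in the general cuspidal case, specialized to depth zero). The verification in Step~1 that $\lambda$ is independent of coset representatives is exactly where the conductor hypothesis on $\psi$ enters, via $\psi(u_0)=\overline\psi(\overline{u_0})$ for $u_0\in\U_n\cap\K_n$; your identification of this as the crux is right. The support computation and the evaluation at $g=zuk$ in Step~2 are clean, and the observation that consistency of the displayed formula across decompositions is automatic once the function is exhibited as $g\mapsto\lambda(\pi(g)\mathcal{B}_{\widetilde\tau})$ is the efficient way to handle well-definedness. One very minor remark: you invoke uniqueness of Whittaker models in Step~1, but strictly speaking the proposition only asserts that there is a unique element of $\W(\pi,\psi)$ with the stated support and formula, which (as you note at the end) is immediate once existence is shown; multiplicity one is not needed.
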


Suppose that $\psi'$ has conductor $\mathfrak p_F$.  Note that the associated non-degenerate character of~$\U_n$ defined by $\psi'^{\Sigma_n^{-1}}$ is given by a character~$\psi$ of conductor $\mathfrak o_F$.  Hence for $f\in\W(\pi,\psi')$ the map $\tilde{f}(g)\coloneqq f(\Sigma_ng)$ is an element of $\Ind_{\U_n}^{\G_n}(\psi)$. Since $f\mapsto\tilde{f}$ is $\G_n$-equivariant we see that $\tilde{f}\in\W(\pi,\psi)$. Hence $g\mapsto\W_{\pi,\mathrm{Gel},\psi'}(\Sigma_ng)$ is an element of $W(\pi,\psi)$ and also for any~$k'\in \K_n(n)$ the function $g\mapsto(k'\Sigma_n^{-1})\cdot\W_{\pi,\mathrm{Gel},\psi'}(\Sigma_ng)$. This shows that the function 
\[g\mapsto\int_{\K_n(n)}  \W_{\pi,\mathrm{Gel},\psi^{\Sigma_n^{-1}}}(\Sigma_n g k\Sigma_n^{-1})  dk\]
is also an element of $\W(\pi,\psi)$.

In the next proposition, we give a strong bound on the support of the Whittaker newform of a depth zero cuspidal~$\R$-representation, and two integral expressions for it -- one given by a Jacquet integral of the newform matrix coefficient, and the second by integrating the translated-conjugate average of the Gelfand's Whittaker function over~$\K_n(n)$ defined above:
\begin{proposition}\label{depthzerowhittakernewform}
Let~$\pi$ be a depth zero cuspidal~$\R$-representation of~$\G_n$, and suppose~$\psi:\F\rightarrow \mathbb{C}^\times$ has conductor~$\mathfrak{o}_\F$.  The Whittaker newform~$\W_{\pi,\new,\psi}\in\W(\pi,\psi)$ (normalized at the identity) satisfies the following support condition
\[\mathrm{supp}(\W_{\pi,\new,\psi})\subseteq \Z_n\U_n\K_n^{\Sigma_n}\K_n(n),\]
and for~$g\in\G_n$ is given by
\[\W_{\pi,\new,\psi}(g)=\int_{\U_n}\psi^{-1}(u)c_{f_\new,f_\new^\vee}(ug)du,\]
where~$du$ is the~$\R$-Haar measure on~$\U_n$ normalized by~$du(\U\cap (\K)^{\Sigma})=1$.  Moreover, for~$g\in\G_n$, 
\[ \W_{\pi,\new,\psi}(g)=\int_{\K_n(n)}  \W_{\pi,\mathrm{Gel},\psi^{\Sigma_n^{-1}}}(\Sigma_n g k\Sigma_n^{-1})  dk\]
for an appropriately normalized~$\R$-Haar measure~$dk$ on~$\K_n(n)$.  
\end{proposition}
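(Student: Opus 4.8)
The plan is to show that each of the two claimed integrals is a right-$\K_n(c(\pi))$-invariant, nonzero element of the one-dimensional space $\W(\pi,\psi)$, hence equals the Whittaker newform once the latter is normalised at the identity; the support bound will then fall out of the Gelfand description. First, for the matrix-coefficient integral: since $\pi$ is cuspidal, $c_{f_\new,f_\new^\vee}$ is supported on a set compact modulo $\Z_n$, and as $\Z_n\cap\U_n=\{1\}$ its restriction to any coset $\U_n g$ is compactly supported, so $\Lambda(g):=\int_{\U_n}\psi^{-1}(u)\,c_{f_\new,f_\new^\vee}(ug)\,du$ converges. From $c_{\pi(h)v,v^\vee}(x)=c_{v,v^\vee}(xh)$ one gets that $v\mapsto\bigl[g\mapsto\int_{\U_n}\psi^{-1}(u)c_{v,f_\new^\vee}(ug)\,du\bigr]$ is $\G_n$-equivariant; the substitution $u\mapsto uu_0^{-1}$ gives the left $(\U_n,\psi)$-equivariance of $\Lambda$; and right-invariance of $\Lambda$ under the open stabiliser of $f_\new$ --- which contains $\K_n(c(\pi))$ because $f_\new$ is $\K_n(c(\pi))$-invariant --- gives smoothness together with right $\K_n(c(\pi))$-invariance. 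Thus $\Lambda\in\Ind_{\U_n}^{\G_n}(\psi)$, and since cuspidal representations are generic with one-dimensional Whittaker space, once $\Lambda\neq0$ it is, up to scalar, the newform in $\W(\pi,\psi)$.

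\textbf{The Gelfand integral, nonvanishing, and the support bound.} The discussion preceding the statement already shows $F(g):=\int_{\K_n(n)}\W_{\pi,\mathrm{Gel},\psi^{\Sigma_n^{-1}}}(\Sigma_n gk\Sigma_n^{-1})\,dk$ lies in $\W(\pi,\psi)$, and averaging over $\K_n(n)$ makes it right $\K_n(n)$-invariant. I would check $F\neq0$ by evaluating at $g=1$: the block decomposition of $\K_n(n)$ from the proof of Theorem~\ref{depthzerocoset}~\eqref{depthzeromain3} shows that for $k\in\K_n(n)$ the matrix $\Sigma_n k\Sigma_n^{-1}$ reduces modulo $\mathfrak p_\F$ to an element of the mirabolic $\P_n(k_\F)$, equals $1_n$ modulo $\mathfrak p_\F$ for $k$ in a nonempty open subgroup, and lies in $\Z_n\U_n\K_n=\mathrm{supp}(\W_{\pi,\mathrm{Gel},\psi^{\Sigma_n^{-1}}})$ only then; so by the formula for $\W_{\pi,\mathrm{Gel},\psi^{\Sigma_n^{-1}}}$ and property~(B\ref{besselfunction1}) only $k$ with $\overline{\Sigma_n k\Sigma_n^{-1}}=1_n$ contribute, each giving $B_{\tau,\overline\psi}(1)=1$, so $F(1)\neq0$. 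Hence $F$ is a nonzero newform in $\W(\pi,\psi)$. For the support: if $F(g)\neq0$ then $\Sigma_n gk\Sigma_n^{-1}\in\Z_n\U_n\K_n$ for some $k\in\K_n(n)$, and since $\Sigma_n^{-1}\U_n\Sigma_n=\U_n$ this forces $g\in\Z_n\U_n\K_n^{\Sigma_n}k^{-1}\subseteq\Z_n\U_n\K_n^{\Sigma_n}\K_n(n)$, the claimed support of $\W_{\pi,\new,\psi}$.

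\textbf{Identifying the two integrals and normalising.} To see $\Lambda\neq0$ and obtain both displayed equalities, I would substitute the description $c_{f_\new,f_\new^\vee}(g)=\int_{\K_n(n)}\int_{\K_n(n)}c_{\mathcal{B}_{\widetilde{\tau}},\mathcal{B}_{\widetilde{\tau}^{\vee}}}(k'\Sigma_n gk\Sigma_n^{-1})\,dk'\,dk$ of Proposition~\ref{matrixcoeffsdepthzero}(3) into $\Lambda$, interchange the integrals, and change variables in the $\U_n$-integral by conjugation by $\Sigma_n$ --- which brings the unipotent integration next to $\Sigma_n gk\Sigma_n^{-1}$ and replaces $\psi$ by $\psi^{\Sigma_n^{-1}}$, compatibly with the isomorphism $\W(\pi,\psi^{\Sigma_n^{-1}})\xrightarrow{f\mapsto f(\Sigma_n\cdot)}\W(\pi,\psi)$ recalled before the statement. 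Using that $c_{\mathcal{B}_{\widetilde{\tau}},\mathcal{B}_{\widetilde{\tau}^{\vee}}}$ is supported in $\Z_n\K_n$ with $c_{\mathcal{B}_{\widetilde{\tau}},\mathcal{B}_{\widetilde{\tau}^{\vee}}}(zk)=\omega_\pi(z)B_{\tau,\overline\psi}(\overline k)$, and that $\overline{\K_n(n)}$ is the mirabolic $\P_n(k_\F)$ on which $B_{\tau,\overline\psi}$ vanishes off $\U_n(k_\F)$ by~(B\ref{besselfunction1}), the inner $\U_n$-integral collapses to the $\B_{n-1}^{\op}(k_\F)$-averaged Bessel sum defining Gelfand's function, identifying it with a fixed nonzero multiple of $\W_{\pi,\mathrm{Gel},\psi^{\Sigma_n^{-1}}}(\Sigma_n gk\Sigma_n^{-1})$. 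Therefore $\Lambda$ is a nonzero scalar times $F$, both are scalar multiples of $\W_{\pi,\new,\psi}$, and matching values at $g=1$ (via Proposition~\ref{matrixcoeffsdepthzero} for $\Lambda(1)$, and via the computation above for $F(1)$) fixes the two normalising constants and yields the formulas for $\W_{\pi,\new,\psi}$ normalised at the identity.

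\textbf{Main obstacle.} Convergence, the equivariances, and the support bound are routine. The substance is the last step: tracking the nested $\K_n(n)$-averages together with the conjugation by $\Sigma_n$ and its conductor shift, and verifying that the left $\K_n(n)$-translate of $c_{\mathcal{B}_{\widetilde{\tau}},\mathcal{B}_{\widetilde{\tau}^{\vee}}}$ really does reduce the unipotent integral to the finite Bessel average in Gelfand's formula (this is where $\overline{\K_n(n)}=\P_n(k_\F)$ and~(B\ref{besselfunction1}) do the work). An equivalent route, avoiding the comparison, is a direct evaluation of $\int_{\U_n}\psi^{-1}(u)c_{f_\new,f_\new^\vee}(u\Sigma_n)\,du$ by Proposition~\ref{matrixcoeffsdepthzero} and~(B\ref{besselfunction1}); it is the same computation.
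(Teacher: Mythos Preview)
Your outline differs from the paper's in a substantive way, and the difference is exactly where your argument has a gap.

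\textbf{What the paper does.} The paper proves the \emph{first} formula first and derives the second from it. It sets up the intertwiner $\Xi:v\mapsto\bigl(g\mapsto\int_{\U_n}\psi^{-1}(u)c_{v,f_\new^\vee}(ug)\,du\bigr)$ and shows $\Xi\neq0$ by evaluating not $\Xi(f_\new)(1)$ but $\Xi(f_{\Sigma})(1)$, where $f_{\Sigma}=f_{\Sigma_n}$ is the single-coset piece of $f_\new$. Because $c_{f_\Sigma,f_\new^\vee}$ has support in $\K_n(n)\Z_n\K_n^{\Sigma_n}$ and one checks $\U_n\cap\K_n(n)\Z_n\K_n^{\Sigma_n}=\U_n\cap\K_n^{\Sigma_n}$, the unipotent integral collapses to the finite sum $\sum_{u\in\U_n(k_\F)}\overline\psi^{-1}(u)\sum_{b,b'\in\B_{n-1}^{\op}(k_\F)}B_{\tau,\overline\psi}(bub')$. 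A short induction lemma (for any $\overline\psi$-equivariant function $\alpha$ on $\GL_r(k_\F)$, the double sum $|\U_r|^{-1}\sum_{u,b}\overline\psi^{-1}(u)\alpha(bug)$ equals $\alpha(g)$) then evaluates this as $|\U_n(k_\F)|\neq0$. Once $\Xi\neq0$ one has $\Lambda=\Xi(f_\new)\neq0$, and the second formula falls out of Theorem~\ref{depthzerocoset}\,(3): writing $f_\new=\int_{\K_n(n)}\pi(k\Sigma_n^{-1})\mathcal{B}_{\widetilde\tau}\,dk$ and using that $\Xi(\mathcal{B}_{\widetilde\tau})$ is a nonzero multiple of $g\mapsto W_{\mathrm{Gel}}(\Sigma_n g)$ (multiplicity one) gives $\Lambda=\text{const}\cdot F$.

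\textbf{Where your route stalls.} You go in the opposite direction: first $F\neq0$, then ``$\Lambda=\text{(nonzero)}\cdot F$'' via Proposition~\ref{matrixcoeffsdepthzero}(3). The issue is the word ``nonzero''. After you substitute $c_{f_\new,f_\new^\vee}=\iint c_{\mathcal{B},\mathcal{B}^\vee}(k'\Sigma_n(\cdot)k\Sigma_n^{-1})\,dk'\,dk$ and change variables, the inner $\U_n$-integral against $c_{\mathcal{B},\mathcal{B}^\vee}(k'\,\cdot\,)$ is \emph{not} directly Gelfand's function: the left translate by $k'\in\K_n(n)$ replaces $\mathcal{B}^\vee$ by $\pi^\vee(k'^{-1})\mathcal{B}^\vee$, so the resulting Whittaker function for $\mathcal{B}$ is a $k'$-dependent scalar times $W_{\mathrm{Gel}}$, and after integrating over $k'$ that scalar could a priori vanish. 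Your appeal to (B\ref{besselfunction1}) and $\overline{\K_n(n)}=\P_n(k_\F)$ does not rule this out; it is precisely the kind of cancellation the paper's finite-group lemma is designed to exclude. Equivalently, the map $v^\vee\mapsto\Xi_{v^\vee}$ is (up to scalar) the $\psi^{-1}$-Whittaker functional on $\pi^\vee$, so $\Xi_{f_\new^\vee}\neq0$ amounts to $W_{\pi^\vee,\new}(1)\neq0$; you have not supplied that over general $\R$. (Your sketch of $F(1)\neq0$ is also a bit loose: you assert $\Sigma_n k\Sigma_n^{-1}\in\Z_n\U_n\K_n$ only when $\overline{\Sigma_n k\Sigma_n^{-1}}=1$, but the correct statement, as in the proof of Theorem~\ref{depthzerocoset}\,(3), is that when $\Sigma_n k\Sigma_n^{-1}\in\K_n$ its reduction lies in $\B_{n-1}^{\op}(k_\F)$, and (B\ref{besselfunction1}) then kills all but the identity; you should also check no extra contributions come from $\U_n\K_n\setminus\K_n$.)

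In short: the key idea you are missing is to test the Jacquet map at $f_\Sigma$ rather than $f_\new$, and the key lemma is the inductive Bessel identity that evaluates the resulting finite double sum. With those in hand the nonvanishing of $\Lambda$ is immediate, and the $F$-formula then follows formally from Theorem~\ref{depthzerocoset}\,(3) rather than from a separate computation.
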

\begin{proof}
The support condition will follow once we have established either integral expression -- directly from the support of our matrix coefficient or of Gelfand's Whittaker function. For the first expression note that the map
\begin{align*}
	\Xi\colon\pi&\to\Ind_{\U_n}^{\G_n}(\psi_n)\\
	v&\mapsto\left(g\mapsto \int_{\U_n}\psi^{-1}(u)c_{v,f_\new^\vee}(ug)du\right),
	\end{align*}
is clearly an intertwining map. Since $\pi$ is irreducible and $\Xi(f_\new)$ is $K(n)$-invariant the formula follows if we can show that $\Xi$ is nonzero. In particular it is enough to show that
$$\int_{\U_n}\psi^{-1}(u)c_{f_\Sigma,f_\new^\vee}(u)du\not=0.$$ 
We write~$c$ for the matrix coefficient~$c_{f_\Sigma,f_\new^\vee}$. By Equation (\ref{eq:suppcoeff}) we see that $\operatorname{supp}(c)\subseteq \K_n(n)\Z_n(\K_n^{\Sigma_n}) $ and hence the integral is actually over $\U_n\cap \K_n(n)\Z_n(\K_n^{\Sigma_n})$. We have that 
\[\U_n\cap \K_n(n)\Z_n(\K_n^{\Sigma_n})=\U_n\cap \K_n^{\Sigma_n}\]
which in particular implies that
$$\int_{\U_n}\psi^{-1}(u)c(u)du=\int_{\U_n}\psi^{-1}(u)c_{f_\Sigma,f_\Sigma^\vee}(u)du.$$
Note that by our choices $\psi^{-1}(u'u)c(u'u)=\psi^{-1}(u)c(u)$ for all $u\in \U\cap \K_n^{\Sigma_n},u'\in \U\cap (\K_n^1)^{\Sigma}$. Hence we need to compute
\[\sum_{u\in \U\cap (\K_n^1)^{\Sigma}\backslash \U\cap \K_n^{\Sigma_n}}\psi^{-1}(u)c(u).\]
However, note that $\U\cap (\K_n^1)^{\Sigma}\backslash \U\cap \K_n^{\Sigma_n}\cong \U_n(k_\F)$, and we find
\[\sum_{u\in \U\cap (\K_n^1)^{\Sigma}\backslash \U\cap \K_n^{\Sigma_n}}\psi^{-1}(u)c(u)=\sum_{u\in \U_n(k_\F)}\overline{\psi}^{-1}(u)\sum_{b,b'\in \B_{n-1}^{\op}(k_\F)}B_{\tau,\overline{\psi}}(bub').\]
To compute this, we will use the following lemma:
\begin{lemma}
	Let $\alpha\colon\GL_r(k_\F)\to\mathbb C$ be a function such that $\alpha(ug)=\overline{\psi}(u)\alpha(g)$ for all $u\in \U_r(k_\F),g\in\GL_r(k_\F)$. Then for all $g\in\GL_r(k_\F)$ we have that 
	$$\frac{1}{|\U_r(k_\F)|}\sum_{u\in \U_r(k_\F),b\in \B_{r-1}^{\op}(k_\F)}\overline{\psi}^{-1}(u)\alpha(bug)=\alpha(g).$$
\end{lemma}
\begin{proof} 
	We proceed via induction on $r$. For $x\in k_\F^{r-1}$ let $n(x)$ be the matrix in $\U_r(k_\F)$ defined by	
	\[n(x)=\begin{pmatrix}
		1_{r-1}&x\\0&1
	\end{pmatrix}.\]	
	Let $\N_r(k_\F)$ be the abelian subgroup of $\U_r(k_\F)$ consisting of matrices of the form $n(x)$ where $x\in k_\F^{r-1}$. Note that $\U_r(k_\F)=\U_{r-1}(k_\F)\ltimes \N_r(k_\F)$, and hence we have
	\[\sum_{u\in \U_r(k_\F),b\in \B_{r-1}^{\op}(k_\F)}\alpha(bug)=\sum_{u\in \U_{r-1}(k_\F)}\overline{\psi}^{-1}(u)\sum_{b\in \B_{r-1}^{\op}(k_\F),x\in \N_r(k_\F)}\overline{\psi}^{-1}(x)\alpha(bxug).\]
	Now $bxb^{-1}\in \N_{r}(k_\F)$ and hence  
	\[\sum_{x\in \N_r(k_\F)}\overline{\psi}^{-1}(x)\alpha(bxug)=\sum_{x\in \N_r(k_\F)}\overline{\psi}(x^{-1}bxb^{-1})\alpha(bug).\]
	Let $x=n(x')$ for some $x'\in k_\F^{r-1}$. Then $x^{-1}bxb^{-1}=n((b-I_{r-1})x')$.  Now if the bottom row of $b$ is not equal to $(0,\dotsc,0,1)$ then~$\sum_{x\in \N_r(k_\F)}\overline{\psi}(x^{-1}bxb^{-1})=0$.  If $b$ has bottom row equal to $(0,\dotsc,0,1)$, i.e.\ $b\in \B_{r-2}^\op(k_\F)$, then $\overline{\psi}(x^{-1}bxb^{-1})=1$. Hence
	\[\sum_{b\in \B_{r-1}^{\op}(k_\F),x\in \N_r(k_\F)}\overline{\psi}^{-1}(x)\alpha(bxug)=|\N_r(k_\F)|\sum_{b\in \B_{r-2}^{\op}(k_\F)}\alpha(bug).\]
	Since $|\U_r(k_\F)|=|\U_{r-1}(k_\F)|\cdot|\N_r(k_\F)|$, 
	this implies that 
	$$\frac{1}{|U_r(k_\F)|}\sum_{u\in \U_r(k_\F), b\in \B_{r-1}^{\op}(k_\F)}\overline{\psi}^{-1}(u)\alpha(bug)=\frac{1}{|\U_{r-1}(k_\F)|}\sum_{u\in \U_{r-1}(k_\F), b\in \B_{r-2}^{\op}(k_\F)}\overline{\psi}^{-1}(u)\alpha(bug)$$
	and we obtain the result via induction.
\end{proof}
Hence, we see that
\[\sum_{u\in \U(k_\F)}\overline{\psi}^{-1}(u)\sum_{b,b'\in \B_{n-1}^{\op}(k_\F)}B_{\tau,\overline{\psi}}(bub')=|\U_{n}(k_\F)|\sum_{b'\in \B_{n-1}^{\op}(k_\F)}B_{\tau,\overline{\psi}}(b')=|\U_n(k_\F)|,\]
which establishes our first integral expression. Our second integral expression follows from Theorem \ref{depthzerocoset} \eqref{depthzeromain3}.
\end{proof}

\section{Newform vectors in minimax cuspidal representations}
\subsection{Parahoric subgroups and their filtrations}
Let~$\V$ be an~$\F$-vector space of dimension~$n$,~$\G=\GL_{\F}(\V)$, and~$\A=\End_{\F}(\V)$.  Fix a character~$\psi:\F\rightarrow \R^\times$ of conductor~$\mathfrak{p}_\F$ (i.e., trivial on~$\mathfrak{p}_{\F}$, but not on~$\mathfrak{o}_{\F}$).   

An~$\mathfrak{o}_\F$-lattice in~$\V$ is a compact open~$\mathfrak{o}_\F$-submodule of~$\V$.  Given any~$\mathfrak{o}_\F$-lattice~$L$ there exists a basis~$\{e_1,\ldots,e_n\}$ of~$\V$ such that~$L=\mathfrak{o}_\F e_1 \oplus \cdots \oplus \mathfrak{o}_\F e_n$.   Let~$\mathrm{Latt}(\V)$ denote the set of~$\mathfrak{o}_\F$-lattices in~$\V$.  A function~$\Lambda:\mathbb{Z}\rightarrow \mathrm{Latt}(\V)$ is called a \emph{lattice chain} if
\begin{enumerate}
\item it is \emph{strictly decreasing}: $\Lambda(i)\subset \Lambda(i-1)$ for all~$i\in\mathbb{Z}$;
\item \emph{periodic}: there exists~$e(\Lambda)\in\mathbb{N}$ such that~$\Lambda(i+e(\Lambda))=\mathfrak{p}_\F\Lambda(i)$ for all $i\in\mathbb{Z}$.
\end{enumerate}
 Given a lattice chain in~$\V$, there exists a basis~$\{v_1,\ldots v_n\}$ of~$\V$ such that for~$0\leqslant i\leqslant e(\Lambda)-1$ we have
\[\Lambda(i)=\mathfrak{o}_\F v_1 \oplus \cdots \oplus \mathfrak{o}_\F v_{r_i}\oplus \mathfrak{p}_{\F} v_{r_i+1}\oplus \cdots \oplus \mathfrak{p}_\F v_n;\]
we call such a basis \emph{standard}.  We say that a lattice chain~$\Lambda$ is principal if~$\dim_{k_\F}(\Lambda(i)/\Lambda(i+1))$ is independent of~$i$.  

Given a lattice chain~$\Lambda$, the submodule of~$\A$ defined by
\[\mathfrak{A}_0(\Lambda)=\bigcap_{i=0}^{e(\Lambda)-1}\End_{\mathfrak{o}_\F}(\Lambda(i)),\]
is a hereditary~$\mathfrak{o}_\F$-order in~$\A$, and all hereditary~$\mathfrak{o}_\F$-orders in~$\A$ arise in this way.  With respect to a standard basis for~$\Lambda$,~$\mathfrak{A}_0(\Lambda)$ is contained in~$\mathrm{M}_n(\mathfrak{o}_\F)$ and block-upper-triangular modulo~$\mathfrak{p}_\F$.  If~$\Lambda$ is principal then the blocks are of the same size equal to~$n/e(\Lambda)$.

The hereditary order~$\mathfrak{A}_0(\Lambda)$ has a decreasing filtration
\[\mathfrak{A}_r(\Lambda)=\{x\in\A: x\Lambda(i)\subseteq \Lambda(i+k)\text{ for all }i\in\mathbb{Z}\},\]
by~$\mathfrak{o}_\F$-submodules, and~$\mathfrak{P}=\mathfrak{P}(\Lambda)=\mathfrak{A}_1(\Lambda)$ is the Jacobson radical of~$\mathfrak{A}_0(\Lambda)$.  We set
\[\U^0(\Lambda)=\mathfrak{A}_0(\Lambda)^\times,\quad \U^r(\Lambda)=1+\mathfrak{A}_r(\Lambda),\]
then~$\U^0(\Lambda)$ is a \emph{parahoric subgroup} of~$\G$, with associated filtration~$\U^r(\Lambda)$.  The commutator subgroup~$[\U^r(\Lambda),\U^s(\Lambda)]$ is contained in $\U^{r+s}(\Lambda)$ and we have the following crucial isomorphism:
\begin{lemma}
For integers~$0\leqslant r\leqslant s\leqslant 2r+1$, we have an isomorphism
\begin{align*}
\mathfrak{P}^{-s}/\mathfrak{P}^{-r}&\rightarrow \Hom(\U^{r+1}(\Lambda)/\U^{s+1}(\Lambda),\R^\times)\\
b+\mathfrak{P}^{-r}&\mapsto [\psi_{b}:x\mapsto \psi(\mathrm{Tr}_{\A/\F}(b (x-1)))].
\end{align*}
\end{lemma}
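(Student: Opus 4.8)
The plan is to linearize the group side, reduce the claim to a duality between additive $\mathfrak{o}_\F$-lattices in $\A$, and then appeal to the self-duality of the order filtration under the trace form twisted by $\psi$. Since $s\leqslant 2r+1$, i.e.\ $s+1\leqslant 2(r+1)$, the commutator bound recorded above gives $[\U^{r+1}(\Lambda),\U^{r+1}(\Lambda)]\subseteq\U^{2(r+1)}(\Lambda)\subseteq\U^{s+1}(\Lambda)$, and since $\U^{s+1}(\Lambda)=1+\mathfrak{A}_{s+1}(\Lambda)$ is normal in $\U^{0}(\Lambda)$ (it is $1$ plus a two-sided ideal of $\mathfrak{A}_0(\Lambda)$) the quotient $\U^{r+1}(\Lambda)/\U^{s+1}(\Lambda)$ is a well-defined abelian group. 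Moreover, directly from the definition of the $\mathfrak{A}_k(\Lambda)$ one has $\mathfrak{A}_{r+1}(\Lambda)\mathfrak{A}_{r+1}(\Lambda)\subseteq\mathfrak{A}_{2(r+1)}(\Lambda)\subseteq\mathfrak{A}_{s+1}(\Lambda)$, so $x\mapsto x-1$ descends to a group isomorphism $\U^{r+1}(\Lambda)/\U^{s+1}(\Lambda)\xrightarrow{\ \sim\ }\bigl(\mathfrak{A}_{r+1}(\Lambda)/\mathfrak{A}_{s+1}(\Lambda),+\bigr)$, carrying $\psi_b$ to $a\mapsto\psi(\mathrm{Tr}_{\A/\F}(ba))$. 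Writing $\mathfrak{P}^{-k}=\mathfrak{A}_{-k}(\Lambda)$, it therefore suffices to prove that the trace pairing $(b,a)\mapsto\psi(\mathrm{Tr}_{\A/\F}(ba))$ induces an isomorphism
\[\mathfrak{A}_{-s}(\Lambda)/\mathfrak{A}_{-r}(\Lambda)\ \xrightarrow{\ \sim\ }\ \Hom\bigl(\mathfrak{A}_{r+1}(\Lambda)/\mathfrak{A}_{s+1}(\Lambda),\R^\times\bigr).\]

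First I would check that this pairing descends to the indicated quotients. Using $\mathfrak{A}_j(\Lambda)\mathfrak{A}_k(\Lambda)\subseteq\mathfrak{A}_{j+k}(\Lambda)$ and the elementary fact $\mathrm{Tr}_{\A/\F}(\mathfrak{P})\subseteq\mathfrak{p}_\F$ (in a standard basis the diagonal blocks of the Jacobson radical $\mathfrak{P}=\mathfrak{A}_1(\Lambda)$ lie in $\mathfrak{p}_\F$, and the trace only involves diagonal entries), one gets $\mathrm{Tr}_{\A/\F}\bigl(\mathfrak{A}_{-r}(\Lambda)\mathfrak{A}_{r+1}(\Lambda)\bigr)\subseteq\mathfrak{p}_\F$ and $\mathrm{Tr}_{\A/\F}\bigl(\mathfrak{A}_{-s}(\Lambda)\mathfrak{A}_{s+1}(\Lambda)\bigr)\subseteq\mathfrak{p}_\F$; as $\psi$ has conductor $\mathfrak{p}_\F$, the pairing is well defined on $\mathfrak{A}_{-s}(\Lambda)/\mathfrak{A}_{-r}(\Lambda)\times\mathfrak{A}_{r+1}(\Lambda)/\mathfrak{A}_{s+1}(\Lambda)$, and it is additive in $b$.

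The key input is the self-duality $\mathfrak{A}_k(\Lambda)^{\perp}=\mathfrak{A}_{1-k}(\Lambda)$ for every $k\in\mathbb{Z}$, where $L^{\perp}=\{x\in\A:\psi(\mathrm{Tr}_{\A/\F}(xL))=1\}$ and the trace form on $\A$ is non-degenerate over $\F$. I would establish this by fixing a standard basis of $\V$ adapted to $\Lambda$, describing $\mathfrak{A}_k(\Lambda)$ as the matrices whose $(i,j)$-entry lies in $\mathfrak{p}_\F^{a_{ij}(k)}$ for explicit block step functions $a_{ij}$, and verifying the identity $a_{ji}(1-k)=1-a_{ij}(k)$ together with $\mathrm{Tr}_{\A/\F}(xy)=\sum_{i,j}x_{ij}y_{ji}$; this is the standard hereditary-order computation of \cite{BK93}. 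Granting it, the descended pairing above is non-degenerate on both sides: its left kernel is $\bigl(\mathfrak{A}_{-s}(\Lambda)\cap\mathfrak{A}_{r+1}(\Lambda)^{\perp}\bigr)/\mathfrak{A}_{-r}(\Lambda)=\bigl(\mathfrak{A}_{-s}(\Lambda)\cap\mathfrak{A}_{-r}(\Lambda)\bigr)/\mathfrak{A}_{-r}(\Lambda)=0$, and, using $\mathfrak{A}_{-s}(\Lambda)^{\perp}=\mathfrak{A}_{s+1}(\Lambda)$, its right kernel is $\bigl(\mathfrak{A}_{r+1}(\Lambda)\cap\mathfrak{A}_{s+1}(\Lambda)\bigr)/\mathfrak{A}_{s+1}(\Lambda)=0$. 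A bilinear pairing of finite abelian $p$-groups into $\R^\times$ that is non-degenerate on each side is perfect, since $\R$ is algebraically closed of characteristic $\ell\neq p$ and hence contains enough roots of unity; therefore the adjoint map is an isomorphism, and this adjoint map is precisely $b+\mathfrak{P}^{-r}\mapsto\psi_b$, which proves the lemma.

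The main obstacle is the trace-form self-duality $\mathfrak{A}_k(\Lambda)^{\perp}=\mathfrak{A}_{1-k}(\Lambda)$: it is the only ingredient that uses the fine structure of hereditary orders rather than formal identities for ideals, commutators, and characters, and carrying it out means bookkeeping the entrywise valuations of the $\mathfrak{A}_k(\Lambda)$ in a standard basis (or else citing \cite{BK93}). A minor secondary point is to keep track that the Pontryagin-duality steps are done with $\R^\times$-valued characters of finite abelian $p$-groups, which is harmless exactly because $\mathrm{char}(\R)=\ell$ is prime to $p$.
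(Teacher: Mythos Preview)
Your argument is correct and is exactly the standard proof from \cite{BK93}; the paper itself states this lemma without proof, treating it as a known input from the Bushnell--Kutzko theory. There is nothing to compare: you have filled in the details that the paper omits, and done so accurately.
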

\subsection{Strata in cuspidal representations}\label{ss:strata}

A \emph{stratum} in~$\A$ is a $4$-tuple~$[\Lambda,r,s,\beta]$ where
\begin{enumerate}
\item $\Lambda$ is an~$\mathfrak{o}_\F$-lattice chain;
\item $s<r$ are integers;
\item $\beta\in\mathfrak{P}^{-r}(\Lambda)$.
\end{enumerate}
A \emph{stratum}~$[\Lambda,m,m-1,\beta]$ in~$\A$ is called \emph{fundamental} if~$\beta+\mathfrak{P}^{-m+1}(\Lambda)$ contains no nilpotent elements.

Let~$\pi$ be an irreducible representation of~$\G$ which is not of depth zero.  Then there exists a fundamental stratum~$[\Lambda,m,m-1,\beta]$ with~$r\geqslant 1$ such that
\begin{equation}\tag{\dag}
\label{contain}
\Hom_{\U^m(\Lambda)}(\psi_\beta,\pi)\neq 0.\end{equation}
Following Bushnell and Kutzko, we say that~$\pi$ \emph{contains} the stratum~$[\Lambda,m,m-1,\beta]$.  The rational number~$m/e(\Lambda)$ is an invariant of~$\pi$ (i.e., does not depend on the choice of fundamental stratum), which we call the \emph{depth} of~$\pi$, and we write~$d(\pi)=m/e(\Lambda)$.

For a cuspidal representation of~$\G$ (of positive depth), Bushnell and Kutzko show that we can choose a particularly nice fundamental stratum -- where, for example, amongst its additional properties~$\beta$ generates a field in~$\A$ -- satisfying \eqref{contain}.  

A \emph{minimal} stratum in~$\A$ is a fundamental stratum~$[\Lambda,m,m-1,\beta]$ satisfying:
\begin{enumerate}
\item the~$\F$-algebra~$\E=\F[\beta]$ is a field in~$\A$; 
\item $\Lambda$ is an~$\mathfrak{o}_\E$-lattice chain;
\item the element~$\beta$ is \emph{minimal} over~$\F$, meaning:
\begin{enumerate}
\item $\mathrm{gcd}(m,e(\E/\F))=1$;
\item $\varpi^{m}\beta^{e(\E/\F)}+\mathfrak{p}_\E$ generates the extension of residue fields~$k_\E/k_\F$.
\end{enumerate}
\end{enumerate}
Note that,~$e(\Lambda)=e(\E/\F)$.

A minimax stratum~$[\Lambda,m,m-1,\beta]$ in~$\A$ is a minimal stratum which is also \emph{maximal} in that~$\deg(\E/\F)=\deg(\F[\beta]/\F)=n$.  In this case, up to translation, there is a unique choice for~$\Lambda$ given by~$\{\mathfrak{p}_{\E}^i:i\in\mathbb{Z}\}$.   

\begin{definition}
A (positive depth) cuspidal~$\R$-representation of~$\G$ is called \emph{minimax} if it \emph{contains} a minimax stratum~$[\Lambda,m,m-1,\beta]$ such that~$\Hom_{\U^{\lfloor m/2\rfloor +1}(\Lambda)}(\psi_{\beta},\pi)\neq 0$.  (In other words, it also contains the simple stratum~$[\Lambda,m,0,\beta]$ following Bushnell and Kutzko's terminology.)
\end{definition}
\begin{remark}
Every minimax cuspidal~$\R$-representation is supercuspidal -- cf., \cite[III 5.14]{Vig96}.
\end{remark}

\subsection{Minimax characters}
Let~$[\Lambda,m,0,\beta]$ be a simple stratum in~$\A$ such that~$[\Lambda,m,m-1,\beta]$ is a minimax stratum in~$\A$. We identify~$\V$ with~$\E=\F[\beta]$ as an~$\F$-vector space via the choice of (ordered)~$\F$-basis
\[\mathcal{B}=\{1,\beta,\beta^2,\ldots,\beta^{n-1}\}.\]
We write~$e=e(\E/\F)$.  With respect to the basis~$\mathcal{B}$, which we call the companion basis to~$[\Lambda,m,0,\beta]$,~$\beta$ is in companion matrix form:
\[\beta=\begin{pmatrix}  &&&-a_0\\
1&&&-a_1\\
&\ddots&&\vdots\\
&&1&-a_{n-1}\end{pmatrix}\]
where~$f_{\beta}(X)=X^{n}+a_{n-1}X +\cdots +a_0$ is the minimal polynomial of~$\beta$.  We have
\[\nu_{\F}(a_0)=-mn/e,\quad \nu_\F(a_i)\geqslant -m(n-i)/e.\]

This basis has the nice property that the restriction of~$\psi_{\beta}$ to upper triangular unipotent matrices~$\U_n$ (intersected with~$\U^{\lfloor m/2\rfloor +1}(\Lambda)$), agrees with the standard non-degenerate character~${\Psi}(x)=\psi(x_{1,2}+\cdots+x_{n-1,n})$.  
However, with respect to this basis, the principal order~$\mathfrak{A}$ is not, in general, standard.

Set
\begin{align*}
\H^1&=(1+\mathfrak{p}_{\E})\U^{\lfloor m/2\rfloor +1}(\Lambda),\quad 
\J^1=(1+\mathfrak{p}_{\E})\U^{\lfloor (m+1)/2\rfloor}(\Lambda)\\
\J&=\mathfrak{o}_{\E}^\times \U^{\lfloor (m+1)/2\rfloor}(\Lambda),\quad\mathbf{J}=\E^\times \U^{\lfloor (m+1)/2\rfloor}(\Lambda).
\end{align*}
(The centres of these groups are equal to their intersections with~$\F^\times$.)

The set of \emph{simple characters} of~$\H^1$ associated to~$[\mathfrak{A},m,0,\beta]$ is
\[\mathcal{C}(\Lambda,\beta)=\{\theta\in \Hom(\H^1,\R^\times):\theta\mid_{\U^{\lfloor m/2\rfloor +1}(\Lambda)}=\psi_{\beta}\}.\]
As~$\E^\times$ normalizes~$\Lambda$,~$\U^{\lfloor m/2\rfloor +1}(\Lambda)$ is a normal subgroup of~$\H^1$.  

Given~$\theta\in\mathcal{C}(\Lambda,\beta)$, we set
\[\eta_{\theta}=\ind_{(\U_n\cap \J^1)\H^1}^{\J^1}(\theta_{\psi})\]
where~$\theta_{\psi}(uh)=\Psi(u)\theta(h)$ (note that, $\U_n\cap \H^1=\U_n\cap \U^{\lfloor m/2\rfloor +1}(\Lambda)$ and~$\Psi$ and~$\theta$ clearly agree on this intersection).  The representation~$\eta_\theta$ is the unique irreducible representation of~$\J^1$ containing~$\theta$ (from the theory of Heisenberg representations).  Notice if~$m$ is odd, we have~$\H^1=\J^1$ and~$\eta_{\theta}=\theta$.    The representation~$\eta_{\theta}$ extends to~$\mathbf{J}$, choose any extension~${\boldsymbol\lambda}$ of~$\eta_{\theta}$, then
\[\pi=\ind_{\mathbf{J}}^{\G}({\boldsymbol\lambda})\]
is irreducible, minimax, and supercuspidal \emph{(and all minimax supercuspidals arise in this way).}   
The ramification index and inertial degree of~$\E/\F$ are invariants of~$\pi$ and we write~$e(\pi)=e(\E/\F)$ and~$f(\pi)=f(\E/\F)$.

\subsection{Newform vectors in minimax cuspidals}
Recall that~$\Sigma_n$ denotes the diagonal matrix~$\mathrm{diag}(\varpi_\F^{n-1},\varpi_\F^{n-2},\ldots,1)$.  We start with a basic lemma:
\begin{lemma}\label{Lemmapsibeta}
Let~$[\Lambda,m,0,\beta]$ be a simple stratum in~$\A$ such that~$[\Lambda,m,m-1,\beta]$ is a minimax stratum in~$\A$, and write~$\U^{\lfloor m/2\rfloor +1}(\Lambda)$ with respect to the companion basis of~$[\Lambda,m,0,\beta]$.  Then
\[\psi_{\beta}\mid_{\U^{\lfloor m/2\rfloor +1}(\Lambda)\cap \presuper{\Sigma_n} \K_n(n(1+m/e))}=1.\]
\end{lemma}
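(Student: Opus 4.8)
The plan is a direct computation with the explicit companion form of $\beta$. Since $\psi$ has conductor $\mathfrak{p}_\F$, and since for $x\in\U^{\lfloor m/2\rfloor +1}(\Lambda)$ the character $\psi_{\beta}$ is given by $\psi_{\beta}(x)=\psi(\mathrm{Tr}_{\A/\F}(\beta(x-1)))$, it suffices to show that $\mathrm{Tr}_{\A/\F}(\beta(x-1))\in\mathfrak{p}_\F$ for every $x\in\presuper{\Sigma_n}\K_n(N)$, where $N:=n(1+m/e)$; I would note that $e=e(\E/\F)$ divides $[\E:\F]=n$, so $N\in\mathbb{Z}$ (and in fact $N=c(\pi)$ by Bushnell's formula). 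Observe that this is a slightly stronger statement than needed, since the estimates below use only membership in $\presuper{\Sigma_n}\K_n(N)$, not the intersection with $\U^{\lfloor m/2\rfloor+1}(\Lambda)$.

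First I would record the effect of conjugation by $\Sigma_n=\mathrm{diag}(\varpi_\F^{n-1},\ldots,\varpi_\F,1)$ on matrix entries: writing $x=\Sigma_n g\Sigma_n^{-1}$ with $g=(g_{ij})\in\K_n(N)$, we have $x_{ij}=\varpi_\F^{j-i}g_{ij}$. As each $g_{ij}\in\mathfrak{o}_\F$, this already gives $(x-1)_{i,i+1}=\varpi_\F g_{i,i+1}\in\mathfrak{p}_\F$ for $1\leqslant i\leqslant n-1$, while the conditions defining the last row of $\K_n(N)$ give $(x-1)_{n,i}=\varpi_\F^{i-n}g_{n,i}\in\mathfrak{p}_\F^{N+i-n}$ for $1\leqslant i\leqslant n-1$ and $(x-1)_{n,n}=g_{n,n}-1\in\mathfrak{p}_\F^{N}$. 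Reading off the nonzero entries of the companion matrix $\beta$ (the $1$'s on the subdiagonal and the entries $-a_{i-1}$ in the last column), the matrix trace expands as
\[\mathrm{Tr}_{\A/\F}(\beta(x-1))=\sum_{i=1}^{n-1}(x-1)_{i,i+1}-\sum_{i=1}^{n}a_{i-1}(x-1)_{n,i}.\]
The first sum lies in $\mathfrak{p}_\F$ by the above, with no hypothesis on the $a_j$. For the second sum I would estimate valuations term by term: using $\nu_\F(a_{i-1})\geqslant -m(n-i+1)/e$ (which holds for all $i$, being the equality $\nu_\F(a_0)=-mn/e$ when $i=1$) together with $\nu_\F((x-1)_{n,i})\geqslant N+i-n$, one gets $\nu_\F(a_{i-1}(x-1)_{n,i})\geqslant m(i-1)/e+i\geqslant 1$ for every $1\leqslant i\leqslant n$. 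Hence $\mathrm{Tr}_{\A/\F}(\beta(x-1))\in\mathfrak{p}_\F$ and $\psi_{\beta}(x)=1$, as desired.

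I do not expect a genuine obstacle here: once the conjugation rule $x_{ij}=\varpi_\F^{j-i}g_{ij}$ is in hand the argument is bookkeeping, and the only real pitfalls are the direction of the conjugation and keeping the index shifts straight (the hypotheses on the valuations of the $a_j$ enter only in the final step). Two structural points worth flagging in the write-up: the subdiagonal $1$'s of the companion matrix pair exactly with the superdiagonal entries of $x-1$, which is why the first sum is automatically integral; and the $i=1$ term is the tight one, using $\nu_\F(a_0)=-mn/e$ precisely and forcing exactly the exponent $N=n(1+m/e)=c(\pi)$, which is why it is this particular conductor subgroup that appears and why the statement is sharp.
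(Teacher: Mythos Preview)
Your proof is correct and follows essentially the same approach as the paper's: both compute the explicit trace formula $\psi_\beta(x)=\psi\bigl(\sum_{i=1}^{n-1}x_{i,i+1}-\sum_{i=1}^{n}a_{i-1}(x-1)_{n,i}\bigr)$ from the companion form of $\beta$, read off the entry valuations of $\presuper{\Sigma_n}\K_n(N)$ from the conjugation rule $x_{ij}=\varpi_\F^{j-i}g_{ij}$, and verify term by term that everything lands in $\mathfrak{p}_\F$. Your write-up is slightly more self-contained (you derive the trace formula rather than stating it) and your remark that the $i=1$ term is sharp and forces $N=n(1+m/e)$ is a nice addition not made explicit in the paper.
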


\begin{proof} For $x=(x_{i,j})\in \U^{\lfloor m/2\rfloor +1}(\Lambda)$ the character $\psi_\beta$ is given explicitly by
$$\psi_{\beta}(x)=\psi\left(\sum_{i=1}^{n-1}x_{i,i+1}-\sum_{i=1}^{n-1}a_{i-1} x_{n,i} -a_{n-1}(x_{n,n}-1)\right).$$
Suppose now that $x\in\presuper{{\Sigma_{n}}}\K_n(n(1+m/e))$ and note that
$$\presuper{{\Sigma_{n}}}\K_n(n(1+m/e))= \begin{pmatrix}\mathfrak{o} &\mathfrak{p} &\mathfrak{p}^2&\cdots&\\
	\mathfrak{p}^{-1}&\mathfrak{o}&\mathfrak{p} &\\
	&\ddots &\ddots&\ddots&\\\\\\
	\mathfrak{p}^{nm/e+1}&\mathfrak{p}^{nm/e+2}&\cdots &\mathfrak{p}^{nm/e+(n-1)}&1+\mathfrak{p}^{n(m/e+1)}
\end{pmatrix}.$$
Hence $x_{i,i+1}$ is an element of $\mathfrak p$ for $i=1,,\dotsc, n-1$ and 
$$\psi\left(\sum_{i=1}^{n-1}x_{i,i+1}\right)=1.$$
Moreover, since $\nu_\F(a_i)\geqslant -m(n-i)/e$ we obtain that $a_{i-1}x_{n,i}\in\mathfrak p$ for $i=1,\dotsc,n-2$ and $a_{n-1}(x_{n,n}-1)\in\mathfrak p$ which implies the result.
\end{proof}

To go further than this we specialize to minimax representations of integral depth.  Note that a minimax cuspidal~$\pi$ is of integral depth if and only if~$e(\pi)=1$.  Let~$\pi$ be a minimax cuspidal with~$e(\pi)=1$ and let~$[\Lambda,m,0,\beta]$ be a simple stratum contained in~$\pi$ with~$[\Lambda,m,m-1,\beta]$ a minimax stratum. Note that~$\pi$ being of integral depth is equivalent to the extension~$\E=\F[\beta]$ being unramified. We change basis from~$\mathcal{B}$ to
\begin{align*}
\mathcal{B}'=\{\varpi^{km}\beta^k:0\leqslant k\leqslant n-1\}.\end{align*}
With respect to this basis
\begin{align*}
\Lambda(i)&=\bigoplus \mathfrak{p}_\F^i ;\\
\mathfrak{A}(\Lambda)&=\mathrm{M}_{n}(\mathfrak{o}),\quad \mathfrak{P}(\Lambda)=\M_n(\mathfrak{p}_\F)\\
\beta&=\begin{pmatrix}  &&&-\varpi^{(n-1)m} a_0\\
\varpi^{-m}&&&-\varpi^{(n-2)m} a_1\\
&\ddots&&\vdots\\
&&\varpi^{-m}&-a_{n-1}\end{pmatrix}.
\end{align*}
(In particular,~$\U^r(\Lambda)=1+\mathrm{M}_n(\mathfrak{p}^r)$.)  

Moreover, in this basis, the character~$\psi_{\beta}$ is given explicitly by
\begin{equation*}
\psi_{\beta}(x)=\psi\left(\left(\varpi^{-m}\sum_{i=1}^{n-1}x_{i,i+1}\right)-\left(\sum_{i=1}^{n-1}\varpi^{(n-i)m}a_{i-1} x_{n,i}\right) -a_{n-1}(x_{n,n}-1)\right)\end{equation*}
where~$x=(x_{ij})$, which on upper triangular unipotent matrices~$\U_n$ agrees with the non-degenerate character~$\psi^{\mathbf{t}_m}$ where~$\mathbf{t}_m=\diag(\varpi^{(n-1)m},\varpi^{(n-2)m},\ldots,1)$.

As explained in the last section, there is a simple character~$\theta\in\mathcal{C}(\Lambda,\beta)$, and an extension~$\boldsymbol\lambda$ of~$\eta_{\theta}$ to~$\mathbf{J}$ such that~$\pi\simeq \ind_{\mathbf{J}}^{\G}({\boldsymbol\lambda})$.  By Bushnell's formula, and Proposition \ref{Scconductor} in the modular setting,~$c(\pi)=n(m+1)$.  Let~
\[\Sigma_{m,n}=\mathrm{diag}(\varpi^{(m+1)(n-1)},\varpi^{(m+1)(n-2)},\ldots, \varpi^{m+1},1)=\mathbf{t}_m\Sigma_n,\] then
\[\presuper{{\Sigma_{m,n}}}\K_n(n(m+1))= \begin{pmatrix}\mathfrak{o} &\mathfrak{p}^{m+1} &\cdots&&\\
\mathfrak{p}^{-m-1}&\mathfrak{o}&\mathfrak{p}^{m+1} &\\
&\ddots &\ddots&\ddots&\\\\\\
\mathfrak{p}^{m+1}&\mathfrak{p}^{2(m+1)}&\cdots &\mathfrak{p}^{(n-1)(m+1)}&1+\mathfrak{p}^{n(m+1)}
\end{pmatrix}.\]

The aim of this section is to prove:
\begin{thm}\label{Propositionunramnewforms}
Let~$n\geqslant 2$ and~$\Lambda,\beta, \mathcal{B}'$ as above. Let~$\pi=\ind_{\mathbf{J}}^{\G_n}({\boldsymbol\lambda})$ be a unramified minimax cuspidal~$\R$-representation with cuspidal~$\R$-type~$(\J,\lambda)$, constructed using the simple stratum~$[\Lambda,m,0,\beta]$.
\begin{enumerate}
\item  Then~$c(\pi)=n(m+1)$ and~$\Hom_{\R[\K_n(c(\pi))]}(1,\pi)\simeq \R$.
\item The unique (up to scalar) non-zero newform~$f_{\new}\in\pi^{\K_n(n(m+1))}$ is given by 
\begin{align*}
\mathrm{supp}(f_{\new})&\subseteq\mathbf{J}\Sigma_{m,n}\K_n(n(m+1))\\
f_{\new}(j\Sigma_{m,n}k)&={\boldsymbol\lambda}(j)\cdot f_{\new}(\Sigma_{m,n}) \end{align*}
for all~$j\in\mathbf{J}$,~$k\in\K_n(n(m+1))$, and~$\mathrm{supp}(f_{\new}(\Sigma_{m,n}))=(\H^1(\J^1\cap\U_n))(\presuper{\Sigma_{m,n}}\K_n(n(m+1))\cap\J^1)$ with
\begin{align*}
f_{\new}(\Sigma_{m,n})(hk)=\theta_{\psi}(h)\end{align*}
for~$h\in\H^1(\J^1\cap\U_n)$, $k\in (\presuper{\Sigma_{m,n}}\K_n(n(m+1))\cap\J^1)$.
Moreover, there exists a unique~$\R$-Haar measure on~$\K_n(n(m+1))$ such that, for all~$g\in\G$,
\[f_{\new}(g)=\int_{\K_n(n(m+1))}\mathcal{J}(gk\Sigma_{m,n}^{-1})dk\]
where~$\mathcal{J}\in W({\boldsymbol\lambda},\theta_{\psi})\subseteq\ind_{(\U\cap \J^1)\H^1}^{\mathbf{J}}(\theta_{\psi})$ is the Paskunas--Stevens Bessel function.
\end{enumerate}
\end{thm}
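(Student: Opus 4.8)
The plan is to follow the blueprint of the proof of Theorem~\ref{depthzerocoset}, with the simple character~$\theta$ and its Heisenberg extension~$\eta_{\theta}$ taking over the role played there by the cuspidality of~$\tau$. The equality~$c(\pi)=n(m+1)$ has already been recorded above (Bushnell's formula together with Proposition~\ref{Scconductor}), so the content is to compute~$\Hom_{\R[\K_n(n(m+1))]}(1,\pi)$ and to pin down the invariant vector. Since~$\pi$ is unramified minimax we have~$e(\pi)=1$, so in the basis~$\mathcal{B}'$ the order is standard, $\mathfrak{A}(\Lambda)=\M_n(\mathfrak{o})$ and~$\U^r(\Lambda)=1+\M_n(\mathfrak{p}^r)$, and~$\mathbf{J}=\Z_n\J$ with~$\J=\mathfrak{o}_{\E}^\times\U^{\lfloor(m+1)/2\rfloor}(\Lambda)\subseteq\K_n$. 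As~$\mathbf{J}/\J$ is infinite cyclic and~$\K_n(n(m+1))$ is compact, Mackey theory gives
\[\Hom_{\R[\K_n(n(m+1))]}\bigl(1,\ind_{\mathbf{J}}^{\G_n}({\boldsymbol\lambda})\bigr)\;\simeq\;\bigoplus_{g\in\mathbf{J}\backslash\G_n/\K_n(n(m+1))}\Hom_{\R[\J\cap\presuper{g}\K_n(n(m+1))]}(1,{\boldsymbol\lambda}),\]
so everything reduces to deciding, over a convenient set of double coset representatives~$g$, which of these spaces is non-zero.

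First I would cut down the representatives. Because~${\boldsymbol\lambda}|_{\H^1}$ is~$\theta$-isotypic and~$\theta|_{\U^{\lfloor m/2\rfloor+1}(\Lambda)}=\psi_{\beta}$, a non-zero space~$\Hom_{\R[\J\cap\presuper{g}\K_n(n(m+1))]}(1,{\boldsymbol\lambda})$ forces~$\psi_{\beta}$ to be trivial on~$\U^{\lfloor m/2\rfloor+1}(\Lambda)\cap\presuper{g}\K_n(n(m+1))$. Feeding the explicit formula for~$\psi_{\beta}$ in the basis~$\mathcal{B}'$ (superdiagonal part~$\varpi^{-m}\sum_i x_{i,i+1}$, last-row part governed by the companion coefficients with~$\nu_{\F}(a_{i-1})\geqslant-m(n-i)$) into the coset representatives for~$\K_n\Z_n\backslash\G_n/\K_n(n(m+1))$ coming from Lemma~\ref{cosets1} and its refinement, together with the version of Lemma~\ref{Lemmapsibeta} in the basis~$\mathcal{B}'$ (which reads~$\psi_{\beta}|_{\U^{\lfloor m/2\rfloor+1}(\Lambda)\cap\presuper{{\Sigma_{m,n}}}\K_n(n(m+1))}=1$), the same kind of matrix computations as in the three propositions before the proof of Theorem~\ref{depthzerocoset} should leave only those~$g$ in the same double coset as a diagonal matrix~$\diag(\varpi^{\alpha_{n-1}},\dots,\varpi^{\alpha_1},1)$ whose exponents, together with the trailing~$0$, are strictly decreasing with consecutive gaps~$\geqslant m+1$, subject to an upper bound on~$\alpha_{n-1}$ depending on the level. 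One then promotes ``$\psi_{\beta}$ trivial'' to a dimension count: restricting~${\boldsymbol\lambda}$ along~$\J^1\cap\presuper{g}\K_n(n(m+1))\twoheadrightarrow\J^1/\H^1$ (a symplectic~$k_{\F}$-space for the form attached to~$\theta$) and along~$\J/\J^1\simeq k_{\E}^\times$, the non-vanishing reduces to the image being maximal isotropic with the relevant characters trivial — the minimax counterpart of Proposition~\ref{onedim} — giving a one-dimensional Hom-space for such~$g$ and zero otherwise. A count now shows that at level~$n(m+1)$ the only surviving exponent vector is~$\alpha_i=i(m+1)$, i.e.~$g=\Sigma_{m,n}$, so~$c(\pi)=n(m+1)$ and~$\Hom_{\R[\K_n(c(\pi))]}(1,\pi)\simeq\R$.

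Reversing the Mackey isomorphism, the generator of the surviving Hom-space corresponds to the function~$f_{\new}\in\ind_{\mathbf{J}}^{\G_n}({\boldsymbol\lambda})$ supported on~$\mathbf{J}\Sigma_{m,n}\K_n(n(m+1))$ with~$f_{\new}(j\Sigma_{m,n}k)={\boldsymbol\lambda}(j)f_{\new}(\Sigma_{m,n})$, where~$f_{\new}(\Sigma_{m,n})$ is the vector in the space of~${\boldsymbol\lambda}$ fixed by~$\J\cap\presuper{{\Sigma_{m,n}}}\K_n(n(m+1))$. Realizing~${\boldsymbol\lambda}$ inside~$\ind_{(\U_n\cap\J^1)\H^1}^{\mathbf{J}}(\theta_{\psi})$ through the Paskunas--Stevens model~$W({\boldsymbol\lambda},\theta_{\psi})$, this vector is the function on~$\mathbf{J}$ supported on~$(\H^1(\J^1\cap\U_n))(\presuper{{\Sigma_{m,n}}}\K_n(n(m+1))\cap\J^1)$ equal to~$\theta_{\psi}$ on the left factor and to~$1$ on the right; it is well defined exactly because the~$\mathcal{B}'$-version of Lemma~\ref{Lemmapsibeta} makes~$\theta_{\psi}$ (equivalently~$\psi_{\beta}$ and~$\Psi$) trivial on the overlap of the two factors, and non-zero by the~(B\ref{besselfunction1})-type support property of the Paskunas--Stevens Bessel function~$\mathcal{J}$ (evaluating at the identity, as in the proof of Theorem~\ref{depthzerocoset}; cf.~\cite{PaskSte}). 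Well-definedness of~$f_{\new}$ itself is then just the statement that~$f_{\new}(\Sigma_{m,n})$ is~$\J\cap\presuper{{\Sigma_{m,n}}}\K_n(n(m+1))$-fixed, which is how it is built, and its right~$\K_n(n(m+1))$-invariance is forced by the support.

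For the integral formula I would argue as in the proof of Theorem~\ref{depthzerocoset}\eqref{depthzeromain3}. Viewing~$\mathcal{J}$ (extended by zero off~$\mathbf{J}$) as the Bessel vector in~$\ind_{\mathbf{J}}^{\G_n}({\boldsymbol\lambda})$, the map~$g\mapsto\int_{\K_n(n(m+1))}\mathcal{J}(gk\Sigma_{m,n}^{-1})\,dk$ is manifestly right~$\K_n(n(m+1))$-invariant, so by the one-dimensionality just established it is a scalar multiple of~$f_{\new}$, and the only point left is that the scalar is non-zero. As before, this is seen by evaluating at~$g=\Sigma_{m,n}$ and then at a suitable point of~$\mathbf{J}$: decomposing~$k\in\K_n(n(m+1))$ into an upper-unipotent times a lower part, conjugation by~$\Sigma_{m,n}$ pushes the lower part into~$1+\M_n(\mathfrak{p})$ and leaves a contribution inside~$(\H^1(\J^1\cap\U_n))(\presuper{{\Sigma_{m,n}}}\K_n(n(m+1))\cap\J^1)$, whereupon the support property of~$\mathcal{J}$ makes the integral a non-zero constant; the~$\R$-Haar measure on~$\K_n(n(m+1))$ is normalized so that this constant reproduces the stated value~$f_{\new}(\Sigma_{m,n})$. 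I expect the main obstacle to be the middle step, namely the minimax analogue of the three technical propositions before the proof of Theorem~\ref{depthzerocoset}, now encumbered by the~$\psi_{\beta}$-twist and the companion-matrix valuations of the~$a_i$, followed by the Heisenberg/isotropy argument upgrading ``$\psi_{\beta}$ trivial on the intersection'' to dimension exactly one; a secondary but genuine nuisance is keeping the change of basis from~$\mathcal{B}$ to~$\mathcal{B}'$ and the~$\mathbf{t}_m$-twist (which turns~$\Sigma_n$ into~$\Sigma_{m,n}=\mathbf{t}_m\Sigma_n$ and~$\psi$ into~$\psi^{\mathbf{t}_m}$) under control, in particular proving the~$\mathcal{B}'$-version of Lemma~\ref{Lemmapsibeta}.
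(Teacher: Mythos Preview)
Your plan is a direct Mackey analysis over all of~$\mathbf{J}\backslash\G_n/\K_n(n(m+1))$, proving minimax analogues of the three propositions preceding Theorem~\ref{depthzerocoset}. The paper takes a genuinely different and shorter route: rather than classifying all contributing cosets from scratch, it exhibits just the single coset~$\mathbf{J}\Sigma_{m,n}\K_n(n(m+1))$, shows the associated Hom-space is non-zero, and then invokes the one-dimensionality of~$\pi^{\K_n(c(\pi))}$ already known in characteristic zero from \cite{JPSS} to conclude both that this coset is the only one contributing and that the Frobenius-reciprocity embedding
\[
\Phi:\R\hookrightarrow\Hom_{\J^1\cap\presuper{\Sigma_{m,n}}\K_n(n(m+1))}\bigl(1,\ind_{(\U_n\cap\J^1)\H^1}^{\J^1}(\theta_\psi)\bigr)=\Hom_{\mathbf{J}\cap\presuper{\Sigma_{m,n}}\K_n(n(m+1))}(1,{\boldsymbol\lambda})
\]
is an isomorphism. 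The key technical input is not a coset-by-coset elimination but Lemma~\ref{groupintersectionlemma}: an explicit computation, using that~$\{(\varpi^m\beta)^k\}$ is an~$\mathfrak{o}_\F$-basis of~$\mathfrak{o}_\E$, of the intersections of~$\H^1$, $(\U_n\cap\J^1)\H^1$, and~$\mathbf{J}$ with~$\presuper{\Sigma_{m,n}}\K_n(n(m+1))$, in particular showing that~$\mathbf{J}\cap\presuper{\Sigma_{m,n}}\K_n(n(m+1))=\J^1\cap\presuper{\Sigma_{m,n}}\K_n(n(m+1))$. Combined with the~$\mathcal{B}'$-version of Lemma~\ref{Lemmapsibeta} this makes~$\theta_\psi$ trivial on the relevant intersection and produces~$\Phi$. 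Positive characteristic is then handled by noting that the right-hand side is a Hom-space over the pro-$p$ group~$\J^1$, so its dimension is unchanged under reduction mod~$\ell$; since every lift contributes the same space, one-dimensionality descends. The integral formula is proved essentially as you outline.

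This sidesteps precisely the obstacle you flag as the main difficulty. Your approach, if completed, would buy a self-contained proof independent of \cite{JPSS}, matching what Theorem~\ref{depthzerocoset} achieves in depth zero; but the three-proposition analogue is genuinely harder here, and the paper simply does not attempt it. Note also a wrinkle in your sketch: since~$\J\subsetneq\K_n$ for~$m\geqslant1$, Lemma~\ref{cosets1} gives representatives for~$\K_n\Z_n\backslash\G_n/\K_n(n(m+1))$ rather than for~$\mathbf{J}\backslash\G_n/\K_n(n(m+1))$, so a further refinement within each~$\K_n\Z_n$-coset would be required before your elimination argument could even begin.
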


We need the following basic lemma on intersections:

\begin{lemma}\label{groupintersectionlemma}
\begin{enumerate}
\item \label{intersection1}
$\H^1\cap \presuper{{\Sigma_{m,n}}}\K_n(n(m+1))=\U^{\lfloor m/2\rfloor +1}(\Lambda)\cap  \presuper{{\Sigma_{m,n}}}\K_n(n(m+1)).$
\item\label{intersection2} $(\U\cap\J^1)\H^1 \cap \presuper{{\Sigma_{m,n}}}\K_n(n(m+1))=(\U\cap \U^{\lfloor m+1/2\rfloor}(\Lambda))\U^{\lfloor m/2\rfloor +1}(\Lambda)\cap  \presuper{{\Sigma_{m,n}}}\K_n(n(m+1)).$
\item\label{intersection3} $\mathbf{J}\cap \presuper{{\Sigma_{m,n}}}\K_n(n(m+1))=\J^1\cap   \presuper{{\Sigma_{m,n}}}\K_n(n(m+1))$.
\end{enumerate}
\end{lemma}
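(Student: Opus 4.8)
The plan is to prove the three identities uniformly by the same mechanism: in each case the left-hand group $H$ sits in a product decomposition $H = H' \cdot \U^{\lfloor m/2\rfloor+1}(\Lambda)$ (or a similar one), where $H'$ is a ``transversal'' piece — respectively $1+\mathfrak{p}_\E$, $(\U\cap \J^1)$ times $1+\mathfrak{p}_\E$, and $\E^\times$ — and one shows that the only part of $H'$ that can survive intersection with $\presuper{\Sigma_{m,n}}\K_n(n(m+1))$ already lies inside $\U^{\lfloor m/2\rfloor+1}(\Lambda)$ (or inside $\J^1$ for part \eqref{intersection3}). Concretely, I would first record the explicit matrix description of the conjugated congruence subgroup
\[
\presuper{\Sigma_{m,n}}\K_n(n(m+1)) = \left(\begin{smallmatrix}
1+\mathfrak{p} & \mathfrak{p}^{m+1} & \cdots & \mathfrak{p}^{(n-2)(m+1)} & \mathfrak{p}^{(n-1)(m+1)}\\
\mathfrak{p}^{-m-1} & 1+\mathfrak{p} & & & \vdots\\
\vdots & & \ddots & & \\
\mathfrak{p}^{-(n-2)(m+1)} & & & 1+\mathfrak{p} & \mathfrak{p}^{m+1}\\
\mathfrak{p}^{-(n-1)(m+1)+?} & \cdots & & \mathfrak{p}^{m+1} & 1+\mathfrak{p}^{n(m+1)}
\end{smallmatrix}\right)
\]
(the precise exponents are exactly those appearing in the display just before the theorem, up to the refinement that the diagonal entries, last row and last column are further constrained by membership in $\K_n(n(m+1))$), and note that intersecting with $\K_n = \GL_n(\mathfrak{o}_\F)$ forces every entry to have valuation $\geqslant 0$, so in fact the relevant intersection lands in $\GL_n(\mathfrak{o}_\F) \cap \presuper{\Sigma_{m,n}}\K_n(n(m+1))$, which is supported on $\mathrm{M}_n(\mathfrak{o}_\F)$ with the last row in $\mathfrak{p}^{m+1}$ off the $(n,n)$-entry and the $(n,n)$-entry in $1+\mathfrak{p}^{n(m+1)}$.

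For \eqref{intersection1}: since $\H^1 = (1+\mathfrak{p}_\E)\U^{\lfloor m/2\rfloor+1}(\Lambda)$ and $\U^{\lfloor m/2\rfloor+1}(\Lambda) = 1+\M_n(\mathfrak{p}^{\lfloor m/2\rfloor+1})\subseteq \K_n$ is already contained in $\presuper{\Sigma_{m,n}}\K_n(n(m+1))$?—this last containment is \emph{not} automatic and must be checked: one verifies that for $1+y$ with $y\in\M_n(\mathfrak{p}^{\lfloor m/2\rfloor+1})$ the conjugate $\Sigma_{m,n}^{-1}(1+y)\Sigma_{m,n}$ need not lie in $\K_n(n(m+1))$, so in fact the statement is the sharper assertion that the $1+\mathfrak{p}_\E$ part contributes nothing new. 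So the real content is: an element $h = (1+c)u$ with $c\in\mathfrak{p}_\E$, $u\in\U^{\lfloor m/2\rfloor+1}(\Lambda)$ lies in $\presuper{\Sigma_{m,n}}\K_n(n(m+1))$ iff $(1+c)\in\U^{\lfloor m/2\rfloor+1}(\Lambda)$ (equivalently $c\in\mathfrak{p}_\E\cap\M_n(\mathfrak{p}^{\lfloor m/2\rfloor+1})$), because the image of $1+\mathfrak{p}_\E$ in the quotient $\H^1/\U^{\lfloor m/2\rfloor+1}(\Lambda)\cong \mathfrak{p}_\E/(\mathfrak{p}_\E\cap \mathfrak{p}^{\lfloor m/2\rfloor+1})$ must be killed. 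To see this, I would use that $\beta$ in the basis $\mathcal{B}'$ is the scaled companion matrix displayed above, compute $1+c = $ polynomial in $\beta$, and check valuations entry-by-entry against the off-diagonal constraints of $\presuper{\Sigma_{m,n}}\K_n(n(m+1))$: a nonzero $c\in\mathfrak{p}_\E\setminus\M_n(\mathfrak{p}^{\lfloor m/2\rfloor+1})$ produces an entry violating one of the $\mathfrak{p}^{k(m+1)}$ (or $\mathfrak{p}^{-k(m+1)}$) bounds, using $\nu_\F(a_i)\geqslant -m(n-i)/e$ and $e=1$, $m$ the depth. Parts \eqref{intersection2} and \eqref{intersection3} follow the identical template: in \eqref{intersection2} one only has to additionally observe that $\U\cap\J^1 = \U\cap \U^{\lfloor (m+1)/2\rfloor}(\Lambda)$ (immediate from $\J^1 = (1+\mathfrak{p}_\E)\U^{\lfloor(m+1)/2\rfloor}(\Lambda)$ and the fact that upper-triangular unipotents meet $1+\mathfrak{p}_\E$ trivially except in the center—more precisely $\U_n\cap(1+\mathfrak{p}_\E)$ is trivial since $\E$ embeds with $\E^\times$ meeting $\U_n$ only in $1$), so the decomposition $(\U\cap\J^1)\H^1 = (\U\cap\U^{\lfloor(m+1)/2\rfloor}(\Lambda))\cdot(1+\mathfrak{p}_\E)\cdot\U^{\lfloor m/2\rfloor+1}(\Lambda)$ reduces us to the computation already done; in \eqref{intersection3} one uses $\mathbf{J} = \E^\times\U^{\lfloor(m+1)/2\rfloor}(\Lambda)$ together with $\mathbf{J}\cap\K_n$ having $\E^\times$-part landing in $\mathfrak{o}_\E^\times$, so $\mathbf{J}\cap\K_n = \J = \mathfrak{o}_\E^\times\U^{\lfloor(m+1)/2\rfloor}(\Lambda)$, and then the same valuation bookkeeping shows the $\mathfrak{o}_\E^\times/(1+\mathfrak{p}_\E)$-part (i.e. the ``level zero'' part) cannot survive conjugation into $\K_n(n(m+1))$ unless it is trivial modulo $1+\mathfrak{p}_\E$, forcing membership in $\J^1$.

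In all three cases the inclusion ``$\supseteq$'' is trivial (the right-hand side is visibly contained in the left-hand side), so only ``$\subseteq$'' needs work, and that reduces to the single valuation estimate sketched above. The main obstacle — really the only one — is handling the $(n,1)$-entry and the first column of the conjugated subgroup $\presuper{\Sigma_{m,n}}\K_n(n(m+1))$, whose entries sit in $\mathfrak{p}^{-k(m+1)}$ for $k$ up to $n-1$, i.e. they are \emph{not} integral; one must check carefully that the companion-matrix shape of $\beta$ (with $\nu_\F(a_0) = -mn/e = -mn$) interacts with these negative-valuation slots in exactly the right way so that the combined condition ``lies in $\K_n$'' (all entries integral) \emph{and} ``lies in $\presuper{\Sigma_{m,n}}\K_n(n(m+1))$'' pins down the $\E$-part to the claimed filtration level. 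I expect this to come down to comparing, for $c = \sum_{k} c_k \varpi^{km}\beta^k$ with $c_k\in\mathfrak{o}_\F$, the valuation $\nu_\F$ of the $(i,j)$-entry of $c$ against the bound $|i-j|(m+1)$ (with appropriate sign), and showing the minimum is achieved in a way that forces $c_k\in\mathfrak{p}_\F^{\lceil(m+1)/2\rceil - \text{something}}$ — a short but slightly delicate computation that I would carry out once and invoke in all three parts.
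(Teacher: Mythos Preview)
Your overall strategy --- isolate the $\E^\times$ (or $1+\mathfrak{p}_\E$) factor and show by a valuation argument that it must fall into the deeper filtration --- matches the paper's, but you miss the organizing trick that makes the computation tractable. The paper proves a single uniform claim: for $1\leqslant r\leqslant m+1$, if $p\in\P_n$ (the mirabolic), $x\in\mathfrak{o}_\E$, $u\in\U^r(\Lambda)$ and $pxu\in\presuper{\Sigma_{m,n}}\K_n(n(m+1))$, then $x\in 1+\mathfrak{p}_\E^r$. All three parts follow immediately (take $p=1$ for \eqref{intersection1} and \eqref{intersection3}, and let $p$ absorb the $\U_n\cap\J^1$-factor for \eqref{intersection2}). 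The point of the mirabolic $p$ is that it does not disturb the \emph{last row} of $xu$, and the last row is the only row of $\presuper{\Sigma_{m,n}}\K_n(n(m+1))$ carrying strong integral constraints: the $(n,j)$-entry lies in $\mathfrak{p}^{j(m+1)}$ for $j<n$ and the $(n,n)$-entry in $1+\mathfrak{p}^{n(m+1)}$. Writing $x=\sum_{j=0}^{n-1}\alpha_j(\varpi^m\beta)^j$ with $\alpha_j\in\mathfrak{o}_\F$ and observing that the bottom row of $(\varpi^m\beta)^i$ has the shape $(0,\ldots,0,1,r^i_i,\ldots,r^i_1)$ (the $1$ in position $n-i$, integral entries to its right), one reads off $\alpha_{n-1}\in\mathfrak{p}^r$ from the $(n,1)$-entry, then $\alpha_{n-2}\in\mathfrak{p}^r$ from the $(n,2)$-entry, and so on --- a clean left-to-right induction along one row.

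Your proposed entry-by-entry check against the bound $|i-j|(m+1)$ will not work as written: the constraint on the $(i,j)$-entry for $i<n$ is $\mathfrak{p}^{(j-i)(m+1)}$ with \emph{signed} exponent, so for $j<i$ (strictly lower triangular, away from the last row) you get a negative power of $\mathfrak{p}$ and hence no information once you intersect with $\M_n(\mathfrak{o}_\F)$. The last row is special precisely because the $\mathfrak{p}^{n(m+1)}$ coming from $\K_n(n(m+1))$ beats the negative shift from conjugation. Separately, your justification that $\U_n\cap\J^1=\U_n\cap\U^{\lfloor(m+1)/2\rfloor}(\Lambda)$ does not hold up: knowing that $\U_n\cap(1+\mathfrak{p}_\E)=\{1\}$ does not show that an upper-unipotent element lying in the \emph{product} $(1+\mathfrak{p}_\E)\U^{\lfloor(m+1)/2\rfloor}(\Lambda)$ already lies in the second factor. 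The paper never needs this equality --- it simply absorbs the $\U_n\cap\J^1$ factor into the mirabolic $p$ in the uniform claim and argues on the last row alone.
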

{\begin{proof}
All these statements follow from the following one:  for all~$1\leqslant r\leqslant m+1$, for~$p\in\P_n, x\in\mathfrak{o}_{\E},$ and~$u\in\U^r(\Lambda)$
\[pxu\in  \presuper{{\Sigma_{m,n}}}\K_n(n(m+1))\Rightarrow x\in1+\mathfrak{p}_{\E}^{r},\]
where~$\P_n$ here denotes the standard mirabolic subgroup.  As $\varpi^m\beta+\mathfrak p_E$ generates~$k_\E/k_\F$ we have $\mathfrak o_\E=\mathfrak o_\F[\varpi^m\beta]$.   Note that $\varpi^m\beta\in \M_n(\mathfrak o_\F)$. Since $x\in\mathfrak{o}_{\E}$, we can write
\[x=\sum_{j=0}^{n-1}\alpha_j(\varpi^m\beta)^j,\]
where~$\alpha_j\in\mathfrak o_\F$. For $1\leqslant i\leqslant n-1$,  let $v_i$ denote the bottom row of $(\varpi^m\beta)^i$, then one can compute that 
\[v_i=(0,\dotsc,0,1,r_i^{i},\dotsc,r_1^{i}),\]
where $r_1^{i},\dotsc,r_i^{i}\in\mathfrak o_\F$.  Set $x=(x_{ij})$, then for $1\leqslant j\leqslant n$, we clearly have that 
\[x_{nj}=\alpha_{n-j}+\sum_{l=1}^{j-1}\alpha_{n-l}r_{n-j-1}^{n-l}.\]
We also write $u=(u_{ij})$, where $u_{ij}\in\delta_{i,j}+\mathfrak p_\F^{r}$.  Then the $(n,1)$-entry of $pxu$ is equal to
	\[\alpha_{n-1}u_{11}+\sum_{j=2}^{n}x_{nj}u_{j1}\in\mathfrak p^r_\F,\]
	and since $u_{11}\in\mathcal O^\times$ and $u_{j1}\in\mathfrak p_\F^{r}$ for $j\geqslant 2$, we immediately obtain that $\alpha_{n-1}\in\mathfrak p_\F^{r}$.  Suppose now we have shown that $\alpha_{n-1},\dotsc,\alpha_{n-k}\in\mathfrak p_\F^{r}$ for some $1\leqslant k\leqslant n-1$. Then the $(n,k+1)$ entry of $pxu$ is
	$$u_{k+1,k+1}\left(\alpha_{n-k-1}+\sum_{l=1}^k\alpha_{n-l}r_{n-l}^{n-k}\right)+\sum_{\substack{1\leqslant j\leqslant n\\ j\not=k+1}}x_{nj}u_{j,k+1}\in\delta_{n,k+1}+\mathfrak p_F^{(k+1)(m+1)},$$ 
	which immediately implies that $\alpha_{n-k-1}\in\mathfrak p_\F^{r}$. Inductively, we obtain that $\alpha_1,\dotsc,\alpha_{n-1}\in\mathfrak p_\F^{r}$, and applying the inductive step one more time~$\alpha_0\in 1+\mathfrak{p}_\F^r$.  Since $\mathfrak p_\E^{r}=\mathfrak p_\F^{r}\mathfrak o_\E$ this implies that $x\in 1+\mathfrak p_\E^{r}$.
\end{proof}	
}

\begin{proof}[Proof of Theorem \ref{Propositionunramnewforms}]
We have already seen~$c(\pi)=n(m+1)$ and one dimensionality over~$\R$ of characteristic zero follows from \cite{JPSS}.  
From Lemma \ref{Lemmapsibeta}
\[\psi_{\beta}\mid_{\U^{\lfloor m/2\rfloor +1}(\Lambda)\cap \presuper{{\Sigma_{m,n}}}\K_n(n(m+1))}=1.\]
By Lemma \ref{groupintersectionlemma} \eqref{intersection1}, it follows that for any~$\theta\in\mathcal{C}(\Lambda,\beta)$
\[\theta\mid_{\H^1\cap \presuper{{\Sigma_{m,n}}}\K_n(n(m+1))}=\psi_{\beta}\mid_{\U^{\lfloor m/2\rfloor +1}(\Lambda)\cap  \presuper{{\Sigma_{m,n}}}\K_n(n(m+1))}=1.\]

Next we claim that~$\theta_{\psi}\mid_{(\U_n\cap\J^1) \H^1\cap  \presuper{{\Sigma_{m,n}}}\K_n(n(m+1))}=1$.  Indeed, via Lemma \ref{groupintersectionlemma} \eqref{intersection2}, we can write~$x\in (\U_n\cap\J^1) \H^1\cap  \presuper{{\Sigma_{m,n}}}\K_n(n(m+1))$ as~$x=uu'$ with~$u\in(\U_n\cap \U^{\lfloor m+1/2\rfloor}(\Lambda))$, and~$u'\in\U^{\lfloor m/2\rfloor +1}(\Lambda)$.  Moreover, the final row of $u'$ is contained in~$\left(\begin{smallmatrix} \mathfrak{p}_{\F}^{m+1}\cdots \mathfrak{p}_\F^{(n-1)(m+1)} (1+\mathfrak{p}_\F^{n(m+1)})\end{smallmatrix}\right)$ hence~$\psi_{\beta}(u')=\psi(\varpi^{-m}\sum_{i=1}^{n-1}u'_{i,i+1})$ and it suffices to notice that~$u_{i,i+1}+u'_{i,i+1}\in\mathfrak{p}_\F^{m+1}$. 
This completes the claim.

By Mackey theory and Frobenius reciprocity, we have an embedding
\begin{equation*}
\Phi:\R=\Hom_{(\U_n\cap\J^1)\H^1\cap  \presuper{{\Sigma_{m,n}}}\K_n(n(m+1))}(1,\theta_{\psi})\hookrightarrow \Hom_{\J^1\cap  \presuper{{\Sigma_{m,n}}}\K_n(n(m+1))}(1,\ind_{(\U_n\cap\J^1) \H^1}^{\J^1}(\theta_{\psi})).\end{equation*}
Moreover, from Lemma \ref{groupintersectionlemma} \eqref{intersection3}, we have
\[\Hom_{\mathbf{J}\cap  \presuper{{\Sigma_{m,n}}}\K_n(n(m+1))}(1,{\boldsymbol\lambda})=\Hom_{\J^1\cap  \presuper{{\Sigma_{m,n}}}\K_n(n(m+1))}(1,\ind_{(\U_n\cap\J^1) \H^1}^{\J^1}(\theta_{\psi})),\]
and it follows from one dimensionality over algebraically closed fields of characteristic zero that $\Phi$ is an isomorphism.   

Moreover, it follows from reduction mod~$\ell$,  that
\[\Hom_{\J^1\cap  \presuper{{\Sigma_{m,n}}}\K_n(n(m+1))}(1,\ind_{(\U_n\cap\J^1) \H^1}^{\J^1}(\theta_{\psi}))=\R\] for all algebraically closed fields (as it is a Hom-space over a pro-$p$ group and~$\ell\neq p$), which allows us to deduce the one dimensionality 
in positive characteristic too (as all lifts are contributing the same Hom-space, this is the unique one mod~$\ell$ and has dimension one by the previous lifting argument).  The explicit form of the vector~$f_{\new}$ follows from reversing the Mackey theory.

For the final integral formula, as it is clearly~$\K_n(n(m+1))$-invariant, it suffices to show it is non-zero.  Evaluating the integral at~$\Sigma_{m,n}$ we obtain
\[\int_{\K_n(n(m+1))} \mathcal{J}(\Sigma_{m,n}k\Sigma_{m,n}^{-1})dk=\int_{\J^1
\cap\K_n(n(m+1))} \mathcal{J}(\Sigma_{m,n}k\Sigma_{m,n}^{-1})dk,\]
by Lemma \ref{groupintersectionlemma} \eqref{intersection2}.  Writing 
\[k=\left( \begin{smallmatrix} A& \underline{u}\\ k_{n,1}\cdots k_{n,n-1}&1+k_{n,n}\end{smallmatrix}\right)\]
and setting $k_P=\left( \begin{smallmatrix} A& \underline{u}\\ 0\cdots0&1\end{smallmatrix}\right)$, by \cite[Proposition 5.3 (iii) \& (iv)]{PaskSte} we have
\begin{align*}
\mathcal{J}(\Sigma_{m,n}k\Sigma_{m,n}^{-1})=\mathcal{J}(\Sigma_{m,n}k_P\Sigma_{m,n}^{-1})=
\begin{cases}
\theta_{\psi}(\Sigma_{m,n}k_P\Sigma_{m,n}^{-1}),&\text{if }\Sigma_{m,n}k_P\Sigma_{m,n}^{-1}\in (\U_n\cap\J^1)\H^1\\
0&\text{otherwise.}\end{cases}
\end{align*}
Moreover, $\mathcal{J}\mid_{(\U_n\cap\J^1) \H^1\cap  \presuper{{\Sigma_{m,n}}}\K_n(n(m+1))}$ is trivial as it is a multiple of~$\theta_{\psi}$ which is trivial when restricted to this subgroup, so the integral is a volume of a pro-$p$ group and we are done.
\end{proof}

\begin{remark}
The key properties needed for the proof are
\begin{enumerate}
\item  the restriction of~$\theta_{\psi}$ to the intersection of its domain with~$\presuper{\Sigma}\K_n(c(\pi))$ is trivial;
\item  $\J\cap \presuper{\Sigma}\K_n(c(\pi))=\J^1\cap\presuper{\Sigma}\K_n(c(\pi))$.
\end{enumerate}
Currently we are missing a construction of such an element~$\Sigma$ in the general minimax case, although Lemma 
\ref{Lemmapsibeta} suggests a candidate (there in the companion matrix basis).
\end{remark}

\begin{cor}\label{WhittcorollaryURminimax}
Let~$n\geqslant 2$ and~$\Lambda,\beta, \mathcal{B}'$ as above. Let~$\pi=\ind_{\mathbf{J}}^{\G_n}({\boldsymbol\lambda})$ be an unramified minimax cuspidal~$\R$-representation with cuspidal~$\R$-type~$(\J,\lambda)$, constructed using the simple stratum~$[\Lambda,m,0,\beta]$ and fixed character~$\psi:\F\rightarrow \mathbb{C}^\times$ of conductor $\mathfrak{p}_\F$.  Then~$\psi^{\Sigma_n}$ has conductor~$\mathfrak{o}_\F$, and
\[\W_{\pi,\new,\psi^{\Sigma_n}}(g)=\int_{\K_n(n(m+1))}\W_{\pi,\mathrm{PS},\psi^{\mathbf{t}_m}}(\Sigma_{m,n} gk \Sigma_{m,n}^{-1}) dk,\]
where~$\W_{\pi,\mathrm{PS},\psi^{\mathbf{t}_m}}\in W(\pi,\psi^{\mathbf{t}_m})$ is the Paskunas--Stevens Whittaker function of \cite[Theorem 5.8]{PaskSte} and \cite[Section 7]{ALSX}.
\end{cor}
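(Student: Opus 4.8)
The plan is to transport the integral formula for the newform vector in Theorem~\ref{Propositionunramnewforms}(2) through the Whittaker model map, using that the Paskunas--Stevens Whittaker function is (up to normalization) the image of their Bessel function $\mathcal{J}$ under that map. This is the minimax counterpart of the way Proposition~\ref{depthzerowhittakernewform} extracts its second integral expression from Theorem~\ref{depthzerocoset}(3).

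First I would settle the character bookkeeping. As $\psi$ has conductor $\mathfrak{p}_\F$ and conjugation by $\Sigma_n$ scales each superdiagonal entry of $\U_n$ by $\varpi$, the character $\psi^{\Sigma_n}$ of $\U_n$ has conductor $\mathfrak{o}_\F$. Writing $\Sigma_{m,n}=\mathbf{t}_m\Sigma_n$ and using that $\psi^{\mathbf{t}_m}$ restricts on $\U_n$ to $x\mapsto\psi(\varpi^{-m}\sum_i x_{i,i+1})$ (that is, to $\psi_\beta|_{\U_n}$), one checks $\psi^{\mathbf{t}_m}(\Sigma_{m,n}u\Sigma_{m,n}^{-1})=\psi^{\Sigma_n}(u)$ for $u\in\U_n$. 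Consequently, for any $F\in\W(\pi,\psi^{\mathbf{t}_m})$ the function $g\mapsto F(\Sigma_{m,n}g)$ lies in $\W(\pi,\psi^{\Sigma_n})$, and $v\mapsto\bigl(g\mapsto W_{\psi^{\mathbf{t}_m}}(v)(\Sigma_{m,n}g)\bigr)$ is a nonzero $\G_n$-equivariant map $\pi\to\W(\pi,\psi^{\Sigma_n})$; by uniqueness of the Whittaker model it agrees with the $\psi^{\Sigma_n}$-Whittaker map up to a nonzero scalar.

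Next I would feed in Theorem~\ref{Propositionunramnewforms}(2), read as $f_\new=\int_{\K_n(n(m+1))}\pi(k\Sigma_{m,n}^{-1})\mathcal{J}\,dk$ with $\mathcal{J}$ viewed as the vector of $\pi=\ind_{\mathbf{J}}^{\G_n}({\boldsymbol\lambda})$ supported on $\mathbf{J}$ arising from the Paskunas--Stevens Bessel function, together with the identity $\W_{\pi,\mathrm{PS},\psi^{\mathbf{t}_m}}=W_{\psi^{\mathbf{t}_m}}(\mathcal{J})$ furnished by \cite[Theorem 5.8]{PaskSte} and \cite[Section 7]{ALSX}. Applying $W_{\psi^{\mathbf{t}_m}}$ to the integral, using $\G_n$-equivariance and compactness of $\K_n(n(m+1))$, yields $W_{\psi^{\mathbf{t}_m}}(f_\new)(g)=\int_{\K_n(n(m+1))}\W_{\pi,\mathrm{PS},\psi^{\mathbf{t}_m}}(gk\Sigma_{m,n}^{-1})\,dk$; composing with the left-translation-by-$\Sigma_{m,n}$ isomorphism from the previous paragraph converts this into
\[W_{\psi^{\Sigma_n}}(f_\new)(g)=c\int_{\K_n(n(m+1))}\W_{\pi,\mathrm{PS},\psi^{\mathbf{t}_m}}(\Sigma_{m,n}gk\Sigma_{m,n}^{-1})\,dk,\]
with $c\neq0$ because $W_{\psi^{\mathbf{t}_m}}$ is injective on the irreducible $\pi$ and $f_\new\neq0$. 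The right-hand side is right $\K_n(n(m+1))$-invariant, so by Theorem~\ref{Propositionunramnewforms}(1) it is a nonzero multiple of the Whittaker newform $\W_{\pi,\new,\psi^{\Sigma_n}}$; evaluating at $g=1$, where the integrand is supported on a pro-$p$ subgroup by the support of $\W_{\pi,\mathrm{PS}}$ and the intersection computations of Lemma~\ref{groupintersectionlemma} --- exactly as in the closing lines of the proof of Theorem~\ref{Propositionunramnewforms} --- the value is a unit in $\R$, which can be absorbed, together with $c$, into the normalization of $dk$. This gives the asserted formula.

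The step I expect to be the main obstacle is making the identification $\W_{\pi,\mathrm{PS},\psi^{\mathbf{t}_m}}=W_{\psi^{\mathbf{t}_m}}(\mathcal{J})$ precise from \cite{PaskSte,ALSX}: one must check that the nondegenerate character used there matches $\psi^{\mathbf{t}_m}$ (they agree on $\U_n$ with $\psi_\beta$ thanks to the companion-type basis $\mathcal{B}'$) and keep the normalization of $\mathcal{J}$ consistent with the one in Theorem~\ref{Propositionunramnewforms}(2). The rest --- conjugating characters by $\Sigma_{m,n}=\mathbf{t}_m\Sigma_n$ and reusing the pro-$p$ volume argument --- is routine.
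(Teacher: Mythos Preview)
Your approach is correct and is exactly the argument the paper intends: the corollary is stated without proof because it follows from Theorem~\ref{Propositionunramnewforms}(2) in precisely the way Proposition~\ref{depthzerowhittakernewform}'s second integral expression follows from Theorem~\ref{depthzerocoset}(3), and you have spelled this out faithfully. The character bookkeeping, the identification of $\W_{\pi,\mathrm{PS},\psi^{\mathbf{t}_m}}$ with the image of $\mathcal{J}$ under the Whittaker map, and the final nonvanishing check via the pro-$p$ volume argument from the end of the proof of Theorem~\ref{Propositionunramnewforms} are all as the paper would have them.
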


\bibliographystyle{alpha}
\bibliography{newforms}
\end{document}